\documentclass[12pt,a4paper,reqno]{amsart}

\usepackage{amsmath,amsthm}
\usepackage{amssymb}
\usepackage{mathtools}
\numberwithin{equation}{section}

\usepackage{latexsym}
\usepackage{hyperref}

\usepackage{cite}
\usepackage{enumitem}

\usepackage[margin=2.5cm]{geometry}
\usepackage{eucal}

\usepackage{tikz-cd}
\usepackage{tikz-3dplot}
\usepackage{pgfplots}
\pgfplotsset{compat=1.18}
\usetikzlibrary{positioning}

\usepackage{todonotes}

\theoremstyle{plain}
\newtheorem{Theorem}{Theorem}[section]
\newtheorem{Proposition}[Theorem]{Proposition}
\newtheorem{Lemma}[Theorem]{Lemma}
\newtheorem{Corollary}[Theorem]{Corollary}

\theoremstyle{definition}
\newtheorem{Remark}[Theorem]{Remark}
\newtheorem{Definition}[Theorem]{Definition}
\newtheorem{Example}[Theorem]{Example}
\newtheorem{Terminology}[Theorem]{Terminology}
\newtheorem{Notation}[Theorem]{Notation}
\newtheorem{Warning}[Theorem]{Warning}
\theoremstyle{remark}

\DeclareMathOperator{\Der}{Der}

\DeclareMathOperator{\Hom}{Hom}
\DeclareMathOperator{\End}{End}
\DeclareMathOperator{\Aut}{Aut}

\DeclareMathOperator{\Span}{Span}

\DeclareMathOperator{\Empty}{{\_\!\_\,}}
\DeclareMathOperator{\Comma}{\downarrow}
\DeclareMathOperator*{\Lim}{lim}

\newcommand{\bbA}{\mathbb{A}}

\newcommand{\bbR}{\mathbb{R}}
\newcommand{\bbZ}{\mathbb{Z}}

\newcommand{\calA}{\mathcal{A}}
\newcommand{\calB}{\mathcal{B}}
\newcommand{\calC}{\mathcal{C}}

\newcommand{\calI}{\mathcal{I}}

\newcommand{\calM}{\mathcal{M}}
\newcommand{\calN}{\mathcal{N}}
\newcommand{\calO}{\mathcal{O}}
\newcommand{\calT}{\mathcal{T}}
\newcommand{\calTcone}{\mathcal{T}^\triangleright}

\newcommand{\calX}{\mathcal{X}}

\newcommand{\catB}{{\mathcal{B}}}
\newcommand{\catC}{{\mathcal{C}}}

\newcommand{\frakg}{\mathfrak{g}}
\newcommand{\op}{{\mathrm{op}}}
\newcommand{\id}{\mathrm{id}}
\newcommand{\Id}{\mathrm{1}}
\newcommand{\pr}{\mathrm{pr}}

\newcommand{\Ad}{\mathrm{Ad}}
\newcommand{\Lie}{\mathcal{L}}

\newcommand{\Val}{\eta}
\newcommand{\Rmult}{\hat{m}}
\newcommand{\Radd}{\hat{+}}
\newcommand{\Runit}{\hat{1}}
\newcommand{\Rzero}{\hat{0}}
\newcommand{\Rminus}{\hat{-}}

\newcommand{\Aadd}{\hat{+}}
\newcommand{\Azero}{\hat{0}}
\newcommand{\Aminus}{\hat{-}}
\newcommand{\Amod}{\hat{\kappa}}

\newcommand{\Tpart}[1]{T_{(#1)}}
\newcommand{\Term}{!}
\newcommand{\bT}{{{}^b T}}
\newcommand{\bcalX}{{{}^b \calX}}
\newcommand{\bMfld}{{{}^b \Mfld}}

\newcommand{\Set}{{\mathcal{S}\mathrm{et}}}
\newcommand{\Cat}{{\mathcal{C}\mathrm{at}}}
\DeclareMathOperator{\intCat}{\underline{\Cat}}
\newcommand{\Eucl}{{\mathcal{E}\mathrm{ucl}}}

\renewcommand{\phi}{\varphi}
\renewcommand{\epsilon}{\varepsilon}

\mathcode`\:=\string"603A

\newcommand{\Mfld}{{\mathcal{M}\mathrm{fld}}}
\newcommand{\Mod}{{\mathcal{M}\mathrm{od}}}
\newcommand{\Aff}{{\mathcal{A}\mathrm{ff}}}

\newcommand{\CAlg}{{\mathrm{C}\mathcal{A}\mathrm{lg}}}
\newcommand{\CinftyRing}{{\mathrm{C^\infty}\mathcal{R}\mathrm{ing}}}

\DeclareMathOperator{\Sym}{Sym}
\newcommand{\SymGr}{S}
\DeclareMathOperator{\Spec}{Spec}
\DeclareMathOperator{\Pro}{Pro}
\DeclareMathOperator{\Ind}{Ind}

\begin{document}

\title{Cartan calculus in tangent categories}

\author[L.~Aintablian]{Lory Aintablian}
\address{Max-Planck-Institut f\"ur Mathematik, Vivatsgasse 7, 53111 Bonn, Germany}
\email{lory.aintablian@gmail.com}

\author[C.~Blohmann]{Christian Blohmann}
\address{Max-Planck-Institut f\"ur Mathematik, Vivatsgasse 7, 53111 Bonn, Germany}
\email{blohmann@mpim-bonn.mpg.de}

\subjclass[2020]{18F40 (58A50, 53D17, 58A40)}

\date{\today}

\keywords{Tangent category, Cartan calculus, abstract Lie algebroid, Lie-Rinehart algebra, Lie-Rinehart forms, vector field, scalar multiplication}

\begin{abstract}
We determine the structure needed in a tangent category in the sense of Rosick\'y and Cockett-Cruttwell to construct the Cartan calculus on all objects. The missing ingredient is a scalar multiplication by a commutative ring object $R$, playing the role of the smooth real line, which equips the tangent bundle of every object with the structure of an $R$-module compatible with the tangent structure. We show that under these axioms the Lie algebra of vector fields acts by derivations on the ring of $R$-valued functions and satisfies the Leibniz rule. In other words, the tangent bundle is an abstract Lie algebroid, so that the Lie algebra of vector fields is a Lie-Rinehart algebra over the ring of functions. Consequently, every object carries a Cartan calculus of Lie-Rinehart forms, given by the Chevalley-Eilenberg complex together with its differential, inner derivative, and Lie derivative. Examples include the tangent categories of smooth manifolds, $G$-manifolds, Lie groupoids, log manifolds, pro-manifolds, elastic diffeological spaces, affine and general schemes, graded manifolds, and affine $C^\infty$-schemes.
\end{abstract}
\maketitle

\setcounter{tocdepth}{1}
\tableofcontents

\section{Introduction}

A considerable part of the local computations in differential geometry is carried out in terms of the Lie algebra of vector fields, the graded commutative algebra of differential forms, the de Rham differential, the inner derivative, and the Lie derivative. On a smooth manifold, these structures together with their algebraic relations form the \emph{Cartan calculus} (Definition~\ref{def:CartanCalc}).

The purpose of this paper is to determine the structure needed in a tangent category to construct the Cartan calculus on all objects. A tangent category, in the sense of Rosick\'y~\cite{Rosicky:1984} and further developed by Cockett and Cruttwell~\cite{CockettCruttwell:2014}, axiomatizes the tangent functor and the natural transformations that are needed to abstractly define the Lie bracket of vector fields. What was missing is a scalar multiplication by a ring object $R$ in the category satisfying the appropriate axioms to allow for the construction of the Cartan calculus.

What are these axioms? It is rather obvious that the tangent bundle $TX \to X$ should be equipped with the structure of an $R$-module in the slice category over $X$. Rosick\'y remarks as much in his original paper~\cite{Rosicky:1984}. It is also clear that we have to require that $TR \cong R \times R$ as $R$-modules, so that we have a projection onto the fiber of $TR \to R$, which is needed for the definition of the differential of a function. Similarly, we should require the vertical lift to be $R$-linear (cf.~\cite{CruttwellLemay:2023}). It is not clear, however, what further compatibility conditions are needed to ensure that vector fields act by derivations on the ring of $R$-valued functions and that the Lie bracket satisfies the Leibniz rule.

The missing axioms for the $R$-module structure $\kappa_X: R \times TX \to TX$ are given by the commutative diagrams~\eqref{diag:ScalarMult2} and \eqref{diag:ScalarMult3}. They relate the tangent map $T\kappa_X$ with $\kappa_X$, the vertical lift, and the symmetric structure on $T^2 X$. This culminates in the definition of a cartesian tangent category with \emph{scalar $R$-multiplication} (Definition~\ref{def:RScalar}). 

We show that the Lie algebra of vector fields $\calX \coloneqq \Gamma(X, TX)$ on an object $X$ acts by derivations on the ring of $R$-valued functions $\calO \coloneqq \Hom(X, R)$ (Proposition~\ref{prop:VecActFunc}). Then we show that the Lie bracket satisfies the Leibniz rule (Proposition~\ref{prop:LeibnizRule}). This is summarized in Theorem~\ref{thm:TangentLieAlgd}, which states that $TX \to X$ is an abstract Lie algebroid in the sense of \cite[Def.~7.1]{AintablianBlohmann:2025}, the \emph{tangent Lie algebroid} of $X$. It follows from Proposition~\ref{prop:LieAlgdRinehart} that $\calX$ is a Lie-Rinehart algebra over $\calO$ (Definition~\ref{def:LieRinehartAlg}). It is well-known (Proposition~\ref{prop:CartanCalcLieRine}) that from a Lie-Rinehart algebra we can construct a Cartan calculus, given by the graded commutative $\calO$-algebra of \emph{Lie-Rinehart forms} 
\begin{equation}
\label{eq:LieRinehartFormsIntro}
  \Omega^\bullet(X) = \Hom_\calO(\wedge^\bullet_\calO \calX, \calO)
\end{equation}
together with the Chevalley-Eilenberg differential, the inner derivative, and the Lie derivative (Theorem~\ref{thm:CartanOnX}). This is the \emph{Cartan calculus} of $X$.

The concept of scalar multiplication of Definition~\ref{def:RScalar} and the results of this paper apply to a wide range of examples (Section~\ref{sec:Examples}), including the Cartan calculus of
\begin{enumerate}

\item smooth vector fields and de Rham forms on a smooth manifold,

\item $G$-equivariant vector fields and forms on a $G$-manifold,

\item multiplicative vector fields and multiplicative forms on a Lie groupoid,

\item log vector fields and log forms on a log manifold,

\item pro-vector fields and ind-forms on a pro-manifold such as the infinite jet manifold,

\item diffeological vector fields and forms on an elastic diffeological space, as they are used in Lagrangian Field Theory,

\item derivations and Lie-Rinehart forms on affine and general schemes,

\item degree zero vector fields and forms on a graded manifold,

\item $C^\infty$-derivations and forms on an affine $C^\infty$-scheme.

\end{enumerate}
The list is not exhaustive and can easily be extended.

\subsection{Future directions}

In several of the examples listed above, the Lie-Rinehart forms as defined in~\eqref{eq:LieRinehartFormsIntro} are not the kind of forms that are usually considered. This is the case for non-smooth schemes and $C^\infty$-schemes, where the Lie-Rinehart forms are not the usual K\"ahler forms, and on elastic diffeological spaces, where the Lie-Rinehart forms are generally not the usual diffeological de Rham forms. It turns out that there is a smaller Cartan calculus given by vector fields and cubical forms (called singular forms in \cite{CruttwellLucyshynWright:2018}), which are the expected forms in these examples. This is the subject of a forthcoming paper. In the case of graded manifolds, there is a different issue: The usual graded Cartan calculus is given by \emph{graded} vector fields in all degrees and forms with the total degree given by the sum of the form degree and the additional grading. To obtain this we have to enrich the tangent structure over graded vector spaces, which is a topic for future research. Finally, we conjecture that the concept of scalar multiplication can be straightforwardly generalized to higher tangent categories, in the sense of \cite{BauerBurkeChing:2021}. The motivating example is the 2-category of differentiable stacks (Section~\ref{sec:DifferentiableStacks}).

\subsection{Related work}

Tangent categories were introduced by Rosick\'y~\cite{Rosicky:1984} and later rediscovered and substantially developed by Cockett and Cruttwell~\cite{CockettCruttwell:2014}, who broadened the framework by replacing abelian groups with commutative monoids. They are closely tied to the differential categories arising from the semantics of differential linear logic. The Cartesian differential categories of Blute, Cockett, and Seely~\cite{BluteCockettSeely:2009} axiomatize the directional derivative of multivariable calculus and coincide with the differential objects of a tangent category, while their integral counterpart was studied by Cockett and Lemay~\cite{CockettLemay:2019}. Cockett, Lemay, and Lucyshyn-Wright~\cite{CockettLemayLucyshynWright:2020} showed that the co-Eilenberg-Moore category of a differential category is a tangent category; in particular, this exhibits the opposite of the category of commutative rings---the basic setting of algebraic geometry---as a tangent category. We build on this result to show that the category of affine schemes is a tangent category with scalar multiplication by the affine line (Section~\ref{sec:AffSchemes}).

Two structural results clarify the meaning of the axioms of a tangent category. Leung~\cite{Leung:2017} classified tangent structures as strong monoidal functors out of a category of Weil algebras, making precise the connection with synthetic differential geometry; building on this, Garner~\cite{Garner:2018} proved that every tangent category embeds into a representable one, in which the tangent functor is given by exponentiation by an infinitesimal object.

A substantial body of work develops differential geometry synthetically within an arbitrary tangent category. This includes vector fields and their Lie bracket, whose Jacobi identity was established by Cockett and Cruttwell~\cite{CockettCruttwell:2015}; differential bundles and fibrations~\cite{CockettCruttwell:2018}, the tangent-categorical analogue of vector bundles, further characterized by MacAdam~\cite{MacAdam:2021} and by Ching~\cite{Ching:2024}; connections, together with curvature, torsion, and the Bianchi identities~\cite{CockettCruttwellConnections:2017}; solutions of differential equations and their flows~\cite{CockettCruttwellLemay:2021}; and differential and sector forms, from which Cruttwell and Lucyshyn-Wright~\cite{CruttwellLucyshynWright:2018} extracted two notions of cohomology.

The framework of tangent categories accommodates a broad range of examples beyond smooth manifolds. Cruttwell and Lemay~\cite{CruttwellLemay:2023} analyzed differential bundles in commutative algebra and algebraic geometry, identifying them with modules and quasi-coherent sheaves; Ikonicoff, Lanfranchi, and Lemay~\cite{IkonicoffLanfranchiLemay:2023} exhibited categories of algebras over an operad as tangent categories, and Lanfranchi studied the resulting operadic geometry~\cite{Lanfranchi:2023b} as well as a Grothendieck construction for tangent fibrations~\cite{Lanfranchi:2023a}. Vooys~\cite{Vooys:2023} showed that ind-categories of tangent categories are again tangent categories. In the higher-categorical direction, Bauer, Burke, and Ching~\cite{BauerBurkeChing:2021} extended the theory to $\infty$-categories, making precise the analogy between Goodwillie's calculus of functors and the differential calculus of manifolds.

Most closely related to the present paper are developments concerning Lie theory in tangent categories. Burke and MacAdam~\cite{BurkeMacAdam:2019} introduced involution algebroids as a tangent-categorical generalization of Lie algebroids. In~\cite{AintablianBlohmann:2025} we defined differentiable groupoid objects and their abstract Lie algebroids, the notion we use here. While vector fields and forms have each been considered separately, the full algebraic structure of the Cartan calculus has, to the best of our knowledge, not previously been constructed in the framework of tangent categories.

\subsection{Outline of the paper}

Section~\ref{sec:AbstractTangent} recalls the categorical framework of abstract tangent structures and Rosick\'{y}'s axioms. In Section~\ref{sec:CartesianTangent}, we show that the definition of cartesian tangent categories can be reduced to the simple requirement that the map $T(X \times Y) \to TX \times TY$ is a natural isomorphism (Proposition~\ref{prop:CartTanPreserve}). In Section~\ref{sec:ScalarMult} we introduce the scalar $R$-multiplication with its axioms, our main new ingredient. In Section~\ref{sec:VecActFunc} we prove that vector fields act on scalar functions and that their Lie bracket obeys the Leibniz rule. Section~\ref{sec:Cartan} assembles these results into the Cartan calculus, relating abstract Lie algebroids, Lie-Rinehart algebras, and Cartan calculi, and establishing the main theorems. The paper closes in Section~\ref{sec:Examples} with the examples listed above.

\subsection{Notation}

We follow \cite{Rosicky:1984} for the notation for the composition of functors and natural transformations. The composition of functors $G: \calA \to \calB$ and $F: \calB \to \calC$ will be denoted by juxtaposition $FG: \calA \to \calC$. Proceeding iteratively, the $n$-fold composition of an endofunctor $F: \calC \to \calC$ with itself is also denoted by $F^n \coloneqq F \ldots F$.

The horizontal composition of natural transformations $\alpha: F \to F'$ and $\beta: G \to G'$ will also be denoted by juxtaposition $\alpha\beta: FG \to F'G'$. Its components are
\begin{equation*}
  (\alpha \beta)_A
  = \alpha_{G'A} \circ F(\beta_A)
  = F'(\beta_A) \circ  \alpha_{GA}
  \,.
\end{equation*}
We will use the notation $F$ for the identity natural transformation $\id_F: F \to F$. We have the relations
\begin{align*}
  (F\beta)_A &= F(\beta_A)
  \\
  (\alpha G)_A &= \alpha_{GA}
  \,.
\end{align*}
The vertical composition of $\alpha$ with a natural transformation $\alpha': F' \to F''$ will be denoted by $\alpha' \circ \alpha: F \to F''$. Its components are given by $(\alpha' \circ \alpha)_B = \alpha'_B \circ \alpha_B$ for all $B \in \catB$. The strict monoidal category of endofunctors on $\calC$ will be denoted by $\End(\calC)$ and the identity endofunctor by $\Id: \calC \to \calC$.

\subsection*{Acknowledgements}

We thank David Aretz, Dennis Borisov, Michael Ching, Geoffrey Cruttwell, Madeleine Jotz, Benjamin MacAdam, David Mi\-ya\-mo\-to, Leonid Ryvkin, and Tashi Walde for helpful comments and fruitful discussions. L.A.~was funded by the Hausdorff Center for Mathematics in Bonn.

\section{Abstract tangent structures}
\label{sec:AbstractTangent}

\subsection{Bundles with algebraic structure}
\label{sec:BunAbelGp}

In category theory and related areas of mathematics, the most basic meaning of ``bundle'' is a morphism $A \to X$. The morphism is often required to have additional good properties and the fibers are equipped with additional structure. This generalizes various kinds of fiber bundles in geometry. The algebraic structures we will consider are given by finite product algebraic theories, which we recall.

\begin{Definition}
\label{def:TAlg}
Let $\calT$ be a small category with finite products (including a terminal object as empty product). A \textbf{$\calT$-algebra} in a category $\calC$ is a functor $\calT \to \calC$ that preserves finite products. A \textbf{morphism of $\calT$-algebras} is a natural transformation that preserves finite products. 
\end{Definition}

Examples of algebraic structures that are $\calT$-algebras in the sense of Definition~\ref{def:TAlg} are semigroups, monoids, commutative monoids, groups, abelian groups, semirings, rigs, rings, $R$-modules, $k$-vector spaces, $R$-algebras, and $C^\infty$-rings. When $\calT$ is the theory of, say, abelian groups, a $\calT$-algebra in $\calC$ is called an abelian group \textbf{internal to} $\calC$ or \textbf{in} $\calC$ for short.

A $\calT$-algebra $A: \calT \to \calC_{/X}$ in the slice category over $X \in \calC$ will be called a \textbf{bundle of $\calT$-algebras over $X$}. If the pullback $A_x \coloneqq * \times_X A: \calT \to \calC$ over a point $x: * \to X$ exists in $\calC$, then $A_x$ is a $\calT$-algebra in $\calC$, called the \textbf{fiber over $x$}.

When $\calT$ is the theory of $k$-vector spaces, a bundle of $\calT$-algebras is called a bundle of $k$-vector spaces. This definition makes no assumptions about local trivializations whatsoever, so a bundle of vector spaces is a priori not a vector bundle in the usual sense. Therefore, the following fact is somewhat surprising and very useful.

\begin{Proposition}[Example~2.7 in \cite{AintablianBlohmann:2025}]
Bundles of $\bbR$-vector spaces in the category of smooth finite-dimensional manifolds are vector bundles in the usual sense (with local trivializations).
\end{Proposition}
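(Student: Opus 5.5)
The plan is to reduce local triviality to the constant rank theorem, using only the smooth structure maps of the bundle of vector spaces. Let $p: E \to X$ be a bundle of $\bbR$-vector spaces in the category of finite-dimensional manifolds. The structure consists of smooth maps over $X$:
\begin{itemize}
\item addition $E \times_X E \to E$, where the fiber product is assumed to exist as a manifold because the algebra preserves products in $\calC_{/X}$;
\item a zero section $0: X \to E$;
\item scalar multiplication maps, either $\bbR \times E \to E$, or one map $\lambda: E \to E$ for each $\lambda \in \bbR$, depending on how the theory of $\bbR$-vector spaces is presented.
\end{itemize}
Since a smooth manifold has a product with the terminal object, the product $E \times_X E$ exists in $\Mfld_{/X}$. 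The first step is to show that $p$ is a submersion. The zero section is a smooth section of $p$, so $p$ is a submersion along the image of $0$. The translation maps $v \mapsto v + w(p(v))$, defined using any local section $w$, are diffeomorphisms over $X$. I will use them to transport the submersion property, but this needs local sections through every point, which is circular. So instead I first show that $p$ is a submersion near $0(X)$, and then use scaling to spread this to all of $E$.

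For the scaling step, suppose scalar multiplication is smooth in the scalar, as it is if the presentation is by a map $\bbR \times E \to E$. Then $\mu: \bbR \times E \to E$ with $\mu(0,v) = 0(p(v))$ gives a smooth homotopy contracting $E$ onto the zero section, fiberwise. For $t \neq 0$ the map $\mu_t$ is a diffeomorphism of $E$ over $X$. Submersivity at $\mu_t(v)$ for small $t$, which holds because this point lies near $0(X)$, then transfers to $v$, since $p \circ \mu_t = p$.

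If only discrete scalars $\lambda \in \bbR$ are available, I will instead use the fact that each fiber $E_x = p^{-1}(x)$ is a Lie group under addition. Here I need $E_x$ to be a manifold, which follows once $p$ is a submersion at each point. Alternatively, I can compare $E \times_X E$ with $E$ via the shear map $(v,w) \mapsto (v, v+w)$.

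Once $p$ is a submersion, each fiber $E_x$ is a finite-dimensional abelian Lie group with smooth $\bbR$-scalar action. Hence it is a finite-dimensional vector space with its standard smooth structure. Its dimension is locally constant, since it equals $\dim E - \dim X$ on each component.

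For local trivializations, around a point $x_0$ I pick a basis $e_1,\dots,e_k$ of $E_{x_0}$. Using submersivity, I extend these to local smooth sections $s_i$ of $p$ near $x_0$. Then I define
\[
\Phi: U \times \bbR^k \to E|_U, \qquad \Phi(x,a) = \textstyle\sum_i a_i s_i(x),
\]
which is smooth because addition and scalar multiplication are smooth. The map $\Phi$ is fiberwise linear and bijective at $x_0$. Its differential at $(x_0,0)$ is an isomorphism, by a dimension count plus injectivity on the vertical part. Hence $\Phi$ is a local diffeomorphism near $U \times \{0\}$. Fiberwise linearity, together with the sections staying linearly independent on a smaller neighbourhood, then upgrades this to a diffeomorphism onto $E|_U$.

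\textbf{Main obstacle.} I expect the hardest step to be proving that $p$ is a submersion everywhere using only smoothness of the structure maps. This is where the smooth dependence of scalar multiplication on $\lambda$ is essential. Without it, discontinuous-in-$\lambda$ pathologies and non-constant fiber dimension would have to be ruled out, which I would try to do via the Lie group argument on fibers.
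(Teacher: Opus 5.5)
For the reading in which the scalar multiplication is a smooth map $\kappa: \bbR \times E \to E$, your argument is correct. The paper gives no proof of this statement; it imports it from Example~2.7 of \cite{AintablianBlohmann:2025}. Yours therefore stands as a self-contained proof:
\begin{enumerate}
\item the zero section makes $p$ a submersion near $0(X)$;
\item the maps $\kappa(t,\cdot)$, $t \neq 0$, are fiber-preserving diffeomorphisms with $\kappa(t,v) \to 0(p(v))$ as $t \to 0$, so they transport submersivity to all of $E$;
\item the inverse function theorem for $\Phi(x,a) = \sum_i a_i s_i(x)$, together with $\Phi(x,ta) = \kappa(t,\Phi(x,a))$, makes $\Phi$ a bijective local diffeomorphism.
\end{enumerate}
A shorter route in the literature is the theorem of Grabowski and Rotkiewicz: every smooth action of the multiplicative monoid $(\bbR,\cdot)$ on a manifold is the scalar multiplication of a unique vector bundle structure over the image of $\kappa(0,\cdot)$. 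The given addition, being smooth and equivariant, is then automatically the bundle addition. That route proves more, since the scalar multiplication alone determines the bundle. Yours needs only the inverse function theorem and basic Lie theory.

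Two steps deserve an extra line each. First, the fiber product $E \times_X E$ exists as part of the data, but it is the usual embedded fiber product only once $p$ is known to be a submersion. That identification is what lets you restrict the addition smoothly to $E_x$. Second, the claim that $E_x$ is a vector space with its standard smooth structure uses smoothness in $\lambda$ again. $E_x$ is connected via $t \mapsto \kappa(t,v)$ and torsion-free, hence $E_x \cong \bbR^a$ as a Lie group. Each $\lambda$ then acts by a continuous additive, hence ordinarily linear, map $L_\lambda$ with $L_q = q$ for $q \in \mathbb{Q}$. Continuity in $\lambda$ forces $L_\lambda = \lambda$.

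You should drop the fallback for discrete scalars, i.e.\ one smooth map per $\lambda \in \bbR$, as in the literal Lawvere theory of $\bbR$-vector spaces. There the statement is false, so no argument on the fibers can rescue it. Here is a counterexample over a point:
\begin{enumerate}
\item Take an automorphism $\sigma$ of $\mathbb{C}$ other than the identity and complex conjugation; one exists by the axiom of choice. Then $\sigma(\bbR) \not\subset \bbR$, because the only field endomorphism of $\bbR$ is the identity.
\item Let $\lambda \in \bbR$ act on $\bbR^2 \cong \mathbb{C}$ by $z \mapsto \sigma(\lambda) z$, with the usual addition.
\item Every structure map is real-linear, hence smooth. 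The vector space axioms hold because $\sigma|_{\bbR}$ is a ring homomorphism. So this is a bundle of $\bbR$-vector spaces over a point.
\item An isomorphism to a standard $\bbR^k$ would be an additive diffeomorphism, hence linear in the ordinary sense. That would force $\sigma(\lambda) = \lambda$ for all $\lambda \in \bbR$, which is false.
\end{enumerate}
This is exactly where the step $L_\lambda = \lambda$ above breaks. The proposition must therefore be read as it is used in the paper: for bundles of modules over the ring object $\bbR$, with $\kappa$ smooth. Your write-up should commit to that reading rather than present the discrete case as a technical obstacle.
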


We will consider morphisms between bundles of $\calT$-algebras with different base objects, so we do not always want to fix the base object. Instead, we incorporate the bundle structure in the algebraic theory of bundles of $\calT$-algebras as follows. 

\begin{Notation}
The \textbf{right cone} of $\calT$ will be denoted by $\calTcone$ \cite[Notation~1.2.8.4]{Lurie:2009}. The tip of the cone will be denoted by $*$, since it is the terminal object in $\calTcone$.
\end{Notation}

%

\begin{Definition}
\label{def:BundleOfTAlgs}
A \textbf{bundle of $\calT$-algebras} in the category $\calC$ is a functor $A: \calTcone \to \calC$ that preserves the limits of the diagrams $t \to * \leftarrow t'$ for all $t, t' \in \calT$. A morphism of bundles of $\calT$-algebras is a natural transformation that intertwines those limits.
\end{Definition}

Let us spell out Definition~\ref{def:BundleOfTAlgs} in more detail. Let $A: \calTcone \to \calC$ be a functor. Let us denote the image of an arrow $t \to *$ with $t \in \calT$ by
\begin{equation*}
  A(t \to *) = (A_t \xrightarrow{p_t} X)
  \,.
\end{equation*}
Since $*$ is the terminal object, $\Lim(t \rightarrow * \leftarrow t') \cong t \times t'$, so there is a natural morphism
\begin{equation}
\label{eq:ProdToFibProd}
  A_{t \times t'} \longrightarrow
  A_t \times_X^{p_t, p_{t'}}\! A_{t'}
  \,.
\end{equation}
$A$ is a bundle of $\calT$-algebras if and only if~\eqref{eq:ProdToFibProd} is an isomorphism for all $t, t' \in \calT$. Let
\begin{align*}
  B: \calTcone 
  &\longrightarrow \calC
  \\
  (t \to *) 
  &\longmapsto (B_t \xrightarrow{q_t} Y)   
\end{align*}
be another bundle of $\calT$-algebras. A natural transformation $\phi: A \to B$ is a morphism of bundles of $\calT$-algebras if and only if the square
\begin{equation*}
\begin{tikzcd}[column sep=3.5em]
A_{t \times t'}
\ar[r, "\phi_{t\times t'}"]
\ar[d]
&
B_{t \times t'}
\ar[d]
\\
A_t \times_X^{p_t, p_{t'}}\! A_{t'}
\ar[r, "\phi_t \times_{\phi_*} \phi_{t'}"']
&
B_t \times_Y^{q_t, q_{t'}}\! B_{t'}
\end{tikzcd}
\end{equation*}
commutes for all $t, t' \in \calT$.

\begin{Example}
\label{ex:BundNoAlg}
Let $\calT = *$ be the terminal category. A $\calT$-algebra in $\calC$ is an object $* \to \calC$ without further structure. The right cone is $\calTcone \cong [1]$, the category with two objects and one non-identity morphism. A bundle of $\calT$-algebras is an arrow $A \to X$ in $\calC$. A morphism of bundles is a commutative square:
\begin{equation}
\label{eq:MorphBund1}
\begin{tikzcd}
A \ar[r, "\phi"] \ar[d, "p"'] & A' \ar[d, "p'"]
\\
X \ar[r, "f"'] & X' 
\end{tikzcd}    
\end{equation}
The category of bundles of $\calT$-algebras is the functor category $\calC^{[1]} \equiv \intCat([1], \calC)$.
\end{Example}

\begin{Example}
\label{ex:BundOfAb}
A bundle of abelian groups consists of a morphism $p: A \to X$ together with the morphisms
\begin{equation*}
\begin{tikzcd}[column sep={tiny}]
A \times_X A \ar[rr, "+"] \ar[dr, "p \, \circ \, \pr_1 = p \, \circ \, \pr_2"'] & & A \ar[dl, "p"] \\
& X &
\end{tikzcd}
\qquad
\begin{tikzcd}[column sep={tiny}]
X \ar[rr, "0"] \ar[dr, "\id_X"'] & & A \ar[dl, "p"] \\
& X &
\end{tikzcd}
\qquad
\begin{tikzcd}[column sep={tiny}]
A \ar[rr, "i"] \ar[dr, "p"'] & & A \ar[dl, "p"] \\
& X &
\end{tikzcd}
\end{equation*}
of the addition, the zero, and the inverse, satisfying the usual axioms of an abelian group.

Let $p': A' \to X'$ be another bundle of abelian groups with structure morphisms $(+', 0', i')$. A morphism $A \to A'$ is a commutative square~\eqref{eq:MorphBund1} such that the diagram
\begin{equation}
\label{eq:MorphBund3}
\begin{tikzcd}
A \times_X A \ar[r, "\phi \times_f \phi"] 
\ar[d, "+"'] & A' \times_{X'} A' \ar[d, "+'"]
\\
A \ar[r, "\phi"'] & A'
\end{tikzcd}    
\end{equation}
commutes. As for ordinary groups, it is implied that $(\phi,f)$ intertwines the units, $\phi\, \circ\, 0 = 0' \circ f$. Composing this equation with $p'$ on the left, we obtain $p' \circ \phi \circ 0 = f$, which shows that $f$ is uniquely determined by $\phi$. Therefore, we can denote a morphism~\eqref{eq:MorphBund1} of bundles of abelian groups by $\phi$ alone.    
\end{Example}

\begin{Example}
\label{ex:BundlesTalgExamples1}
Let $\calT$ be a small category with finite products; let $\calC$ be a category with finite limits; let $A, B: \calTcone \to \calC$ be bundles of $\calT$-algebras, denoted by $A(t \to *) = (A_t \xrightarrow{p_t} X)$, $B(t \to *) = (B_t \xrightarrow{q_t} Y)$.
\begin{itemize}

\item[(i)] The constant functor $X: \calTcone \to \calC_{/X}$, $(t \to *) \mapsto (X \xrightarrow{\id} X)$ is trivially a bundle of $\calT$-algebras.

\item[(ii)] Let $* \in \calC$ be a terminal object. Since $\calC_{/*} \cong \calC$, every bundle of $\calT$-algebras over $*$ can be viewed equivalently as a $\calT$-algebra.

\item[(iii)] Assume that the bundles $A$ and $B$ have the same base $X = Y$. Then the functor $A \times_X B: \calTcone \to \calC$ defined by
\begin{equation*}
  (A \times_X B)(t \to *)
  \coloneqq
  (A_t \times_X^{p_t, q_t} B_t \to X)
\end{equation*}
is a bundle of $\calT$-algebras over $X$, called the \textbf{fiber product} of $A$ and $B$.

\item[(iv)] The functor $A\times B: \calTcone \to \calC$ defined by
\begin{equation*}
  (A \times B)(t \to *)
  \coloneqq
  (A_t \times B_t \xrightarrow{p_t \times q_t} X \times Y)
\end{equation*}
is a bundle of $\calT$-algebras over $X \times Y$, called the \textbf{external product} of $A$ and $B$.

\item[(v)] As a special case of Example~(iv), we can consider $A = X$ from Example~(i) and $B: \calT \to \calC \cong \calC_{/*}$ a $\calT$-algebra in $\calC$, viewed as a bundle over the terminal object $*$ as in Example~(ii). The external product $X \times B$ is the \textbf{trivial bundle} of $\calT$-algebras over $X$ with fiber $B$.

\item[(vi)] Let $Y \to X$ be a morphism in $\calC$. The pullback functor $Y \times_X \Empty: \calC_{/X} \to \calC_{/Y}$, $(C \to X) \mapsto (Y \times_X C \to Y)$ preserves products. It follows that the composition $(Y \times_X \Empty) \circ A: \calT \to \calC_{/Y}$ is a $\calT$-algebra, which we denote by $Y \times_X A$ and call the \textbf{pullback bundle} of $\calT$-algebras.

\end{itemize}
\end{Example}

\begin{Example}
Let $R$ be a commutative ring object in the category $\calC$ with addition $\Radd$, zero $\Rzero$, multiplication $\Rmult$, and unit $\Runit$. Let $X \in \calC$ be an object. Let $X \times R$ denote the trivial bundle of rings of Example~\ref{ex:BundlesTalgExamples1}~(v). An $(X \times R)$-module in $\calC_{/X}$ will be called, for short, a \textbf{bundle of $R$-modules} over $X$. Spelled out, this is a bundle of abelian groups $p:A \to X$ together with a morphism 
\begin{equation}
\label{diag:ModStr1}
\begin{tikzcd}[column sep={tiny}]
R \times A 
\ar[rr, "\kappa"] 
\ar[dr, "p \circ \pr_2"'] &&
A \ar[dl, "p"]
\\
& X &
\end{tikzcd}
\end{equation}
satisfying the usual conditions of a linear action of a ring. The axioms are spelled out in Proposition~\ref{prop:RModuleBundle} for the case of the tangent bundle $TX \to X$ in a tangent category.

Let $p:A \to X$ and $p': A' \to X'$ be bundles of $R$-modules with $R$-actions $\kappa$ and $\kappa'$. A morphism of bundles of $R$-modules is a morphism $\phi: A \to A'$ of bundles of abelian groups such that the diagram
\begin{equation}
\label{eq:MorphBundR2}
\begin{tikzcd}[column sep=large]
R \times A \ar[r, "\id_R \times \phi"] 
\ar[d, "\kappa"'] & R \times A' \ar[d, "\kappa'"]
\\
A \ar[r, "\phi"'] & A'
\end{tikzcd}    
\end{equation}
commutes.    
\end{Example}

\subsection{Sections of bundles}

The functor of sections
\begin{equation}
\label{eq:SectionFunctor}
\begin{aligned}
  \Gamma: \calC^{[1]} 
  &\longrightarrow \Set
  \\
  (A \xrightarrow{p} X)
  &\longmapsto
  \Gamma(X,A) 
  = \{a:X \to A \ | \ p \circ a = \id_X \}  
\end{aligned}
\end{equation}
is monoidal, mapping the fiber product of bundles to the cartesian product,
\begin{equation*}
  \Gamma(X, A \times_X B) 
  \cong
  \Gamma(X,A) \times \Gamma(X,B)
  \,.
\end{equation*}
This implies that for every bundle of $\calT$-algebras $A: \calTcone \to \calC$ over $X$, the functor
\begin{align*}
  \Gamma(X,A): \calT &\longrightarrow \Set
  \\
  t &\longmapsto \Gamma(X, A_t)
\end{align*}
is a $\calT$-algebra.

\begin{Example}
Let $p: A \to X$ be a bundle of abelian groups in $\calC$ with structure morphisms $(+,0,i)$. Then $\Gamma(X,A)$ is an abelian group in the usual sense. The sum of two sections $a,b: X \to A$ is given by
\begin{equation*}
  a+b \coloneqq + \circ (a,b)
  \,.
\end{equation*}  
The zero section $0: X \to A$ is the zero of the group and $i \circ a$ is the inverse.
\end{Example}

\begin{Example}
\label{ex:PointsOfR}
Let $R$ be a ring object in $\calC$. The set of sections of the trivial bundle of rings $X \times R \to X$ is a ring, which is naturally isomorphic via
\begin{equation}
\label{eq:SectionsTrivial}
\begin{aligned}
  \Gamma(X, X \times R) &\xrightarrow{~\cong~}
  \Hom(X, R) \\
  a &\longmapsto \pr_2 \circ a
\end{aligned}
\end{equation}
to the ring $\Hom(X,R)$ of $R$-valued functions with pointwise addition and multiplication
\begin{equation*}
  f + g \coloneqq \Radd \circ (f,g)
  \,,\quad
  fg \coloneqq \Rmult \circ (f,g)
  \,.
\end{equation*}
The zero of this ring is $X \to * \stackrel{\Rzero}{\to} R$, the multiplicative unit is $X \to * \stackrel{\Runit}{\to} R$.
\end{Example}

\begin{Example}
Let $A \to X$ be a bundle of $R$-modules with $R$-action $\kappa: R \times A \to A$. Using the isomorphism \eqref{eq:SectionsTrivial}, we see that $\Gamma(X,A)$ has the structure of a $\Hom(X,R)$-module given by 
\begin{equation}
\label{eq:SectionsModuleStr}
  fa \coloneqq \kappa \circ (f,a) \,,
\end{equation}
for all $f \in \Hom(X,R)$ and $a \in \Gamma(X,A)$. Moreover, if $\phi: A \to A'$ is a morphism of bundles of $R$-modules with base map $\id_X$, then $\phi_*$ is a morphism of modules over $\Hom(X,R)$.    
\end{Example}

\subsection{Symmetric structure on an endofunctor}




\begin{Definition}
\label{def:SymStruc}
Let $F: \calC \to \calC$ be a functor and $\tau: F^2 \to F^2$ a natural transformation. Let $\tau_{12} \coloneqq \tau\,F$ and $\tau_{23}\coloneqq F\,\tau$ be the two trivial extensions of $\tau$ to natural transformations $F^3 \to F^3$. We call $\tau$ a \textbf{braiding on $F$} if it satisfies the braid relations $\tau_{12} \circ \tau_{23} \circ \tau_{12} = \tau_{23} \circ \tau_{12} \circ \tau_{23}$. A braiding $\tau$ is called a \textbf{symmetric structure on $F$} if it satisfies $\tau\circ \tau = F^2$.
\end{Definition}

\begin{Remark}
A symmetric structure $\tau$ on a functor $F:\calC \to \calC$ defines an action of the symmetric group $S_n$ on $F^n$. The action of a transposition $\tau_{i,i+1}$, $1 \leq i < n$ is given by the natural isomorphism
\begin{equation*}
  F^{i-1} \tau F^{n-i-1}: F^n \longrightarrow F^n \,.
\end{equation*}
Since the symmetric group on $n$ elements is generated by adjacent transpositions, this induces a group homomorphism $S_n \to \Aut(F^n)$, where $\Aut(F^n)$ is the group of natural isomorphisms $F^n \to F^n$.
\end{Remark}

\begin{Definition}
\label{def:PreserveFibProd}
An endofunctor $F: \calC \to \calC$ \textbf{preserves the fiber products} of a bundle $p:A \to X$ if for all $k \geq 1$ the natural morphism of bundles over $FX$,
\begin{equation}
\label{eq:MorphBundles4}
  \nu_{k,X}:
  F(A \times_X^{p,p} \ldots \times_X^{p,p} A) \longrightarrow
  FA \times_{FX}^{Fp, Fp} \ldots \times_{FX}^{Fp, Fp} FA
  \,,
\end{equation}
where both sides have the same number $k$ of factors, is an isomorphism.
\end{Definition}


\subsection{Rosick\'{y}'s axioms}
\label{sec:RosickysAxioms}

In~\cite{Rosicky:1984}, Rosick\'{y} introduced the notion of abstract tangent functors, which axiomatizes the natural categorical structure of the tangent functor of manifolds that is needed to define the Lie bracket of vector fields. 

\begin{Definition}[Sec.~2 in \cite{Rosicky:1984}, Def.~2.3 in \cite{CockettCruttwell:2014}]
\label{def:TangentStructure}
A \textbf{Rosick\'y tangent structure} on a category $\calC$ consists of a functor $T: \calC \to \calC$ together with natural transformations $\pi: T \to \Id$, $0: \Id \to T$, $+: T_2 \to T$, $\lambda: T \to T^2$, and $\tau: T^2 \to T^2$, such that the following axioms hold:

\textbf{(i)} The \textbf{fiber products}
\begin{equation}
\label{eq:TanFun1}
  T_k \coloneqq \underbrace{T \times_\Id T \times_\Id \ldots \times_\Id T}_{k \text{ factors}}
\end{equation}
over $T \stackrel{\pi}{\to} \Id$ exist for all $k \geq 1$, are pointwise, and preserved by $T$ (Definition~\ref{def:PreserveFibProd}).

\textbf{(ii)} $T \stackrel{\pi}{\to} \Id$ with neutral element $0$ and addition $+$ is a \textbf{bundle of abelian groups} over $\Id$ (Definition~\ref{def:BundleOfTAlgs} and Example~\ref{ex:BundOfAb}).

\textbf{(iii)}
$\tau: T^2 \to T^2$ is a \textbf{symmetric structure} on $T$ (Definition~\ref{def:SymStruc}) and a morphism of bundles of abelian groups, that is,
\begin{equation}
\label{eq:TanFun2}
\begin{tikzcd}[column sep=tiny]
T^2 \ar[rr, "\tau"] \ar[dr, "T \pi"'] && T^2 \ar[dl, "\pi T"]
\\
& T &
\end{tikzcd}    
\qquad
\begin{tikzcd}[column sep=large]
T^2 \times_T^{T\pi, T\pi} T^2
\ar[r, "\tau \times_T \tau"] 
\ar[d, "\nu_2^{-1}"']
&
T_2 T
\ar[dd, "+T"] 
\\
T T_2  \ar[d, "T+"']
&
\\
T^2 \ar[r, "\tau"'] 
&
T^2
\end{tikzcd}     
\end{equation}
commutes, where $\nu_2$ is the morphism~\eqref{eq:MorphBundles4} for $A=TX \xrightarrow{\pi_X} X$, $F=T$, and $k=2$.


\textbf{(iv)} The \textbf{vertical lift} $\lambda$ is a morphism of bundles of abelian groups, that is,
\begin{equation}
\label{eq:TanFun3}
\begin{tikzcd}
T \ar[r, "\lambda"] \ar[d, "\pi"'] & T^2 \ar[d, "\pi T"]
\\
\Id \ar[r, "0"'] & T
\end{tikzcd}    
\qquad\qquad
\begin{tikzcd}
T \ar[r, "\lambda"] \ar[d, "\lambda"'] & 
T^2 \ar[d, "\lambda T"]
\\
T^2 \ar[r, "T \lambda"'] & T^3
\end{tikzcd}    
\end{equation}
commutes, and $(+T) \circ (\lambda \times_0 \lambda) = \lambda \circ +$.

\textbf{(v)} The diagrams
\begin{equation}
\label{eq:TanFun5}
\begin{tikzcd}[column sep=tiny]
& T \ar[dl, "\lambda"'] \ar[dr, "\lambda"] &
\\
T^2 \ar[rr, "\tau"'] & &
T^2
\end{tikzcd}    
\qquad\qquad
\begin{tikzcd}
T^2 \ar[r, "T\lambda"] \ar[d, "\tau"'] & 
T^3 \ar[r, "\tau T"] &
T^3 \ar[d, "T\tau"]
\\
T^2 \ar[rr, "\lambda T"'] & &
T^3
\end{tikzcd}    
\end{equation}
commute.

\textbf{(vi)} The diagram
\begin{equation}
\label{eq:TanFun4}
\begin{tikzcd}
T \ar[r, "\lambda"] \ar[d, "\pi"'] 
\ar[dr, phantom, "\lrcorner", very near start] 
& T^2 \ar[d, "{(\pi T, T\pi)}"]
\\
\Id \ar[r, "{(0, 0)}"'] & T_2
\end{tikzcd}    
\end{equation}
is a pointwise pullback square. 
\end{Definition}

\begin{Terminology}
\label{term:negatives}
In \cite{CockettCruttwell:2014} and subsequent work, Rosick\'{y}'s original requirement that $T \to \Id$ be a bundle of abelian groups is relaxed to a bundle of abelian monoids. The authors later refer to Rosick\'{y}'s original notion as tangent categories with negatives \cite{CockettCruttwell:2015,CockettCruttwellConnections:2017}. We will follow a more recent terminology and call these \textbf{Rosick\'y tangent categories} \cite{CruttwellLemay:2023,IkonicoffLanfranchiLemay:2023,LanfranchiLemay:2025}.
\end{Terminology}

The vertical lift can be extended by the additive bundle structure to the map
\begin{equation*}
  \lambda_2:
  T_2 \xrightarrow{~T0 \times_0 \lambda~}
  T_2 T
  \xrightarrow{~+T~}
  T^2
  \xrightarrow{~\tau~}
  T^2
  \,.
\end{equation*}
In components, 
\begin{equation}
\label{eq:VertLiftExt}
\begin{split}
  (\lambda_2)_X
  &= \tau_X \circ {+_{TX}} \circ (T0_X \times_{0_X} \lambda_X)
  \,,
\end{split}
\end{equation}
for all $X \in \calC$. It was shown in \cite[Lem.~3.10]{CockettCruttwell:2014}, assuming all other axioms of a Rosick\'y tangent structure, that Axiom~\eqref{eq:TanFun4} is satisfied if and only if
\begin{equation}
\label{diag:lambda2pullback}
\begin{tikzcd}
T_2 \ar[r, "\lambda_2"] \ar[d, "\pi \circ \pr_1"'] 
\ar[dr, phantom, "\lrcorner", very near start] 
& 
T^2 \ar[d, "{T\pi}"]
\\
\Id \ar[r, "0"'] & T
\end{tikzcd}    
\end{equation}
is a pointwise pullback. It is straightforward to show that $\lambda_2$ is linear in the second argument, that is, the diagram
\begin{equation}
\label{diag:lambda2linear}
\begin{tikzcd}[column sep=large]
T_2 X \times_{TX}^{\pr_1, \pr_1} T_2 X
\ar[r, "\cong"]
\ar[d, "(\lambda_2)_X \, \times_{TX} \, (\lambda_2)_X"']
&
T_3 X 
\ar[r, "\id_{TX} \, \times_X \, (+_X)"]
&[2em]
T_2 X
\ar[d, "(\lambda_2)_X"]
\\
T^2 X \times_{TX}^{\pi_{TX}, \pi_{TX}} T^2 X
\ar[rr, "+_{TX}"']
&&
T^2 X
\end{tikzcd}
\end{equation}
commutes for all $X \in \calC$. 


\begin{Definition}
Let $\calC$ be a category with a tangent structure. A \textbf{vector field} on $X \in \calC$ is a section of $\pi_X: TX \to X$.
\end{Definition}

The natural transformation that maps a pair of tangent vectors in the same fiber to their difference will be denoted by
\begin{equation}
\label{eq:Difference}
  -: T_2 \xrightarrow{~(T \times i)~} 
  T_2 \xrightarrow{~+~} T
  \,,
\end{equation}
where $i: T \to T$ is the additive inverse of the bundle of abelian groups. The bracket of two vector fields $v,w: X \to TX$ is defined as follows: The morphism
\begin{equation}
\label{eq:DeltaOrig}
  \delta(v,w)
  \coloneqq 
  -_{TX} \circ 
  (Tw \circ v, \tau_X \circ Tv \circ w): X \longrightarrow T^2X
\end{equation}
satisfies
\begin{equation}
\label{eq:DeltaKerOrig}
\begin{aligned}
  \pi_{TX} \circ \delta(v,w) &= w
  T\pi_X \circ \delta(v,w) &= 0_X
  \,.
\end{aligned}
\end{equation}
It then follows from the universal property of the pullback that there is a unique vector field $[v,w]$ such that
\begin{equation}
\label{eq:deltaBracketRel}
  \delta(v,w) = (\lambda_2)_X \circ \bigl(w, [v,w] \bigr)
  \,.
\end{equation}
It is straightforward to check that $[v,w]$ is additive in $v$ and $w$. It was announced in \cite{Rosicky:1984} and proved in \cite{CockettCruttwell:2015} with the input of Rosick\'y that $[v,w]$ satisfies the Jacobi identity. Observe that all ingredients of the tangent structure, including the additive inverse of $T \to \Id$, are needed for the definition of the bracket of vector fields.

\section{Cartesian tangent structures}
\label{sec:CartesianTangent}

\subsection{Cartesian tangent structures}

We recall that a functor $T: \calC \to \calC$ is said to \textbf{preserve finite products} if the natural morphism
\begin{equation}
\label{eq:chiXY}
  \chi_{X,Y} \coloneqq (T\pr_1, T\pr_2): T(X \times Y) 
  \longrightarrow TX \times TY
\end{equation}
has an inverse for all $X,Y \in \calC$.

\begin{Definition}
\label{def:CartTan}
A tangent structure on $\calC$ is called \textbf{cartesian} if the tangent functor preserves finite products.
\end{Definition}

\begin{Definition}
\label{def:MorTanCat}
Let $(\calC, T, \pi, 0, +, \tau, \lambda)$ and $(\calC', T', \pi', 0', +', \tau', \lambda')$ be tangent categories. A \textbf{strong morphism of Rosick\'y tangent categories} is a functor $F: \calC \to \calC'$ together with a natural isomorphism
\begin{equation*}
  \alpha: FT \longrightarrow T'F
\end{equation*}
that satisfies the following conditions:
\begin{itemize}

\item[(i)] $F$ preserves all finite fiber products of the tangent bundle, that is, for all $k \geq 1$, the morphism
\begin{equation*}
  \nu_k: F(T_k) \longrightarrow (FT)_k
  \,,
\end{equation*}
where
\begin{equation*}
  (FT)_k =
  FT \times_F^{F\pi,F\pi} \ldots \times_F^{F\pi,F\pi} FT
\end{equation*}
with $k$ factors, has an inverse (Def.~\ref{def:PreserveFibProd} for any functor).

\item[(ii)] $\alpha$ is a morphism of bundles of abelian groups over $F$, that is, the diagrams
\begin{equation}
\label{diag:Strong1}
\begin{tikzcd}[column sep=tiny]
FT \ar[rr, "\alpha"] \ar[dr, "F\pi"'] 
&& 
T'F \ar[dl, "\pi'F"]
\\
& F &
\end{tikzcd}
\qquad\qquad
\begin{tikzcd}[column sep=3em]
F T_2  
\ar[r, "\nu_2"]
\ar[d, "F+"'] 
&
(FT)_2
\ar[r, "\alpha \times_F \alpha"] 
&
T'_2 F \ar[d, "+'F"]
\\
FT \ar[rr, "\alpha"'] 
&& 
T' F
\end{tikzcd}    
\end{equation}
commute.

\item[(iii)] $\alpha$ intertwines the symmetric structures and the vertical lifts, that is, the diagrams
\begin{equation}
\label{diag:Strong2}
\begin{tikzcd}[column sep=3.5em]
FT^2 
\ar[r, "T' \alpha \, \circ \, \alpha T"]
\ar[d, "F \tau"']
&
T'^2 F
\ar[d, "\tau' F"]
\\
F T^2 
\ar[r, "T' \alpha \, \circ \, \alpha T"']
&
T'^2 F
\end{tikzcd}
\qquad\qquad
\begin{tikzcd}[column sep=3.5em]
FT 
\ar[r, "\alpha"]
\ar[d, "F\lambda"']
&
T'F
\ar[d, "\lambda' F"]
\\
F T^2 
\ar[r, "T' \alpha \, \circ \, \alpha T"']
&
T'^2 F
\end{tikzcd}
\end{equation}
commute.

\end{itemize}
\end{Definition}

\begin{Remark}
\label{rem:StrongComparison}
Definition~\ref{def:MorTanCat} is very similar to Definition~2.7 of \cite{CockettCruttwell:2014}, but there are a number of differences. The first, most substantial difference is that we require in (i) that $F$ preserves only the finite fiber products of the tangent bundle. In \cite{CockettCruttwell:2014} the requirement was that ``$F$ preserves the equalizers and pullbacks of the tangent structure''. It is not clear to us which equalizers and pullbacks, exactly, were meant. Moreover, in Proposition~\ref{prop:MorphTanStruc}, we show that $F$ always preserves the pullback square of the vertical lift. The second difference is that the top horizontal arrow $(\alpha \times_F \alpha) \circ \nu_2$ of the diagram involving the addition was denoted in \cite{CockettCruttwell:2014} as ``$\alpha_2 F$'', a notation that was not clear to us. We believe that the right diagram of~\eqref{diag:Strong1} clarifies what ``$\alpha_2 F$'' must have meant. The third difference is that we only give a definition of strong morphisms, since we do not need the weaker notion of morphism given in \cite{CockettCruttwell:2014}. The fourth difference is that in a Rosick\'y tangent category the tangent bundle is a bundle of abelian groups. As a consequence, the commutativity of
\begin{equation}
\label{diag:Strong3}
\begin{tikzcd}[column sep=tiny]
& F \ar[dl, "F0"'] \ar[dr, "0'F"] &
\\
FT \ar[rr, "\alpha"'] & &
T' F
\end{tikzcd}    
\end{equation}
is implied by the compatibility with the addition, so that it need not be included as an axiom. For the more general notion of tangent structure without negatives, this diagram has to be added to Definition~\ref{def:MorTanCat}.
\end{Remark}

\begin{Example}
The functor $T:\calC \to \calC$ together with the natural isomorphism given by the symmetric structure $\tau:T^2 \to T^2$ is a strong morphism from the tangent category $\calC$ to itself.
\end{Example}

\begin{Proposition}
  \label{prop:MorphTanStruc}
  Let $(\calC \xrightarrow{\: F \:} \calC', FT \xrightarrow{\: \alpha \:} T'F)$ be a strong morphism of tangent categories (Definition~\ref{def:MorTanCat}). Then the diagram
  \begin{equation*}
  \begin{tikzcd}[column sep=3em]
  FT 
  \ar[r, "F\lambda"] 
  \ar[d, "F\pi"'] 
  & 
  FT^2 \ar[d, "{F(\pi T, T\pi)}"]
  \\
  F \ar[r, "{F(0, 0)}"'] & FT_2
  \end{tikzcd}    
  \end{equation*}
  is a pointwise pullback.
\end{Proposition}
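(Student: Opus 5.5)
The strategy is to transport the pullback square \eqref{eq:TanFun4} of $\calC'$, evaluated at $FX$, back along the isomorphisms $\alpha$ and $\nu_2$. No preservation of pullbacks by $F$ is assumed, so the universal property has to come from the target tangent category rather than from $\calC$.

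Fix $X \in \calC$. Consider the composite isomorphisms
\begin{equation*}
  \alpha_X: FTX \to T'FX,\qquad
  \beta_X \coloneqq T'\alpha_X \circ \alpha_{TX}: FT^2X \to T'^2FX,\qquad
  \gamma_X \coloneqq (\alpha_X \times_{FX} \alpha_X)\circ \nu_{2,X}: FT_2X \to T'_2FX.
\end{equation*}
Here $\nu_{2,X}$ is invertible by Definition~\ref{def:MorTanCat}~(i), and $\alpha_X\times_{FX}\alpha_X$ is well defined and invertible by the left diagram of \eqref{diag:Strong1}. I will show that these isomorphisms, together with $\id_{FX}$, form an isomorphism of commutative squares from the square in the statement to the square \eqref{eq:TanFun4} for $\calC'$ at the object $FX$. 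Since the latter is a pullback, and any square isomorphic to a pullback square is a pullback, the claim then follows.

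The required commutativities are the following.
\begin{itemize}
\item[(a)] $\beta_X \circ F\lambda_X = \lambda'_{FX} \circ \alpha_X$. This is the right diagram of \eqref{diag:Strong2}.
\item[(b)] $\alpha_X \circ \id = \alpha_X$, on the left vertical edge. This says $\pi'_{FX}\circ \alpha_X = F\pi_X$, which is the left diagram of \eqref{diag:Strong1}.
\item[(c)] $\gamma_X \circ F(0_X,0_X) = (0'_{FX},0'_{FX})$. This reduces to $\alpha_X \circ F0_X = 0'_{FX}$, i.e.\ diagram \eqref{diag:Strong3}. By Remark~\ref{rem:StrongComparison}, \eqref{diag:Strong3} follows from compatibility with addition: an additive map of bundles of abelian groups preserves zero, as in Example~\ref{ex:BundOfAb}.
\item[(d)] $\gamma_X \circ F(\pi_{TX}, T\pi_X) = (\pi'_{T'FX}, T'\pi'_{FX}) \circ \beta_X$. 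I check this componentwise, using $\pr_i \circ \gamma_X = \alpha_X \circ F\pr_i$. For the first component, naturality of $\pi'$ gives $\pi'_{T'FX}\circ T'\alpha_X = \alpha_X \circ \pi'_{FTX}$, and then \eqref{diag:Strong1} applied at $TX$ gives $\pi'_{FTX}\circ \alpha_{TX} = F\pi_{TX}$. For the second component, $T'\pi'_{FX}\circ T'\alpha_X = T'F\pi_X$ by \eqref{diag:Strong1}, and naturality of $\alpha$ gives $T'F\pi_X \circ \alpha_{TX} = \alpha_X \circ FT\pi_X$.
\end{itemize}

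Before any of this, the square in the statement must itself commute. It does, because it is $F$ applied to the commutative square \eqref{eq:TanFun4}.

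The main obstacle is bookkeeping rather than a conceptual difficulty. The delicate points are identifying the component maps of $\nu_2$ and $\gamma_X$ correctly, and making sure the unit compatibility \eqref{diag:Strong3} is genuinely available. If \eqref{diag:Strong3} were not already recorded, I would derive it directly. From \eqref{diag:Strong1}, $\alpha$ commutes with addition, so $\alpha\circ F0 = \alpha\circ F(0+0) = \alpha\circ F0 + \alpha\circ F0$, where the sums are taken in the group of sections over $FX$ (morphisms $FX \to T'FX$ over $FX$). Cancelling using the inverses in the bundle of abelian groups $T'FX \to FX$ then gives $\alpha\circ F0 = 0'F$.
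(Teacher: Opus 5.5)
Your proposal is correct and is the paper's proof. The paper likewise joins the given square to the pullback square for $\lambda'$ at $FX$ by the isomorphisms $\alpha$, $T'\alpha\circ\alpha T$, $\id$, and $(\alpha\times_F\alpha)\circ\nu_2$, checks the four trapezoids exactly as in your items (a)--(d) (using \eqref{diag:Strong3} for the lower one), and concludes that the outer square is a pullback because it is isomorphic to one; the only cosmetic difference is that the paper gets commutativity of the outer square from Lemma~\ref{lem:InnerOuterSquares}, whereas you note directly that it is $F$ applied to the commutative square \eqref{eq:TanFun4}.
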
  

\begin{Lemma}
  \label{lem:InnerOuterSquares}
  Consider the following diagram:
  \begin{equation*}
  \begin{tikzcd}[column sep=0.8em, row sep=0.8em]
  A 
  \ar[rrr] 
  \ar[ddd] 
  \ar[dr, "\alpha"]
  & & &
  B \ar[ddd] 
  \ar[dl]
  \\
  &
  A' 
  \ar[d] 
  \ar[r] 
  &
  B' \ar[d]
  &
  \\
  &
  C' \ar[r] &
  D'
  &
  \\
  C \ar[rrr]
  \ar[ur] 
  & & &
  D 
  \ar[ul, "\delta"']
  \end{tikzcd}
  \end{equation*}
  Assume that the four outer trapezoids commute. If the inner square commutes and $\delta$ is a monomorphism, then the outer square commutes. Dually, if the outer square commutes and $\alpha$ is an epimorphism, then the inner square commutes.
\end{Lemma}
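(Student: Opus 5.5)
We prove the first claim by a direct diagram chase, composing with $\delta$. Label the outer square $A \to B \to D$ and $A \to C \to D$, the inner square $A' \to B' \to D'$ and $A' \to C' \to D'$, and the diagonal maps $\alpha: A \to A'$, $\beta: B \to B'$, $\gamma: C \to C'$, $\delta: D \to D'$. Commutativity of the four trapezoids gives four relations:
\begin{align*}
\beta \circ (A \to B) &= (A' \to B') \circ \alpha, &
\gamma \circ (A \to C) &= (A' \to C') \circ \alpha,
\\
\delta \circ (B \to D) &= (B' \to D') \circ \beta, &
\delta \circ (C \to D) &= (C' \to D') \circ \gamma.
\end{align*}

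Since $\delta$ is a monomorphism, it suffices to show $\delta \circ (B\to D) \circ (A \to B) = \delta \circ (C \to D)\circ (A \to C)$. For the left-hand side, the right trapezoid and then the top trapezoid give
\[
\delta \circ (B\to D)\circ(A\to B) = (B'\to D')\circ\beta\circ(A\to B) = (B'\to D')\circ(A'\to B')\circ\alpha.
\]
For the right-hand side, the bottom and left trapezoids give
\[
\delta\circ(C\to D)\circ(A\to C) = (C'\to D')\circ(A'\to C')\circ\alpha.
\]
These two expressions are equal by commutativity of the inner square, so cancelling $\delta$ yields the outer square.

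For the dual statement we run the same chain of equalities in reverse. Commutativity of the outer square gives
\[
(B'\to D')\circ(A'\to B')\circ\alpha = (C'\to D')\circ(A'\to C')\circ\alpha,
\]
since each side equals $\delta$ composed with the corresponding outer path by the trapezoid relations above. Because $\alpha$ is an epimorphism, we cancel it on the right and obtain the inner square. Alternatively, this case follows by applying the first statement in $\calC^\op$, where arrows reverse and epis become monos.

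The only point that needs care is orientation: the diagonal arrows must run in compatible directions, with $\alpha$ pointing inward from $A$ and $\delta$ pointing inward from $D$, so that each trapezoid yields an equation of the stated form. Once the arrows are read off in this way, nothing further is required.
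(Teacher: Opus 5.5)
Your diagram chase is correct: each composite of an outer path with $\delta$ reduces, via the four trapezoid identities, to the corresponding inner path precomposed with $\alpha$, so cancelling the mono $\delta$ (resp.\ the epi $\alpha$) gives both claims. This is the standard argument the paper relies on, since it gives no proof of its own and only cites Lemma~A.5 of \cite{AintablianBlohmann:2025}; note that your alternative appeal to $\calC^\op$ needs the inner and outer squares exchanged, because the diagonals point outward there, but your direct cancellation argument already proves the dual statement.
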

\begin{proof}
  See Lemma~A.5 in \cite{AintablianBlohmann:2025}.
\end{proof}

\begin{proof}[Proof of Proposition~\ref{prop:MorphTanStruc}]
Consider the following diagram:
\begin{equation*}
\begin{tikzcd}[column sep=3em]
FT 
\ar[rrr, "F\lambda"] 
\ar[ddd, "F\pi"'] 
\ar[dr, "\alpha"]
& & &
FT^2 \ar[ddd, "{F(\pi T, T\pi)}"] 
\ar[dl, "T' \alpha \, \circ \, \alpha T"']
\\
&
T'F 
\ar[d, "\pi'F"'] 
\ar[r, "\lambda'F"] 
&
T'^2 F \ar[d, "{(\pi' T'F, T'\pi' F)}"]
&
\\
&
F \ar[r, "{(0'F, 0'F)}"'] &
T'_2 F
&
\\
F \ar[rrr, "{F(0,0)}"'] 
\ar[ur, "\id"'] 
& & &
FT_2 
\ar[ul, "{(\alpha \times_F \alpha) \, \circ \, \nu_2}"]
\end{tikzcd}    
\end{equation*}
The left trapezoid is the commutative triangle at the left of~\eqref{diag:Strong1}. The upper trapezoid is the commutative square at the right of~\eqref{diag:Strong2}. The lower trapezoid commutes since 
\begin{equation*}
\begin{split}
(\alpha \times_F \alpha) \circ \nu_2 \circ F(0,0)
&=
(\alpha \times_F \alpha) \circ (F0,F0)
\\
&=
(\alpha \circ F0, \alpha \circ F0)
\\
&=
(0'F,0'F) \,,
\end{split}
\end{equation*}
where in the last step we have used Diagram~\eqref{diag:Strong3}. The right trapezoid commutes since
\begin{equation*}
\begin{split}
(\alpha \times_F \alpha) \circ \nu_2 \circ F(\pi T, T \pi)
&=
(\alpha \times_F \alpha) \circ (F\pi T, FT\pi)
\\
&=
(\alpha \circ F\pi T, \alpha \circ FT\pi)
\\
&=
(\alpha \circ \pi' FT \circ \alpha T, T'F \pi \circ \alpha T)
\\
&=
(\alpha \circ \pi' FT, T'F \pi) \circ \alpha T
\\
&=
(\pi' T' F \circ T' \alpha, T' \pi' F \circ T' \alpha) \circ \alpha T
\\
&=
(\pi' T' F, T' \pi' F) \circ T' \alpha \circ  \alpha T \,,
\end{split}
\end{equation*}
where we have used the left commutative triangle in \eqref{diag:Strong1}, the naturality of $\alpha$, the naturality of $\pi'$, and once more the commutativity of the left triangle in \eqref{diag:Strong1}. The inner square evaluated on $X \in \calC$ is the pointwise pullback~\eqref{eq:TanFun4} of the vertical lift $\lambda'$ evaluated on $FX$. It follows from Lemma~\ref{lem:InnerOuterSquares} that the outer square commutes. Moreover, since all diagonal maps are isomorphisms and the inner square is a pointwise pullback, so is the outer square.
\end{proof}

If $\calC$ and $\calC'$ are tangent categories, then $\calC \times \calC'$ has a tangent structure given by the products of the tangent functors and the natural transformations.


\begin{Proposition}
\label{prop:CartTanPreserve}
Let $\calC$ be a category with finite products and a tangent structure. The following are equivalent:
\begin{itemize}

\item[(i)] The tangent structure is cartesian.

\item[(ii)] The natural transformation $\chi_{X,Y}: T(X \times Y) \to TX \times TY$ is a natural isomorphism, such that the product functor $\calC \times \calC \to \calC$, $(X,Y) \mapsto X \times Y$ together with $\alpha \coloneqq \chi^{-1}$ is a strong morphism of tangent categories.

\end{itemize}
\end{Proposition}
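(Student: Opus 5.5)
The direction (ii)$\Rightarrow$(i) is immediate. Part of (ii) is that $\chi_{X,Y}$ is invertible, which is exactly Definition~\ref{def:CartTan}. The substance lies in (i)$\Rightarrow$(ii). Assuming $\chi$ is invertible, I set $\alpha \coloneqq \chi^{-1}: T(X\times Y) \to TX \times TY$, where the product functor $F(X,Y)=X\times Y$ is viewed as a functor on $\calC\times\calC$ with the product tangent structure $(T\times T, \pi\times\pi, \dots)$. I then check the three conditions of Definition~\ref{def:MorTanCat} for $F$. In each case it is more convenient to check the corresponding statement for $\chi$ rather than $\alpha$, since $\chi$ is given explicitly by the pairing $(T\pr_1, T\pr_2)$. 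A diagram commutes for $\alpha$ if and only if the diagram with the arrows reversed commutes for $\chi$.

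For condition (i), I need that $F$ preserves the fiber products $(T_k X \times T_k Y)\mapsto T_kX\times T_kY$ over $X\times Y$. This is purely a limit-interchange statement: the fiber product in $\calC\times\calC$ is computed componentwise, and finite products commute with pullbacks. So $(TX\times TY)\times_{X\times Y}\cdots\times_{X\times Y}(TX\times TY)\cong T_kX\times T_kY$ canonically, with $\nu_k$ being this canonical comparison. Since the fiber products $T_k$ are pointwise (Axiom~(i)), these exist, and $\nu_k$ is an isomorphism.

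For conditions (ii) and (iii), every target of the form $TX\times TY$, $T^2X\times T^2Y$, $T_2X\times T_2Y$ is a product, so it suffices to check each diagram after composing with the two projections. After projecting, each diagram reduces to the naturality of $\pi$, $+$, $\tau$, $\lambda$ with respect to the projection morphisms $\pr_1, \pr_2$. For example, $\pr_1\circ\chi\circ\pi_{X\times Y}$-type compositions give $\pi_X\circ T\pr_1 = \pr_1\circ \pi_{X\times Y}$. For the addition, I use $T\pr_1\circ +_{X\times Y} = +_X\circ T_2\pr_1$. For $\tau$, I use $T^2\pr_1\circ\tau_{X\times Y}=\tau_X\circ T^2\pr_1$. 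For $\lambda$, I use $T^2\pr_1\circ\lambda_{X\times Y} = \lambda_X\circ T\pr_1$. In diagrams involving $T'\alpha\circ\alpha T$, I first identify its inverse as $\chi T$-composite. Concretely, I use that $(T\times T)\chi\circ \chi_{T,T}$ has components $T^2\pr_i$, by functoriality $T(T\pr_i)\circ \ldots$ after projecting. This makes each projected square one of the naturality identities above.

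The main obstacle I expect is bookkeeping in these last steps. One issue is the correct description of $T'\alpha\circ\alpha T$, i.e.\ of the composite $T^2(X\times Y)\to T(TX\times TY)\to T^2X\times T^2Y$. Another is verifying that the projected components are indeed $T^2\pr_i$, which requires the identity $\pr_i\circ\chi = T\pr_i$ together with functoriality of $T$. Beyond that, no use of the vertical-lift pullback axiom is needed, since Proposition~\ref{prop:MorphTanStruc} shows that it is automatically preserved by strong morphisms.
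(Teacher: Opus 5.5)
Your proposal is correct and follows essentially the same route as the paper: products commute with pullbacks for condition (i), and for (ii) and (iii) you obtain commutative squares for $\chi$ from the naturality of $\pi$, $+$, $\tau$, $\lambda$ with respect to the projections together with the universal property of the product (with the $T^2$-comparison identified as $\chi_{TX,TY}\circ T\chi_{X,Y}$), and then invert them. Two notational slips should be fixed: $\alpha=\chi^{-1}$ goes $TX\times TY\to T(X\times Y)$, not the other way, and the composite you write as $(T\times T)\chi\circ\chi_{T,T}$ should read $\chi_{TX,TY}\circ T\chi_{X,Y}$, as you in fact describe correctly afterwards.
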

  
\begin{proof}
Condition~(ii) assumes that $\chi$ is a natural isomorphism, so that (ii) implies (i).

To show that (i) implies (ii), we have to show that if $\chi$ is a natural isomorphism, then all conditions of Definition~\ref{def:MorTanCat} are satisfied for $\alpha = \chi^{-1}$. First, we observe that, since pullbacks commute with products, the morphism
\begin{equation*}
\begin{split}
  (\nu_2)_{X, Y}:
  T_2 X \times T_2 Y
  &=
  (TX \times_X TX) \times (TY \times_Y TY)
  \\
  &\stackrel{\cong}{\longrightarrow}
  (TX \times TY) \times_{X \times Y} (TX 
  \times TY)    
\end{split}
\end{equation*}
is an isomorphism. It follows by induction that Definition~\ref{def:MorTanCat}~(i) is satisfied. 

Due to the naturality of the bundle projection, we have the commutative diagram
\begin{equation*}
\begin{tikzcd}
TX \ar[d, "\pi_X"'] 
& 
T(X \times Y) 
\ar[l, "T\pr_1"'] \ar[r, "T\pr_2"]
\ar[d, "\pi_{X \times Y}"]
&
TY \ar[d, "\pi_Y"]
\\
X 
& 
X \times Y \ar[l, "\pr_1"] \ar[r, "\pr_2"']
&
Y
\end{tikzcd}
\end{equation*}
This implies that 
\begin{equation}
\label{eq:ProdPresMorph2a}
\begin{tikzcd}[column sep=-0.5em]
T(X \times Y) \ar[rr, "\chi_{X,Y}"] \ar[dr, "\pi_{X \times Y}"'] 
&& 
TX \times TY \ar[dl, "\pi_X \times \pi_Y"]
\\
& X \times Y &
\end{tikzcd}
\end{equation}
commutes by the universal property of products. By inverting the horizontal arrow in~\eqref{eq:ProdPresMorph2a}, we obtain the left diagram of Definition~\ref{def:MorTanCat}~(ii). 

Moreover, we see that $\chi_{X,Y}$ induces a morphism of the fiber products
\begin{equation*}
  (\chi \times_F \chi)_{X, Y}:
  T_2 (X \times Y)
  \longrightarrow
  (TX \times TY) \times_{X \times Y}
  (TX \times TY) \,.
\end{equation*}
From the naturality of the addition we obtain the commutative diagram
\begin{equation*}
\begin{tikzcd}
T_2 X \ar[d, "+_X"'] 
& 
T_2 (X \times Y) 
\ar[l, "T_2 \pr_1"'] \ar[r, "T_2 \pr_2"]
\ar[d, "+_{X \times Y}"]
&
T_2 Y \ar[d, "+_Y"]
\\
TX 
& 
T(X \times Y) \ar[l, "T\pr_1"] \ar[r, "T\pr_2"']
&
TY
\end{tikzcd}
\end{equation*}
which implies that the diagram
\begin{equation}
\label{eq:ProdPresMorph2b}
\begin{tikzcd}
T_2(X \times Y)
\ar[r] \ar[d, "+_{X \times Y}"']
&
T_2 X \times T_2 Y
\ar[d, "+_X \times +_Y"]
\\
T(X \times Y)
\ar[r, "\chi_{X,Y}"']
&
TX \times TY    
\end{tikzcd}
\end{equation}
commutes. It follows from the universal property of the product that the top horizontal arrow is the morphism
\begin{equation*}
\begin{split}
  T_2 (X \times Y)
  &\xrightarrow{~(\chi \times_F \chi)_{X,Y}~}
  (TX \times TY) \times_{X \times Y} (TX \times TY)
  \\
  &\xrightarrow{~(\nu_2^{-1})_{X,Y}~}
  T_2 X \times T_2 Y
  \,. 
\end{split}
\end{equation*}
This shows that by inverting the top and bottom horizontal arrows of~\eqref{eq:ProdPresMorph2b}, we obtain the second diagram of Definition~\ref{def:MorTanCat}~(ii).

From the naturality of the symmetric structure we obtain the commutative diagram
\begin{equation*}
\begin{tikzcd}
T^2 X \ar[d, "\tau_X"'] 
& 
T^2 (X \times Y) 
\ar[l, "T^2 \pr_1"'] \ar[r, "T^2 \pr_2"]
\ar[d, "\tau_{X \times Y}"]
&
T^2 Y \ar[d, "\tau_Y"]
\\
T^2 X 
& 
T^2 (X \times Y) \ar[l, "T^2 \pr_1"] \ar[r, "T^2 \pr_2"']
&
T^2 Y
\end{tikzcd}
\end{equation*}
which implies that the diagram
\begin{equation}
\label{eq:ProdPresMorph3a}
\begin{tikzcd}
T^2(X \times Y)
\ar[r] \ar[d, "\tau_{X \times Y}"']
&
T^2 X \times T^2 Y
\ar[d, "\tau_X \times \tau_Y"]
\\
T^2 (X \times Y)
\ar[r]
&
T^2 X \times T^2 Y    
\end{tikzcd}
\end{equation}
commutes. It follows from the universal property of the product that the horizontal arrows are given by the morphism
\begin{equation}
\label{eq:T2Prod}
  T^2 (X \times Y)
  \xrightarrow{~T\chi_{X,Y}~}
  T(TX \times TY)
  \xrightarrow{~\chi_{TX,TY}~}
  T^2 X \times T^2 Y
  \,. 
\end{equation}
This shows that by inverting the top and bottom horizontal arrows of~\eqref{eq:ProdPresMorph3a}, we obtain the first diagram of Definition~\ref{def:MorTanCat}~(iii).

Finally, from the naturality of the vertical lift we obtain the commutative diagram
\begin{equation*}
\begin{tikzcd}
TX \ar[d, "\lambda_X"'] 
& 
T(X \times Y) 
\ar[l, "T \pr_1"'] \ar[r, "T \pr_2"]
\ar[d, "\lambda_{X \times Y}"]
&
TY \ar[d, "\lambda_Y"]
\\
T^2 X 
& 
T^2 (X \times Y) \ar[l, "T^2 \pr_1"] \ar[r, "T^2 \pr_2"']
&
T^2 Y
\end{tikzcd}
\end{equation*}
which implies that the diagram
\begin{equation}
\label{eq:ProdPresMorph3b}
\begin{tikzcd}
T(X \times Y)
\ar[r, "\chi_{X,Y}"] \ar[d, "\lambda_{X \times Y}"']
&
TX \times TY
\ar[d, "\lambda_X \times \lambda_Y"]
\\
T^2 (X \times Y)
\ar[r]
&
T^2 X \times T^2 Y    
\end{tikzcd}
\end{equation}
commutes. It follows from the universal property of the product that the bottom horizontal arrow is the same morphism as~\eqref{eq:T2Prod}. By inverting the top and bottom horizontal arrows of~\eqref{eq:ProdPresMorph3b}, we obtain the second diagram of Definition~\ref{def:MorTanCat}~(iii). We conclude that under the assumption that $\chi$ is a natural isomorphism, all the conditions of Definition~\ref{def:MorTanCat} are satisfied, which finishes the proof.
\end{proof}

\begin{Remark}
Proposition~\ref{prop:CartTanPreserve} can be refined as follows. Assume that the product functor $\calC \times \calC \to \calC$, $(X,Y) \mapsto X \times Y$ together with some natural isomorphism $\alpha_{X,Y}: TX \times TY \to T(X \times Y)$ is a strong morphism of tangent categories. From the naturality of $\alpha^{-1}$ we obtain the commutative diagram
\begin{equation*}
\begin{tikzcd}
TX \ar[d, "\alpha^{-1}_{X,*}"'] 
& 
T(X \times Y) 
\ar[l, "T\pr_1"'] \ar[r, "T\pr_2"]
\ar[d, "\alpha^{-1}_{X \times Y}"]
&
TY \ar[d, "\alpha^{-1}_{*,Y}"]
\\
TX 
& 
TX \times TY \ar[l, "\pr_1"] \ar[r, "\pr_2"']
&
TY
\end{tikzcd}
\end{equation*}
which implies that the diagram
\begin{equation}
\label{eq:ProdPresMorph0}
\begin{tikzcd}[column sep=-1.0em]
T(X \times Y)
\ar[rr, "\chi_{X,Y}"] \ar[dr, "\alpha^{-1}_{X \times Y}"']
&&
TX \times TY
\ar[dl, "\alpha^{-1}_{X, *} \times \alpha^{-1}_{*,Y}"]
\\
& 
TX \times TY
&
\end{tikzcd}
\end{equation}
commutes. This shows that
\begin{equation*}
  \chi_{X,Y} = (\alpha_{X,*} \times \alpha_{*,Y}) 
  \circ \alpha^{-1}_{X,Y}
  \,,
\end{equation*}
so that $\chi_{X,Y}$ is an isomorphism. We conclude that if the product functor is a strong morphism of tangent categories at all, then the natural transformation must be $\chi$.    
\end{Remark}

\begin{Remark}
Proposition~\ref{prop:CartTanPreserve} shows that Definition~\ref{def:CartTan} of cartesian tangent structures is equivalent to Definition~2.8 in \cite{CockettCruttwell:2014}.
\end{Remark}

\subsection{Partial derivatives}

It was observed in \cite[Def.~2.9]{CockettCruttwell:2014} that when the tangent structure on $\calC$ is cartesian, we can define the \textbf{partial tangent morphisms} of a morphism $f: X \times Y \to Z$ in $\calC$ by
\begin{align}
  \Tpart{1} f: TX \times Y 
  &\xrightarrow{~\id_{TX} \times 0_Y~}
  TX \times TY \xrightarrow{~\chi_{X,Y}^{-1}~}
  T(X \times Y) \xrightarrow{~Tf~} 
  TZ
  \\
  \label{eq:PartialT2}
  \Tpart{2} f: X \times TY 
  &\xrightarrow{~0_X \times \id_{TY}~}
  TX \times TY \xrightarrow{~\chi_{X,Y}^{-1}~}
  T(X \times Y)\xrightarrow{~Tf~} 
  TZ
  \,,
\end{align}
where the index refers to the factor in the product and where $\chi$ is the natural transformation~\eqref{eq:chiXY}. In \cite[Def.~2.9]{CockettCruttwell:2014} the partial tangent morphisms were labelled by the factors $X$ and $Y$. We use positional indices in order to describe cases where $X = Y$.

The partial tangent morphisms are functorial in $X$, $Y$, and $Z$. That is, if there is a commutative diagram
\begin{equation*}
\begin{tikzcd}
X \times Y 
\ar[r, "f"]
\ar[d, "\alpha \times \beta"']
&
Z
\ar[d, "\gamma"]
\\
X' \times Y' 
\ar[r, "f'"']
&
Z'
\end{tikzcd}
\end{equation*}
then the diagram
\begin{equation}
\label{eq:T2functorial}
\begin{tikzcd}[column sep=large]
X \times TY 
\ar[r, "\Tpart{2} f"]
\ar[d, "\alpha \times T\beta"']
&
TZ
\ar[d, "T\gamma"]
\\
X' \times TY' 
\ar[r, "\Tpart{2} f'"']
&
TZ'
\end{tikzcd}
\end{equation}
commutes. Since the tangent morphism is linear, $\Tpart{2}f$ is linear in the second argument. That is, the diagram
\begin{equation}
\label{eq:T2linear}
\begin{tikzcd}[column sep=5em]
X \times T_2 Y 
\ar[r, "\cong"]
\ar[d, "\id_X \times +_Y"']
&
(X \times TY) \times_{X \times Y} (X \times TY)
\ar[r, "{\Tpart{2}f \, \times_f \, \Tpart{2} f}"] 
&
T_2 Z
\ar[d, "+_Z"]
\\
X \times TY 
\ar[rr, "\Tpart{2} f"']
&&
TZ
\end{tikzcd}
\end{equation}
commutes. The analogous statements hold for $\Tpart{1} f$. From the partial tangent morphisms we can reconstruct the tangent morphism by the following result, which is straightforward to prove.

\begin{Proposition}[Prop.~2.10 in \cite{CockettCruttwell:2014}]
\label{prop:TSumPartials}
Let $f: X \times Y \to Z$ be a morphism of $\calC$. Then
\begin{equation*}
  Tf = +_Z \circ 
  \bigl(\Tpart{1} f \circ 
  (\id_{TX} \times \pi_Y),
  \Tpart{2} f \circ
  (\pi_X \times \id_{TY})
  \bigr) \circ
  \chi_{X,Y}
  \,.
\end{equation*}
\end{Proposition}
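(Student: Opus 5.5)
The strategy is to split an arbitrary tangent vector of $X \times Y$, via $\chi_{X,Y}$, into a sum of a vector tangent to the $X$-direction and one tangent to the $Y$-direction, and then to use that $Tf$ is additive on the fibers of $\pi$. Write $u \coloneqq \chi_{X,Y}$, so a generalized element of $T(X\times Y)$ corresponds to a pair $(a,b) \in TX \times TY$ with base point $(\pi_X a, \pi_Y b)$.

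The first step is a decomposition in $TX \times TY$. Using the fiberwise addition of $TX$ and $TY$, we have
\begin{equation*}
  (a,b) = \bigl(a, 0_Y \pi_Y b\bigr) + \bigl(0_X \pi_X a, b\bigr),
\end{equation*}
because $a + 0_X\pi_X a = a$ and $0_Y\pi_Y b + b = b$ by the unit axioms of the bundles of abelian groups $TX\to X$ and $TY\to Y$. Here the sum is taken in the bundle of abelian groups $TX\times TY \to X\times Y$, which is the external product of Example~\ref{ex:BundlesTalgExamples1}~(iv). Both summands lie over the same base point $(\pi_X a,\pi_Y b)$. As morphisms, these are
\[
  (\id_{TX}\times 0_Y)\circ(\id_{TX}\times\pi_Y)
  \quad\text{and}\quad
  (0_X\times\id_{TY})\circ(\pi_X\times\id_{TY}).
\]

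The second step transports this decomposition back to $T(X\times Y)$ through $\chi^{-1}$. By Proposition~\ref{prop:CartTanPreserve}, $\alpha=\chi^{-1}$ is a strong morphism of tangent categories. In particular, the right diagram of~\eqref{diag:Strong1} says that $\chi^{-1}$ carries the product addition $(+_X\times+_Y)\circ\nu_2^{-1}$ to $+_{X\times Y}$. This gives
\begin{equation*}
  \id_{T(X\times Y)} = +_{X\times Y}\circ\bigl(\chi^{-1}(\id_{TX}\times 0_Y)(\id_{TX}\times\pi_Y),\ \chi^{-1}(0_X\times\id_{TY})(\pi_X\times\id_{TY})\bigr)\circ\chi_{X,Y}.
\end{equation*}
The pair lands in $T_2(X\times Y)$ because $\chi^{-1}$ commutes with the projections to $X\times Y$, by the left triangle of~\eqref{diag:Strong1}.

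The third step composes with $Tf$. By naturality of $+$, we have $Tf\circ +_{X\times Y} = +_Z\circ T_2 f$, and $T_2 f$ acts componentwise on pairs. Applying $Tf$ to each component yields exactly $\Tpart{1} f\circ(\id_{TX}\times\pi_Y)$ and $\Tpart{2} f\circ(\pi_X\times\id_{TY})$, by the definitions of the partial tangent morphisms. This is the claimed formula.

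The main obstacle is the bookkeeping in the first two steps. One must identify the addition on $TX\times TY$ with the one transported from $T(X\times Y)$, which is precisely the content of the additivity square in Proposition~\ref{prop:CartTanPreserve}. One must also check the fiber-product compatibilities, which amounts to $\pi_X\circ 0_X=\id_X$. The rest is formal.
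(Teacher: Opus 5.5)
Your argument is correct. You split $(a,b) = (a,0_Y\pi_Y b) + (0_X\pi_X a, b)$ by the unit law in the external product $TX\times TY$, transport this to $T(X\times Y)$ using the additivity square for $\chi^{-1}$ from Proposition~\ref{prop:CartTanPreserve}, and then push it through $Tf$ by naturality of $+$; this yields exactly the stated formula, and you also check the required fiber compatibilities. The paper gives no proof of its own (it calls the result straightforward and cites Cockett--Cruttwell), and yours is the standard verification it alludes to.
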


\section{Scalar multiplication}
\label{sec:ScalarMult}

\subsection{Module structure}
\label{sec:ModStructure}

Let $R$ be a ring object in the category $\calC$. It gives rise to an endofunctor $\Id \times R: \calC \to \calC$, $X \mapsto X \times R$, which is equipped with the projection $\pr_1: \Id \times R \to \Id$. The ring structure of $R$ equips $\Id \times R \to \Id$ with the structure of a ring internal to endofunctors over $\Id$. An $(\Id \times R \to \Id)$-module in $\End(\calC) \Comma \Id$ will be called a \textbf{bundle of $R$-modules} over $\Id$ (Definition~\ref{def:BundleOfTAlgs}). An $(\Id \times R \to \Id)$-module structure on a bundle $T \to \Id$ will be called, for short, an \textbf{$R$-module structure} on $T \to \Id$. For clarity and later reference we spell out this structure explicitly.

\begin{Proposition}
\label{prop:RModuleBundle}
Let $\calC$ be a tangent category; let $R$ be a unital ring object in $\calC$ with addition $\Radd$, zero $\Rzero$, multiplication $\Rmult$, and unit $\Runit$. An $R$-module structure on the bundle $\pi:T \to \Id$ is given by a natural morphism
\begin{equation*}
  \kappa_X: R \times TX 
  \longrightarrow TX
  \,,
\end{equation*}
such that the following diagrams commute for all $X \in \calC$:
\begin{itemize}

\item[(i)] Morphism of bundles:
\begin{equation*}
\begin{tikzcd}[column sep={tiny}]
R \times TX \ar[rr, "\kappa_X"] \ar[dr, "\pi_X \circ \, \pr_2"'] &&
TX \ar[dl, "\pi_X"]
\\
& X &
\end{tikzcd}
\end{equation*}

\item[(ii)] Associativity:
\begin{equation*}
\begin{tikzcd}[column sep={large}]
R \times R \times TX \ar[r, "\id_R \times \kappa_X"] 
\ar[d, "\Rmult \times \id_{TX}"'] &
R \times TX \ar[d, "\kappa_X"]
\\
R \times TX \ar[r, "\kappa_X"'] & TX
\end{tikzcd}
\end{equation*}

\item[(iii)] Unitality:
\begin{equation*}
\begin{tikzcd}[column sep=3em]
* \times TX \ar[r, "\Runit \times \id_{TX}"] \ar[dr, "\cong"'] & 
R \times TX \ar[d, "\kappa_X"]
\\
& TX
\end{tikzcd}    
\end{equation*}

\item[(iv)] Linearity in $R$:
\begin{equation*}
\begin{tikzcd}[column sep={large}, row sep=3em]
R \times R \times TX
\ar[r, "\Radd \, \times \, \id_{TX}"] 
\ar[d, "{\big( \kappa_X \circ (\pr_1, \pr_3), \, \kappa_X \circ (\pr_2, \pr_3) \big)}"'] &
R \times TX 
\ar[d, "\kappa_X"]
\\
TX \times_X TX 
\ar[r, "+_X"'] 
&
TX
\end{tikzcd}
\end{equation*}

\item[(v)] Linearity in $TX$:
\begin{equation*}
\begin{tikzcd}[column sep={large}, row sep=3em]
R \times TX \times_X TX
\ar[r, "\id_R \, \times \, +_X"] 
\ar[d, "{\bigl( \kappa_X \circ (\pr_1, \pr_2), \, \kappa_X \circ (\pr_1, \pr_3) \bigr)}"'] &
R \times TX 
\ar[d, "\kappa_X"]
\\
TX \times_X TX 
\ar[r, "+_X"'] 
&
TX
\end{tikzcd}
\end{equation*}
\end{itemize}
\end{Proposition}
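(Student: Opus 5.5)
The plan is to unwind the definition of an $(\Id \times R \to \Id)$-module in $\End(\calC)\Comma\Id$ from Definition~\ref{def:BundleOfTAlgs}, working objectwise. A module structure over the ring bundle $\Id\times R\to\Id$ on the abelian group bundle $\pi:T\to\Id$ consists of one extra structure morphism over $\Id$, the action. It is a natural transformation
\begin{equation*}
  (\Id\times R)\times_\Id T \longrightarrow T
\end{equation*}
over $\Id$. Its source at $X$ is $(X\times R)\times_X TX$. Using the canonical isomorphism $(X\times R)\times_X TX\cong R\times TX$, given by $((x,r),v)\mapsto (r,v)$ with inverse $(r,v)\mapsto((\pi_X v,r),v)$, this becomes a natural family $\kappa_X:R\times TX\to TX$. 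The requirement that the action is a morphism over $\Id$ translates into $\pi_X\circ\kappa_X=\pi_X\circ\pr_2$, which is~(i).

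Next I would write the equational axioms of a module over a ring in the theory $\calT$ of modules: associativity, unitality, and the two distributivity laws. In the slice $\calC_{/X}$, every product becomes a fiber product over $X$ and every constant becomes a section over $X$.
\begin{itemize}
\item In the ring bundle, the product $(X\times R)\times_X(X\times R)\cong X\times R\times R$ and multiplication is $\id_X\times\Rmult$. So associativity of the action becomes the square $\kappa_X\circ(\id_R\times\kappa_X)=\kappa_X\circ(\Rmult\times\id_{TX})$ on $R\times R\times TX$, after the identification $(X\times R\times R)\times_X TX\cong R\times R\times TX$. This is~(ii).
\item The unit of the ring bundle is $X\to X\times R$, $x\mapsto(x,\Runit)$. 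Unitality therefore gives~(iii).
\item Distributivity over the ring addition $\id_X\times\Radd$ gives~(iv). The map to $TX\times_XTX$ is well defined by~(i).
\item Distributivity over $+_X$ uses the isomorphism $(X\times R)\times_X T_2X\cong R\times TX\times_XTX$ and gives~(v).
\end{itemize}
Naturality in $X$ is part of $\kappa$ being a natural transformation of endofunctors.

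Conversely, given $\kappa$ satisfying (i)--(v), transporting it back along the same isomorphisms gives a functor $\calTcone\to\End(\calC)$. It sends the generators of the module theory to the fiber powers of $T$ and of $\Id\times R$ over $\Id$, and it preserves the required fiber products by Axiom~(i) of Definition~\ref{def:TangentStructure} (pointwise fiber products). The equations hold because they are exactly (ii)--(v). The compatibilities with zero and negatives, such as $\kappa(\Rzero,v)=0$ and $\kappa(r,i v)=i\kappa(r,v)$, need not be listed separately: they follow from (iv) and (v) by the usual abelian-group argument, since $T\to\Id$ is a bundle of abelian groups.

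The main obstacle is purely bookkeeping. One has to check that the canonical isomorphisms $(X\times R^n)\times_X T_kX\cong R^n\times T_kX$ are natural and compatible with the projections, so that each slice-category axiom matches the stated diagram exactly. In particular, the pairings into $TX\times_XTX$ in (iv) and (v) must be seen to land in the fiber product, which is where (i) is used. I would handle this with one lemma describing the isomorphism and its inverse explicitly, and then verify each axiom by composing with projections.
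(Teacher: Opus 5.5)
Your proposal is correct and follows the paper's approach. The paper gives no separate proof: it presents (i)--(v) as the explicit spelling-out of an $(\Id\times R\to\Id)$-module structure on $T\to\Id$ in $\End(\calC)\Comma\Id$, using the identifications $(X\times R)\times_X T_kX\cong R\times T_kX$ exactly as you do. Your observation that compatibility with $\Rzero$ and with the zero section follows from (iv) and (v) also matches the Remark the paper places right after the proposition.
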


\begin{Remark}
As is the case for any module structure, linearity in $R$, expressed by diagram~(iv), implies that the scalar multiplication by $\Rzero: * \to R$ sends $TX$ to the zero section, that is, the diagram
\begin{equation}
\label{diag:RzeroPreserve}
\begin{tikzcd}[column sep=4em,row sep=2em]
* \times TX \ar[r, "{\Rzero \times \id_{TX}}"] \ar[d, "{\cong}"'] & R \times TX \ar[dd, "{\kappa_{X}}"] \\
TX \ar[d, "\pi_X"'] & \\
X \ar[r, "0_X"'] & TX
\end{tikzcd}   
\end{equation}        
is commutative. Similarly, linearity in $TX$, expressed by diagram~(v), implies that the scalar multiplication sends the zero section to the zero section, that is, the diagram
\begin{equation}
\label{diag:RzeroPreserve2}
\begin{tikzcd}[column sep=4em,row sep=2em]
R \times X \ar[r, "{\id_R \times 0_{X}}"] \ar[d, "{\pr_2}"'] & R \times TX \ar[d, "{\kappa_{X}}"] 
\\
X \ar[r, "0_X"'] & TX
\end{tikzcd}   
\end{equation}
commutes.
\end{Remark}

\begin{Terminology}
  A tangent category with an $R$-module structure on the tangent bundle $T \to \Id$ will be called, for short, a tangent category with an $R$-module structure. Proposition~\ref{prop:ScalarImpliesNegatives} implies that the tangent category is a Rosick\'y tangent category.
\end{Terminology}

\begin{Proposition}
\label{prop:ScalarImpliesNegatives}
Every tangent category with an $R$-module structure is a Rosick\'y tangent category.
\end{Proposition}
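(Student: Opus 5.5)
The plan is to build the additive inverse of $T \to \Id$ directly from the ring structure of $R$. Here ``tangent category'' means the Cockett--Cruttwell version, so $T \to \Id$ is a priori only a bundle of commutative monoids. Since $R$ is a unital ring object, it carries the morphism $\Rminus: R \to R$, and we set
\begin{equation*}
  \Rminus\Runit \coloneqq \Rminus \circ \Runit : * \to R .
\end{equation*}
We then define
\begin{equation*}
  i_X \coloneqq \kappa_X \circ (\Rminus\Runit \times \id_{TX}) : TX \cong * \times TX \longrightarrow TX .
\end{equation*}
By Proposition~\ref{prop:RModuleBundle}~(i), $\pi_X \circ i_X = \pi_X$, so $i_X$ is a morphism of bundles over $X$.

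The key step is the inverse identity $+_X \circ (\id_{TX}, i_X) = 0_X \circ \pi_X$. The pair $(\id_{TX}, i_X)$ lands in $T_2X$ because both components lie over $\pi_X$. By unitality (iii), it equals
\begin{equation*}
  \bigl(\kappa_X \circ (\Runit \times \id), \kappa_X \circ (\Rminus\Runit \times \id)\bigr).
\end{equation*}
By linearity in $R$ (iv), precomposed with $(\Runit, \Rminus\Runit, \id_{TX}): TX \to R \times R \times TX$, its sum is $\kappa_X \circ (\Radd \circ (\Runit, \Rminus\Runit) \times \id)$. The ring axiom gives $\Radd \circ (\Runit, \Rminus\Runit) = \Rzero$. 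Finally, \eqref{diag:RzeroPreserve} says $\kappa_X \circ (\Rzero \times \id) = 0_X \circ \pi_X$, which is exactly the zero of the bundle. Commutativity of $+$ gives the identity on the other side as well.

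Next we check naturality of $i$: for $f: X \to Y$, $Tf \circ i_X = i_Y \circ Tf$. This follows immediately from naturality of $\kappa$ in $X$.

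The only mildly subtle point is whether Definition~\ref{def:TangentStructure} asks for anything further once $+$ has inverses. One might worry that $\tau$, $\lambda$ and $T$ must also respect $i$, or that $T$ must preserve the inverse. These compatibilities are automatic, because a morphism of commutative monoids between groups preserves inverses, and inverses in a monoid are unique when they exist. So $\tau$ and $\lambda$, being additive by axioms (iii) and (iv), commute with $i$. Likewise $Ti$ is the inverse for the bundle $T\pi: T^2 \to T$, since $T$ preserves $T_2$ and applying $T$ to the inverse identity yields the corresponding identity. With these observations all of Rosick\'y's axioms hold, and the tangent category is a Rosick\'y tangent category.
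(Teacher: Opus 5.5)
Your construction $i_X = \kappa_X \circ (\Rminus\Runit \times \id_{TX})$ and your chain of steps (unitality, linearity in $R$, $\Radd\circ(\Runit,\Rminus\Runit) = \Rzero$, then $\Rzero\cdot x = 0$) are the internal version of the paper's computation $x + (\Rminus\Runit)\cdot x = (\Runit + \Rminus\Runit)\cdot x = \Rzero\cdot x = 0$. Your remarks on naturality of $i$, and on $\tau$, $\lambda$, $T$ automatically respecting inverses, are fine. The weak point, which the paper's proof shares, is the last step, where you invoke \eqref{diag:RzeroPreserve}. The paper justifies that diagram from linearity in $R$: $\Rzero\cdot x = (\Rzero+\Rzero)\cdot x = \Rzero\cdot x + \Rzero\cdot x$. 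Passing from $y = y + y$ to $y = 0$, however, needs cancellation, which is exactly what the missing negatives would supply. Here $T \to \Id$ is a priori only a bundle of commutative monoids, so the step is circular.

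The step does not follow from (i)--(v) of Proposition~\ref{prop:RModuleBundle} at all. Take any ring acting trivially, $r\cdot x = x$, on the two-element monoid $\{0,u\}$ with $u + u = u$. This action satisfies associativity, unitality and both linearity laws, yet $\Rzero\cdot u = u \neq 0$ and $u$ has no inverse. The same happens inside a tangent category. The zero ring $R = *$, with $\kappa_X$ the canonical isomorphism $* \times TX \cong TX$, satisfies (i)--(v) whenever $+_X \circ (\id_{TX}, \id_{TX}) = \id_{TX}$. This holds, for example, in the tangent category of the Cartesian differential category of polynomials over the Boolean semiring, and that tangent category has no negatives. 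So your argument, like the paper's, becomes a proof only once $\kappa_X \circ (\Rzero \times \id_{TX}) = 0_X \circ \pi_X$ is included as an axiom of the $R$-module structure, as is standard for semimodules over a commutative monoid. With that axiom added, your proof goes through verbatim.
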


\begin{proof}
This follows from the fact that a commutative monoid with a compatible action of a ring is an abelian group. In the category of sets, we have for every element $x$ of a commutative monoid
\begin{equation*}
\begin{split}
  x + (-\Runit) \cdot x
  &= \Runit \cdot x + (\Rminus \Runit \cdot x)
  = \bigl( \Runit + (\Rminus \Runit) \bigr) \cdot x
  = \Rzero \cdot x
  \\
  &= 0
  \,,
\end{split}
\end{equation*}
which shows that $-x \coloneqq (\Rminus\Runit \cdot x)$ is the additive inverse of $x$. It is straightforward to internalize this statement to an arbitrary category.
\end{proof}

Proposition~\ref{prop:ScalarImpliesNegatives} shows that if we want to work in tangent categories without negatives, the ring $R$ has to be replaced by a rig.


\begin{Proposition}
\label{prop:RCartTan}
In a cartesian tangent category with an $R$-module structure $\kappa_X: R \times TX \to TX$, the diagram
\begin{equation*}
\begin{tikzcd}[column sep=4em, row sep=3em]
R \times T(X \times Y) 
\ar[r, "\id_R \times \chi_{X,Y}"]
\ar[d, "\kappa_{X \times Y}"']
&
R \times TX \times TY
\ar[d, "{\big( \kappa_X \circ (\pr_1, \pr_2), \, \kappa_Y \circ (\pr_1, \pr_3) \big)}"]
\\
T(X \times Y) \ar[r, "\chi_{X,Y}"'] 
&
TX \times TY
\end{tikzcd}
\end{equation*}
commutes for all $X, Y \in \calC$.
\end{Proposition}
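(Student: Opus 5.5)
The key observation is that $\kappa$ is a \emph{natural} transformation $R \times T \to T$. The diagram will therefore follow from naturality applied to the two projections $\pr_1: X \times Y \to X$ and $\pr_2: X \times Y \to Y$, combined with the universal property of the product $TX \times TY$.

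Since the target of both paths is the product $TX \times TY$, the diagram commutes if and only if it commutes after composing with the projections onto $TX$ and onto $TY$. The two cases are symmetric, so I would only carry out the first. Composing the lower path with $\pr_1$ gives
\begin{equation*}
  \pr_1 \circ \chi_{X,Y} \circ \kappa_{X\times Y} = T\pr_1 \circ \kappa_{X \times Y},
\end{equation*}
by the definition~\eqref{eq:chiXY} of $\chi_{X,Y}$. Composing the upper path with $\pr_1$ gives
\begin{equation*}
  \kappa_X \circ (\pr_1,\pr_2) \circ (\id_R \times \chi_{X,Y}) = \kappa_X \circ (\id_R \times T\pr_1),
\end{equation*}
because $\pr_2 \circ (\id_R \times \chi_{X,Y}) = T\pr_1 \circ \pr_2$ as morphisms $R \times T(X\times Y) \to TX$.

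It then remains to use naturality of $\kappa$ with respect to the morphism $\pr_1: X \times Y \to X$. This is the commutative square
\begin{equation*}
  T\pr_1 \circ \kappa_{X\times Y} = \kappa_X \circ (\id_R \times T\pr_1),
\end{equation*}
which identifies the two composites above. The second component is handled in the same way, using $\pr_2$ and $\kappa_Y$. The universal property of the product then gives commutativity of the whole diagram.

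There is no real obstacle here; the only point needing care is the bookkeeping of projections in the triple product $R \times TX \times TY$. Note that the argument uses only naturality of $\kappa$ and not the module axioms, and that cartesianness is used only so that $\chi_{X,Y}$ makes sense as the map in the statement. Invertibility of $\chi_{X,Y}$ is not needed, although it allows the statement to be read as saying that $\kappa_{X\times Y}$ corresponds to the product module structure.
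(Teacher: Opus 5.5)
Your proof is correct and follows essentially the same route as the paper: naturality of $\kappa$ with respect to $\pr_1$ and $\pr_2$, combined with $\chi_{X,Y} = (T\pr_1, T\pr_2)$ and the universal property of the product $TX \times TY$. Your side remarks are also accurate: the paper's argument likewise uses neither the module axioms nor the invertibility of $\chi_{X,Y}$.
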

\begin{proof}
By the naturality of $\kappa$, the diagram
\begin{equation*}
\begin{tikzcd}[column sep=4em, row sep=3em]
R \times T(X \times Y) 
\ar[r, "\id_R \times T\pr_1"]
\ar[d, "\kappa_{X \times Y}"']
&
R \times TX
\ar[d, "\kappa_X"]
\\
T(X \times Y) \ar[r, "T\pr_1"'] 
&
TX
\end{tikzcd}
\end{equation*}
commutes. Analogously, $T\pr_2 \circ \kappa_{X \times Y} = \kappa_Y \circ (\id_R \times T\pr_2)$. With this, we obtain
\begin{equation*}
\begin{split}
  \chi_{X,Y} \circ \kappa_{X \times Y}
  &=
  (T\pr_1, T\pr_2) \circ \kappa_{X \times Y}
  \\
  &=
  (T\pr_1 \circ \kappa_{X \times Y} , T\pr_2 \circ \kappa_{X \times Y})
  \\
  &=
  \bigl( 
  \kappa_X \circ (\id_R \times T\pr_1), 
  \kappa_Y \circ (\id_R \times T\pr_2) 
  \bigr)
  \\
  &=
  \bigl( 
  \kappa_X \circ (\id_R \times (\pr_1 \circ \chi_{X,Y})),
  \kappa_Y \circ (\id_R \times (\pr_2 \circ \chi_{X,Y}))
  \bigr)
  \\
  &=
  \bigl( 
  \kappa_X \circ (\pr_1, \pr_2), 
  \kappa_Y \circ (\pr_1, \pr_3) 
  \bigr) \circ (\id_R \times \chi_{X,Y})
  \,,
\end{split}
\end{equation*}
which proves the proposition.
\end{proof}

In other words, Proposition~\ref{prop:RCartTan} states that $\chi_{X,Y}: T(X \times Y) \to TX \times TY$ is an isomorphism of bundles of $R$-modules over $X \times Y$ if we equip $TX \times TY$ with the diagonal $R$-module structure.

\begin{Warning}
In the category of sets, an abelian group is the same thing as a $\bbZ$-module. For an abelian group internal to a category $\calC$, this is only true if $\bbZ$ is naturally a ring object in $\calC$, which is not always the case. For example, a category with finite biproducts contains no other ring object than the zero ring. It follows that the only possible tangent structure with an $R$-module structure on such a tangent category is the trivial tangent structure \cite[Section~8.10.3]{AintablianBlohmann:2025}.
\end{Warning}

\subsection{Tangent bundle of a group}

Throughout this section, we assume that $\calC$ is a cartesian tangent category with an $R$-module structure. Let $X$ be an object in $\calC$. Let $x: * \to X$ be a point in $X$. The pullback
\begin{equation*}
  T_x X \coloneqq * \times_X^{x,\pi_X} TX
  \,,
\end{equation*}
if it exists, is called the \textbf{tangent fiber} over $x$. The inclusion of the fiber will be denoted by
\begin{equation*}
  j_x : T_x X \longrightarrow TX
  \,.
\end{equation*}
The addition of tangent vectors, the zero section, and the $R$-module structure restrict to $T_x X$, that is, we have morphisms
\begin{align*}
  +_x: T_x X \times T_x X &\longrightarrow
  T_x X
  \\
  0_x: * &\longrightarrow
  T_x X
  \\
  \kappa_x: R \times T_x X &\longrightarrow
  T_x X
  \,,
\end{align*}
such that the diagrams
\begin{equation}
\label{eq:FiberRMod1}
\begin{tikzcd}
T_x X \times T_x X
\ar[r, "+_x"]
\ar[d, "j_x \times j_x"']
&
T_x X
\ar[d, "j_x"]
\\
TX \times_X TX
\ar[r, "+_X"']
&
TX
\end{tikzcd}
\qquad \qquad
\begin{tikzcd}
*
\ar[r, "0_x"]
\ar[d, "x"']
&
T_x X
\ar[d, "j_x"]
\\
X
\ar[r, "0_X"']
&
TX
\end{tikzcd}
\end{equation}
\begin{equation}
\begin{tikzcd}
R \times T_x X
\ar[r, "\kappa_x"]
\ar[d, "\id_R \times j_x"']
&
T_x X
\ar[d, "j_x"]
\\
R \times TX
\ar[r, "\kappa_X"']
&
TX
\end{tikzcd}    
\end{equation}
commute. These morphisms endow $T_x X$ with the structure of an $R$-module.

\begin{Remark}
Unlike in \cite[Def.~4.13]{CockettCruttwell:2014}, we do not make the assumption that the tangent functor preserves the pullback defining the tangent fibers, since this would exclude interesting applications such as to diffeological spaces.
\end{Remark}

A \textbf{trivialization} of the tangent bundle of an object $X$ in $\calC$ is an isomorphism
\begin{equation*}
  TX \cong X \times A
\end{equation*}
of bundles of $R$-modules over $X$, where $A$ is an $R$-module in $\calC$. By taking the fiber at a point $x: * \to X$, we obtain an isomorphism of $R$-modules $T_x X \cong A$.

\begin{Proposition}
\label{prop:TGtrivial}
Let $(G,m,e)$ be a group object in $\calC$; assume that the pullback
\begin{equation*}
  \frakg \coloneqq T_e G = * \times_{G}^{e,\pi_G} TG
\end{equation*}
exists. Then the tangent bundle of $G$ has a natural trivialization
\begin{equation*}
  TG \cong G \times \frakg
  \,.
\end{equation*}
\end{Proposition}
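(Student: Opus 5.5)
We construct the trivialization by left translation, mimicking the classical Maurer--Cartan construction. The group multiplication $m: G \times G \to G$ has a partial tangent morphism in the second factor (see~\eqref{eq:PartialT2}):
\begin{equation*}
  \Tpart{2} m : G \times TG \longrightarrow TG \,.
\end{equation*}
We define
\begin{equation*}
  \Phi: G \times \frakg \xrightarrow{~\id_G \times j_e~} G \times TG \xrightarrow{~\Tpart{2} m~} TG \,,
\end{equation*}
which plays the role of $(g,\xi) \mapsto T L_g(\xi)$.

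For the candidate inverse $\Psi: TG \to G \times \frakg$, the first component is $\pi_G$. For the second, we form $\Tpart{2} m \circ (\iota \circ \pi_G, \id_{TG}): TG \to TG$, where $\iota$ is the inverse of $G$. This morphism translates a tangent vector at $g$ back by $g^{-1}$.

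We check that this lands over $e$ using the functoriality~\eqref{eq:T2functorial} applied to $\pi$. Concretely, $\pi_G \circ \Tpart{2} m = m \circ (\id_G \times \pi_G)$, which follows from naturality of $\pi$ together with the left triangle of~\eqref{diag:Strong1} for $\chi^{-1}$ and $\pi \circ 0 = \id$. Composed with $(\iota \circ \pi_G, \id)$, this gives $m \circ (\iota, \id) \circ \pi_G = e \circ !$. So the morphism factors through the pullback $\frakg = * \times_G TG$, and we obtain $\Psi$.

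To show $\Phi$ and $\Psi$ are mutually inverse, we use an associativity property of partial tangents. Applying~\eqref{eq:T2functorial} to the associativity square $m \circ (\id \times m) = m \circ (m \times \id)$, viewed as maps $(G \times G) \times G \to G$, should give
\begin{equation*}
  \Tpart{2} m \circ (\id_G \times \Tpart{2} m) = \Tpart{2} m \circ (m \times \id_{TG}) \,.
\end{equation*}
Deriving this is the main technical step. It requires comparing $\Tpart{2}$ of $m \circ (\id \times m)$ with the composite of partial tangents, using $T(f \circ g) = Tf \circ Tg$, the identity $T(\id \times m) \circ \chi^{-1} \circ (0 \times \id) = \chi^{-1} \circ (0 \times Tm)$, and $T0 \circ 0 = 0 \circ 0$ via naturality of $0$. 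I would prove a small lemma: for $f: X \times Y \to Z$ and $h: W \times Z \to V$, one has $\Tpart{2}\bigl(h \circ (\id_W \times f)\bigr) = \Tpart{2} h \circ (\id_W \times \Tpart{2} f)$. We also need unitality, $\Tpart{2} m \circ (e \times \id_{TG}) = \id_{TG}$, which follows from $m \circ (e \times \id) = \id$ together with $T$ applied to the partial tangent of a projection. With these two facts both composites reduce to the identity, using the group axioms $m \circ (\iota, \id) = e$ and $m \circ (\id, \iota) = e$.

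It remains to show that $\Phi$ is an isomorphism of bundles of $R$-modules over $G$, that is, $\pi_G \circ \Phi = \pr_1$. This uses the same projection identity as above, now with $\pi_G \circ j_e = e$. Additivity of $\Phi$ follows from the linearity~\eqref{eq:T2linear} of $\Tpart{2} m$ together with~\eqref{eq:FiberRMod1}. $R$-linearity follows from naturality of $\kappa$ and Proposition~\ref{prop:RCartTan}. The argument is as follows:
\begin{itemize}
  \item $\chi^{-1}$ intertwines $\kappa$ with the diagonal action.
  \item $\kappa$ fixes zero sections by~\eqref{diag:RzeroPreserve2}.
  \item Therefore $\Tpart{2} m \circ (\id \times \kappa_G) = \kappa_G \circ (\id_R \times \Tpart{2} m)$ up to reordering of factors.
\end{itemize}
Naturality of the trivialization in $G$, with respect to group morphisms preserving the chosen pullbacks, follows from~\eqref{eq:T2functorial}.

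I expect the main obstacle to be the composition lemma for partial tangents. The remaining steps are bookkeeping with universal properties.
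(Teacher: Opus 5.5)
Your proposal is correct and follows essentially the same route as the paper: the paper also left-translates $T_eG$ by $\Tpart{2} m \circ (\id_G \times j_e)$, inverts via $\bigl(\pi_G, \Tpart{2} m \circ ((i\circ\pi_G) \times \id_{TG}) \circ \Delta_{TG}\bigr)$ factored through the pullback, uses that $\Tpart{2} m$ is a left $G$-action, and gets $R$-linearity from Proposition~\ref{prop:RCartTan}. The only difference is that you supply a justification of the action axioms through your composition lemma for partial tangents, whereas the paper simply asserts them.
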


\begin{proof}
Since the tangent functor is assumed to preserve products, $TG$ is a group object with multiplication
\begin{equation*}
  Tm \circ \chi_{G,G}^{-1}: 
  TG \times TG \longrightarrow TG
\end{equation*}
and identity $Te: * \to TG$, where we have used that $T* \cong *$ since $T$ preserves products. The inverse of the tangent group is the tangent $Ti: TG \to TG$ of the group inverse $i: G \to G$.

The partial tangent morphism of $m$ on the second factor,
\begin{equation*}
  \Tpart{2} m: G \times TG \longrightarrow TG
\end{equation*}
is a left group action of $G$ on $TG$. That is, it satisfies
\begin{subequations}
\begin{align}
  \Tpart{2} m \circ (\id_G \times \Tpart{2} m) 
  &= \Tpart{2} m \circ (m \times \id_{TG})
  \\
  \Tpart{2} m \circ (e \times \id_{TG})
  &= \id_{TG}
  \label{eq:T2mActB}
  \,.
\end{align}
\end{subequations}
The group action has an inverse given by $\Tpart{2} m \circ (i \times \id_{TG})$, which is invertible in the sense that
\begin{equation*}
\begin{split}
  {}&
  \Tpart{2} m \circ 
  (i \times \id_{TG}) \circ 
  (\id_G \times \Tpart{2} m) \circ 
  (\Delta_G \times \id_{TG})
  \\
  ={}&
  \Tpart{2} m \circ (\id_G \times \Tpart{2} m) \circ 
  \bigl( (i, \id_G) \times \id_{TG} \bigr)
  \\
  ={}&
  \Tpart{2} m \circ (m \times \id_TG) \circ 
  \bigl( (i, \id_G) \times \id_{TG} \bigr)
  \\
  ={}&
  \Tpart{2} m \circ \bigl( (e \circ \Term_G) \times \id_{TG} \bigr)
  \\
  ={}&
  \Term_G \times \id_{TG}
  \,,    
\end{split}
\end{equation*}
where $\Term_G: G \to *$ is the terminal morphism.

We can use the left action to left translate the fiber at the group identity to $TG$,
\begin{equation*}
  \phi_G:
  G \times T_e G \xrightarrow{~ \id_G \times j_e~} 
  G \times TG \xrightarrow{~\Tpart{2} m~}
  TG
  \,,
\end{equation*}
where $j_e: T_e G \to TG$ is the inclusion. We can left translate $TG$ back to the fiber at the group identity by 
\begin{equation*}
  \Val'_G:
  TG 
  \xrightarrow{~ \Delta_{TG} ~}
  TG \times TG
  \xrightarrow{~(i \circ \pi_G) \times \id_{TG}}
  G \times TG 
  \xrightarrow{~\Tpart{2} m~} TG
  \,.
\end{equation*}
We have
\begin{equation*}
\begin{split}
  \pi_G \circ \Val'_G
  &= 
  \pi_G \circ \Tpart{2} m \circ 
  \bigl((i \circ \pi_G) \times \id_{TG} \bigr) \circ
  \Delta_{TG}
  \\
  &= 
  m \circ (\id_G \times \pi_G) \circ 
  \bigl((i \circ \pi_G) \times \id_{TG} \bigr) \circ
  \Delta_{TG}
  \\
  &= 
  m \circ (i \times \id_G) \circ 
  (\pi_G \times \pi_G) \circ
  \Delta_{TG}
  \\
  &= 
  m \circ (i \times \id_G) \circ 
  \Delta_G \circ \pi_G
  \\
  &= 
  e \circ \Term_G \circ \pi_G
  \,,
\end{split}
\end{equation*}
From the universal property of the pullback $* \times_G^{e, \pi_G} TG$, it follows that $\Val'_G$ restricts on the codomain to a morphism 
\begin{equation}
\label{eq:ValGDef}
  \Val_G: TG \longrightarrow T_e G
  \,.
\end{equation}
Let 
\begin{equation*}
  \bar{\phi}_G \coloneqq (\pi_G \times \Val_G) \circ \Delta_{TG}:
  TG \to G \times T_e G
  \,.
\end{equation*}
We have
\begin{equation*}
\begin{split}
  \phi_G \circ \bar{\phi}_G
  &=
  \Tpart{2} m \circ (\id_G \times j_e) \circ 
  (\pi_G \times \Val_G) \circ 
  \Delta_{TG}
  \\
  &=
  \Tpart{2} m \circ 
  (\pi_G \times \Val'_G) \circ 
  \Delta_{TG}
  \\
  &=
  \Tpart{2} m \circ 
  \bigl( \pi_G \times (\Tpart{2} m \circ 
  ((i \circ \pi_G) \times \id_{TG}) \circ \Delta_{TG}) 
  \bigr) \circ 
  \Delta_{TG}
  \\
  &=
  \Tpart{2} m \circ (\id_G \times \Tpart{2} m) \circ 
  \bigl( \id_G \times i \times \id_{TG} \bigr) \circ 
  (\pi_G \times \pi_G \times \id_{TG}) \circ 
  \Delta^2_{TG}
  \\
  &=
  \Tpart{2} m \circ (m \times \id_{TG}) \circ 
  \bigl( \id_G \times i \times \id_{TG} \bigr) \circ 
  (\pi_G \times \pi_G \times \id_{TG}) \circ 
  \Delta^2_{TG}
  \\
  &=
  \Tpart{2} m \circ \bigl(
  (m \circ (\id_G \times i) \circ \Delta_G) 
  \times \id_{TG}) \circ \Delta_{TG}
  \\
  &=
  \Tpart{2} m \circ \bigl(
  (e \circ \Term_G) 
  \times \id_{TG}) \circ \Delta_{TG}
  \\
  &=
  \id_{TG}
  \,,
\end{split}
\end{equation*}
where $\Delta^2 = (\Id\circ \Delta) \circ \Delta = (\Delta \circ \Id) \circ \Delta$ and where we have used that $\Tpart{2} m$ is a left group action. With an analogous computation we show that $\bar{\phi}_G \circ \phi_G = \id_G \times \id_{T_e G}$. We conclude that $\bar{\phi}_G$ is the inverse of $\phi_G$.

It follows from Proposition~\ref{prop:RCartTan} that $\Tpart{2} m$ is a morphism of bundles of $R$-modules over $G$. By definition of the $R$-module structure on $T_e G$, the inclusion $j_e: T_e G \to TG$ is a morphism of bundles of $R$-modules. It follows that $\phi_G$ is a morphism of bundles of $R$-modules over $G$. This implies that the inverse of $\phi_G$ is a morphism of bundles of $R$-modules.
\end{proof}

The tangent group structure can be expressed in terms of the trivialized bundle $G \times \frakg$ in the usual way. First we observe that we also have a right action of $G$ on $TG$ given by $\Tpart{1} m: TG \times G \to TG$. The left and right action commute, since
\begin{equation*}
\begin{split}
  \Tpart{1} m \circ (\Tpart{2} m \times \id_G)
  &=
  Tm \circ 
  (Tm \times \id_G) \circ
  \chi_{G,G,G}^{-1} \circ 
  (0_G \times \id_{TG} \times \id_G)
  \\
  &=
  Tm \circ 
  (Tm \times \id_{TG}) \circ
  \chi_{G,G,G}^{-1} \circ 
  (0_G \times \id_{TG} \times 0_G)
  \\
  &=
  Tm \circ 
  (\id_{TG} \times Tm) \circ
  \chi_{G,G,G}^{-1} \circ 
  (0_G \times \id_{TG} \times 0_G)
  \\
  &=
  \Tpart{2} m \circ (\id_G \times \Tpart{1} m)
  \,.
\end{split}
\end{equation*}
The left adjoint action of $G$ on $TG$ is given by
\begin{equation*}
  \Ad: G \times TG 
  \xrightarrow{~(\pr_1, \pr_2, i \circ \pr_1)~}
  G \times TG \times G \xrightarrow{~\Tpart{2} m \times \id_G~}
  TG \times G \xrightarrow{~\Tpart{1} m~}
  TG
  \,.
\end{equation*}
It is straightforward to check that the adjoint action restricts to $T_e G$, that is, there is a commutative diagram
\begin{equation}
\begin{tikzcd}
G \times T_e G
\ar[r, "\Ad_e"]
\ar[d, "\id_G \times j_e"']
&
T_e G
\ar[d, "j_e"]
\\
G \times TG
\ar[r, "\Ad"']
&
TG
\end{tikzcd}
\end{equation}
which equips $\frakg = T_e G$ with a left $G$-action. Consider the affine transformation
\begin{align*}
  \alpha: 
  (G \times \frakg) \times \frakg
  &\longrightarrow \frakg
  \\
  \alpha 
  &\coloneqq 
  +_e \circ
  (\Ad_e \times \id_\frakg) \circ 
  (\pr_1, \pr_3, \pr_2)
\end{align*}
Using the adjoint action, we can equip $G \times \frakg$ with the group structure of the semidirect product
\begin{align*}
  (G \times \frakg) \times (G \times \frakg) 
  &\xrightarrow{~m_{G \ltimes \frakg}~} G \times \frakg
  \\
  * 
  &\xrightarrow{\:~ e_{G \ltimes \frakg} ~\:} 
  G \times \frakg
  \,,
\end{align*}
which is defined as
\begin{align*}
  m_{G \ltimes \frakg}
  &\coloneqq
  (\id_G \times +_e) \circ
  (m \times \Ad_e \times \id_\frakg ) \circ
  (\pr_1, \pr_3, \pr_1, \pr_4, \pr_2)
  \\
  e_{G \ltimes \frakg}
  &\coloneqq (e, 0_e)
  \,.
\end{align*}
We denote this group by $G \ltimes \frakg$.

\begin{Proposition}
\label{prop:TGsemidirect}
Let $(G, m, e)$ be a group object in $\calC$. The trivialization of Proposition~\ref{prop:TGtrivial} is an isomorphism
\begin{equation*}
  TG \cong G \ltimes \frakg
\end{equation*}
of groups.
\end{Proposition}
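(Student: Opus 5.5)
Since $\phi_G: G \times \frakg \to TG$ is already known to be an isomorphism (Proposition~\ref{prop:TGtrivial}), it suffices to show that $\phi_G$ is a group homomorphism:
\begin{equation*}
  Tm \circ \chi_{G,G}^{-1} \circ (\phi_G \times \phi_G) = \phi_G \circ m_{G \ltimes \frakg}
  \,,
  \qquad
  \phi_G \circ e_{G \ltimes \frakg} = Te
  \,.
\end{equation*}
Its inverse is then automatically a group homomorphism. For the unit, $\phi_G \circ (e, 0_e) = \Tpart{2} m \circ (e, j_e \circ 0_e) = \Tpart{2} m \circ (e, 0_G \circ e) = Tm \circ \chi^{-1}_{G,G} \circ (0_G \circ e, 0_G \circ e)$. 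This equals $Tm \circ 0_{G\times G} \circ (e,e) = 0_G \circ m \circ (e,e) = 0_G \circ e = Te$, using the naturality of $0$ and the fact that $T* \cong *$ forces $Te = 0_G \circ e$.

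For the multiplication, the plan is to decompose the tangent multiplication using Proposition~\ref{prop:TSumPartials}:
\begin{equation*}
  Tm \circ \chi^{-1}_{G,G} = +_G \circ \bigl(\Tpart{1} m \circ (\id_{TG} \times \pi_G), \Tpart{2} m \circ (\pi_G \times \id_{TG})\bigr)
  \,.
\end{equation*}
We evaluate this on $(\phi_G(g,\xi), \phi_G(h,\eta))$, writing informally $g\cdot v = \Tpart{2}m(g,v)$ and $v \cdot h = \Tpart{1} m(v,h)$. Since $\pi_G \circ \phi_G = \pr_1$, the result is $(g\cdot\xi)\cdot h + g\cdot (h \cdot \eta)$. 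By the associativity of the left action, the second summand is $(gh)\cdot \eta$. For the first summand, we insert $h h^{-1}$ on the left using the left action axioms, so that $(g\cdot\xi)\cdot h = (gh)\cdot(h^{-1}\cdot \xi \cdot h) = (gh)\cdot \Ad_e(h^{-1},\xi)$. Here we use that the left and right actions commute, which was shown above, and that the left action is compatible with $m$. Finally, since $\Tpart{2} m$ is fiberwise additive (diagram~\eqref{eq:T2linear}) and $j_e$ intertwines $+_e$ with $+_G$ (diagram~\eqref{eq:FiberRMod1}), the sum is $\phi_G\bigl(gh, \Ad_e(h^{-1},\xi) + \eta\bigr)$.

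There is an issue of conventions. This computation gives the product $(gh, \Ad_{h^{-1}}\xi + \eta)$, which is the standard left-trivialized semidirect product. The definition of $m_{G\ltimes\frakg}$ given above uses $\Ad_e$ applied to $(\pr_1,\pr_4) = (g,\eta)$, that is $(gh, \Ad_g \eta + \xi)$, which is the right-trivialized form. So I expect that the group isomorphism should really be stated with the trivialization by \emph{right} translation, $\psi_G = \Tpart{1} m \circ (j_e \times \id_G) \circ (\pr_2,\pr_1)$. For $\psi_G$ the same computation yields $\xi\cdot g + (g\cdot \eta)\cdot h\ldots$ with the adjoint appearing on the second factor. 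Alternatively, one can compose with the isomorphism $G\times\frakg \to G\times \frakg$, $(g,\xi)\mapsto (g,\Ad_e(g,\xi))$, which relates the two trivializations and intertwines the two semidirect product formulas. I would carry out the proof with $\phi_G$ and then transport along this map, checking the formula on generalized elements (morphisms $S \to G\times\frakg$), where everything reduces to the above manipulations.

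The main obstacle is making the element-style steps rigorous internally. In particular, the step $(g\cdot\xi)\cdot h = (gh)\cdot(h^{-1}\cdot\xi\cdot h)$ requires the mixed associativity $\Tpart{2} m \circ (\id \times \Tpart{1} m) = \Tpart{1} m \circ (\Tpart{2} m \times \id)$ together with $\Tpart{1}$-associativity. These follow from the functoriality~\eqref{eq:T2functorial} applied to the associativity square of $m$ and from $T$ preserving products, which lets $Tm$ be used as an associative tangent group law. Working with generalized elements $S \to G$, $S \to T_e G$ throughout, all remaining identities are routine.
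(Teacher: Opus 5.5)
Your proposal is correct, and it takes a more explicit route than the paper. The paper's proof only shows, via Proposition~\ref{prop:TSumPartials} and \eqref{eq:T2mActB}, that $Tm \circ \chi_{G,G}^{-1}$ restricts on $\frakg \times \frakg$ to $+_e$. It then appeals to the split short exact sequence $\frakg \to TG \to G$, with splitting $g \mapsto \phi_G(g,0) = 0_G \circ g$, and concludes that $TG$ is a semidirect product. It never checks which explicit map realizes the isomorphism or what the resulting product formula is.

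You instead evaluate $Tm \circ \chi_{G,G}^{-1}$ on the image of $\phi_G$ for arbitrary generalized elements. For this you use the left action, the commutation of left and right actions, and the fiberwise linearity \eqref{eq:T2linear}. The paper's key computation is the special case $g = h = e$ of yours, and your argument proves the isomorphism directly instead of invoking the split-extension principle.

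Your route also catches a real problem. The left trivialization $\phi_G$ intertwines $Tm \circ \chi_{G,G}^{-1}$ with the product $(gh, \Ad_{h^{-1}}\xi + \eta)$. The paper's $m_{G\ltimes\frakg}$ is instead $(gh, \xi + \Ad_g \eta)$. So the statement, read literally, fails for nonabelian $G$: already for $G = GL_n(\bbR)$ and $\xi = 0$ one would need $g\eta g^{-1} = \eta$. The paper's abstract argument hides this, because the standard isomorphism attached to the split extension, $(g,\xi) \mapsto \xi \cdot 0_g$, is right translation. Right translation does give the paper's formula.

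Your fix is the right one. Transporting along $\Psi(g,\xi) = (g, \Ad_e(g,\xi))$ makes the group isomorphism $\phi_G \circ \Psi^{-1}$, which is exactly the right trivialization $\psi_G$. This needs only that $\Ad_e$ is an action of $G$ by additive automorphisms of $\frakg$, which follows from the same action and linearity identities you already use.

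One slip: in your one-line formula for $\psi_G$ the first summand should be $(\xi \cdot g)\cdot h = \xi \cdot (gh)$, not $\xi \cdot g$. Since your main line goes through $\Psi$, this does not affect the argument.
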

\begin{proof}
Using~\eqref{eq:T2mActB}, we obtain
\begin{equation*}
\begin{split}
  \Tpart{2} m \circ 
  (\pi_G \times \id_{TG}) \circ
  (j_e \times j_e)
  &= 
  \Tpart{2} m \circ 
  (e \times \id_{TG}) \circ \pr_2 \circ
  (j_e \times j_e)
  \\
  &= 
  \pr_2 \circ (j_e \times j_e)
  \,.    
\end{split}
\end{equation*}
Analogously,
\begin{equation*}
  \Tpart{1} m \circ 
  (\id_{TG} \times \pi_G) \circ
  (j_e \times j_e)
  = 
  \pr_1 \circ (j_e \times j_e)
  \,.
\end{equation*}
Using Proposition~\ref{prop:TSumPartials}, we obtain
\begin{equation*}
\begin{split}
  Tm \circ 
  \chi_{G,G}^{-1} \circ 
  (j_e \times j_e)
  &=
  +_G \circ \bigl(
    \Tpart{1} m \circ (\id_G \times \pi_G),
    \Tpart{2} m \circ (\pi_G \times \id_G)
  \bigr) \circ (j_e \times j_e)
  \\
  &= +_G \circ \bigl( 
  \pr_1 \circ (j_e \times j_e),
  \pr_2 \circ (j_e \times j_e) \bigr)
  \\
  &= +_G \circ (j_e \times j_e)
  \\
  &= j_e \circ +_e
  \,,
\end{split}    
\end{equation*}
where we have used the first diagram of~\eqref{eq:FiberRMod1} that defines $+_e$. In terms of a diagram, this means that 
\begin{equation*}
\begin{tikzcd}[column sep=large]
T_e G \times T_e G
\ar[r, "+_e"]
\ar[d, "j_e \times j_e"']
&
T_e G
\ar[d, "j_e"]
\\
TG \times TG 
\ar[r, "Tm \circ \chi_{G,G}^{-1}"']
&
TG
\end{tikzcd}
\end{equation*}
commutes. In other words, the addition of $T_e G = \frakg$ is the tangent map of the group multiplication at the identity. This implies that we have the short exact sequence of groups
\begin{equation*}
  \frakg \longrightarrow TG \longrightarrow G \longrightarrow 0
  \,,
\end{equation*}
which is right split by the trivialization $G \to G \times \frakg \xrightarrow{\cong} TG$.
\end{proof}

\begin{Corollary}
\label{cor:TAproduct}
Let $(A,\hat{+}, \hat{0})$ be an abelian group object in $\calC$. Then $TA$ is isomorphic to the product group $A \times T_{\hat{0}}A$.
\end{Corollary}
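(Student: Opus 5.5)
The plan is to specialize Propositions~\ref{prop:TGtrivial} and~\ref{prop:TGsemidirect} to the group $(A,\hat{+},\hat{0})$ with $e = \hat{0}$ and $\frakg = T_{\hat 0}A$. Implicitly we assume, as in those propositions, that the pullback defining $T_{\hat 0}A$ exists. Proposition~\ref{prop:TGsemidirect} then gives a group isomorphism $TA \cong A \ltimes T_{\hat 0}A$. Its multiplication is built from $\hat{+}$ on $A$, the restricted adjoint action $\Ad_e$, and $+_e$ on the fiber. So it suffices to show that the adjoint action is trivial when $A$ is abelian, that is,
\begin{equation*}
  \Ad_e = \pr_2 : A \times T_{\hat 0}A \longrightarrow T_{\hat 0}A \,.
\end{equation*}
Once this holds, $m_{A \ltimes \frakg}$ reduces to $(\hat{+} \times +_e)\circ(\pr_1,\pr_3,\pr_2,\pr_4)$, which is exactly the product group structure.

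To prove triviality of $\Ad$, we use commutativity in the form $\hat{+} \circ \sigma = \hat{+}$, where $\sigma$ is the swap on $A\times A$. Naturality of $\chi$ gives
\begin{equation*}
  \Tpart{1}\hat{+} = \Tpart{2}\hat{+} \circ \sigma_{TA,A} \,,
\end{equation*}
so the right and left actions of $A$ on $TA$ coincide after swapping the arguments. We then insert this into the definition of $\Ad$. This turns $\Ad$ into a composite of two left translations, one by $a$ and one by $i(a)$. By associativity of the action (the first identity of the left-action property in the proof of Proposition~\ref{prop:TGtrivial}), these combine into the translation by $\hat{+}\circ(a, i(a)) = \hat 0$. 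By~\eqref{eq:T2mActB} that translation is the identity, so $\Ad = \pr_2$ on $A\times TA$, and hence $\Ad_e = \pr_2$ after restricting along $j_e$.

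The main obstacle is the bookkeeping in the second step: we must track the argument permutations in $(\pr_1,\pr_2,i\circ\pr_1)$ and check that the rewriting via $\sigma$ really yields $\Tpart{2}\hat{+}\circ(\id_A\times\Tpart{2}\hat{+})$ applied to $(i\circ\pr_1,\pr_1,\pr_2)$, or to the equivalent ordering allowed by commutativity. If this becomes messy, the fallback is to work directly with Proposition~\ref{prop:TSumPartials} and the commutativity $T\hat{+}\circ\chi^{-1}\circ\sigma = T\hat{+}\circ\chi^{-1}$. That identity shows $TA$ is an abelian group, and for an abelian group the semidirect product on a split extension whose section is a homomorphism is automatically direct, since conjugation is trivial.
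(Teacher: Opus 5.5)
Your proposal is correct and follows the argument the paper intends: the corollary has no separate proof and is an immediate consequence of Proposition~\ref{prop:TGsemidirect}, the point being that for abelian $A$ the adjoint action $\Ad_e$ is trivial, so $A \ltimes T_{\hat 0}A$ is the direct product. Your derivation of $\Ad = \pr_2$ from $\Tpart{1}\hat{+} = \Tpart{2}\hat{+}\circ\sigma_{TA,A}$ and the left-action identities is sound. One small bookkeeping point remains: the reduced multiplication has fiber component $+_e\circ(\pr_4,\pr_2)$, which equals $+_e\circ(\pr_2,\pr_4)$ because $+_e$ is commutative.
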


\subsection{Tangent-stable modules}

Throughout this section, we assume that $\calC$ is a cartesian tangent category with an $R$-module structure. 

\begin{Definition}
\label{def:ModTangentStable}
An $R$-module $A$ in $\calC$ will be called \textbf{tangent-stable} if there is an isomorphism of $R$-modules
\begin{equation*}
  T_{\hat{0}} A \cong A
  \,,
\end{equation*}
where $\hat{0}: * \to A$ is the zero of the module.
\end{Definition}

\begin{Proposition}
An $R$-module $A$ in $\calC$ is tangent-stable if and only if its tangent bundle has a trivialization
\begin{equation*}
  TA \cong A \times A
  \,.
\end{equation*}
\end{Proposition}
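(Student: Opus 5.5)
The plan is to reduce both directions to the trivialization of Proposition~\ref{prop:TGtrivial}, applied to the underlying abelian group $(A,\hat{+},\hat{0})$ of the $R$-module $A$, viewed as a group object in $\calC$. A trivialization here means, as defined above, an isomorphism of bundles of $R$-modules over $A$ of the form $TA \cong A \times B$ with $B$ an $R$-module in $\calC$, and in our case $B = A$.

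For the forward direction, assume $\psi: T_{\hat{0}} A \to A$ is an isomorphism of $R$-modules. One subtlety is that Proposition~\ref{prop:TGtrivial} assumes that the pullback $T_{\hat{0}}A$ exists; tangent-stability presupposes this, since the definition refers to $T_{\hat 0}A$. Proposition~\ref{prop:TGtrivial} then gives an isomorphism of bundles of $R$-modules
\begin{equation*}
  \phi_A: A \times T_{\hat{0}} A \longrightarrow TA \,.
\end{equation*}
Composing its inverse with $\id_A \times \psi$ gives an isomorphism $TA \to A \times A$. This composite is still an isomorphism of bundles of $R$-modules over $A$, because $\id_A \times \psi$ is a morphism of trivial bundles of $R$-modules covering $\id_A$ (Example~\ref{ex:BundlesTalgExamples1}(v)).

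For the converse, let $\Phi: TA \to A \times A$ be an isomorphism of bundles of $R$-modules over $A$. Taking fibers over the point $\hat{0}: * \to A$ should give an isomorphism of $R$-modules
\begin{equation*}
  T_{\hat 0}A = * \times_A^{\hat 0,\pi_A} TA \;\cong\; * \times_A^{\hat 0,\pr_1} (A \times A) \cong A \,,
\end{equation*}
as the paper already notes after the definition of trivialization. The steps, in order, are:
\begin{itemize}
\item[(1)] The pullback on the right exists and equals $A$ with inclusion $(\hat{0}\circ\Term_A, \id_A)$.
\item[(2)] Since $\Phi$ is an isomorphism over $A$, the pullback on the left exists, so $T_{\hat 0}A$ is defined, and $\Phi$ induces an isomorphism between the two fibers.
\item[(3)] This induced isomorphism intertwines $+_{\hat 0}$, $0_{\hat 0}$ and $\kappa_{\hat 0}$ with the module structure of $A$.
\end{itemize}
Step (3) follows from the universal property of the pullback: the fiber structure maps were characterized by diagrams~\eqref{eq:FiberRMod1} and the analogous diagram for $\kappa$, and $j$ is a monomorphism, being the pullback of a monomorphism $*\to A$ (a section of $A \to *$).

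I expect the main obstacle to be bookkeeping rather than substance. The first point is to check carefully that the fiber of the trivial bundle $A\times A$ carries exactly the $R$-module structure of $A$, under the trivial-bundle structure of Example~\ref{ex:BundlesTalgExamples1}(v). The second is to make precise that a pullback transported along an isomorphism over the base is again a pullback. I would handle the first by writing the fiber inclusion $A \to A\times A$ explicitly and checking the three squares componentwise with projections. Both directions are natural and need no further input from the tangent axioms, since Proposition~\ref{prop:TGtrivial} already did the work of showing $R$-linearity via Proposition~\ref{prop:RCartTan}.
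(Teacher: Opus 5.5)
Your proposal is correct and follows the paper's route. The paper's proof is the one-line remark that Proposition~\ref{prop:TGtrivial} applies to the underlying abelian group of $A$; the converse rests implicitly on the earlier observation that taking the fiber of a trivialization at a point gives an isomorphism of $R$-modules, exactly as in your two directions. Your extra bookkeeping only makes explicit details the paper leaves unstated: the existence of $T_{\hat 0}A$ transported along $\Phi$, and the inclusion $j$ being a monomorphism.
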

\begin{proof}
The statement follows from Proposition~\ref{prop:TGtrivial} applied to the underlying abelian group of $A$.
\end{proof}

Let $A$ be a tangent-stable $R$-module. Recall from Equation~\eqref{eq:ValGDef} that
\begin{equation*}
  \Val_A: TA \longrightarrow A
\end{equation*}
denotes the projection onto the fiber of $TA \cong A \times A$. 

\begin{Proposition}
\label{prop:TangStabDiffObj}
If an $R$-module $A$ is tangent-stable, then the diagrams
\begin{equation}
\label{eq:DiffObj1}
\begin{tikzcd}[column sep=large]
TA \times_A TA
\ar[r, "+_A"]
\ar[d, "{(\pr_1, \pr_2)}"']
&
TA
\ar[dd, "\Val_A"]
\\
TA \times TA
\ar[d, "\Val_A \times \Val_A"']
&
\\
A \times A
\ar[r, "\Aadd"']
&
A
\end{tikzcd}
\qquad\qquad
\begin{tikzcd}
A
\ar[r, "0_A"]
\ar[d]
&
TA
\ar[d, "\Val_A"]
\\
*
\ar[r, "\Azero"']
&
A
\end{tikzcd}
\end{equation}
\begin{equation}
\label{eq:DiffObj2}
\begin{tikzcd}[column sep=large]
T(A \times A)
\ar[r, "T\Aadd"]
\ar[d, "\chi_{A,A}"']
&
TA
\ar[dd, "\Val_A"]
\\
TA \times TA
\ar[d, "\Val_A \times \Val_A"']
\\
A \times A
\ar[r, "\Aadd"']
&
A
\end{tikzcd}
\qquad\qquad
\begin{tikzcd}
T*
\ar[r, "T\Azero"]
\ar[d]
&
TA
\ar[d, "\Val_A"]
\\
*
\ar[r, "\Azero"']
&
A
\end{tikzcd}
\end{equation}
\begin{equation}
\label{eq:DiffObj3}
\begin{tikzcd}[column sep=large]
TA 
\ar[r, "\lambda_A"]
\ar[d, "\Val_A"']
&
T^2 A
\ar[d, "T\Val_A"]
\\
A
&
TA
\ar[l, "\Val_A"]
\end{tikzcd}
\end{equation}
commute. 
\end{Proposition}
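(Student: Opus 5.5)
Throughout I work with the explicit formula for $\Val_A$ from the proof of Proposition~\ref{prop:TGtrivial}, applied to the underlying abelian group $(A,\Aadd,\Azero)$ with inverse $\Aminus$:
\begin{equation*}
  j_{\Azero} \circ \Val_A = \Tpart{2}\Aadd \circ \bigl((\Aminus \circ \pi_A)\times \id_{TA}\bigr)\circ \Delta_{TA}.
\end{equation*}
I compose with the tangent-stability isomorphism $T_{\Azero}A \cong A$ and suppress it from notation. Since $j_{\Azero}$ is a monomorphism (it is a pullback of the monomorphism $\Azero:*\to A$), it suffices to verify every identity after composing with $j_{\Azero}$. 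Equivalently, I can use that $\bar\phi_A=(\pi_A,\Val_A)$ is an isomorphism of bundles of $R$-modules over $A$ (Proposition~\ref{prop:TGtrivial}) and transport everything to the trivial bundle $A\times A\to A$.

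I would do the easy diagrams first. Since $\bar\phi_A$ is a morphism of bundles of abelian groups, it intertwines $+_A$ and $0_A$ with the fiberwise addition and zero of $A\times A$. This gives both diagrams of~\eqref{eq:DiffObj1} at once: the fiber component of $\bar\phi_A\circ +_A$ is $\Aadd\circ(\Val_A\times\Val_A)$, and $\Val_A\circ 0_A=\Azero\circ\Term_A$.

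For the right diagram of~\eqref{eq:DiffObj2}, $T*\cong *$ and $T\Azero$ is the identity of the group $TA$. Since $\bar\phi_A$ is a group isomorphism onto $A\ltimes\frakg$ (Proposition~\ref{prop:TGsemidirect}), it sends this identity to $(\Azero,0_{\Azero})$, so $\Val_A\circ T\Azero=\Azero$. For the left diagram of~\eqref{eq:DiffObj2}, Proposition~\ref{prop:TGsemidirect} says $\bar\phi_A$ is a group isomorphism $TA\cong A\ltimes\frakg$. Since $A$ is abelian, $\Ad_e$ is trivial; this follows from the commutativity of $\Tpart1\Aadd$ and $\Tpart2\Aadd$ together with $\Aadd=\Aadd\circ\mathrm{swap}$. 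Hence the semidirect product is the direct product $A\times A$ (Corollary~\ref{cor:TAproduct}). Projecting the multiplication $T\Aadd\circ\chi^{-1}_{A,A}$ to the second factor gives exactly $\Val_A\circ T\Aadd=\Aadd\circ(\Val_A\times\Val_A)\circ\chi_{A,A}$.

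The main obstacle is~\eqref{eq:DiffObj3}, $\Val_A\circ T\Val_A\circ\lambda_A=\Val_A$, because it involves $T$ applied to $\Val_A$ and the vertical lift. My plan has three steps.

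\begin{itemize}
\item[(a)] By naturality of $\lambda$ along $\bar\phi_A$, reduce to the case where $TA$ is identified with $A\times A$ and $\Val_A=\pr_2$. Concretely, $T\Val_A\circ\lambda_A = T\pr_2\circ T\bar\phi_A\circ\lambda_A = T\pr_2\circ\lambda_{A\times A}\circ\ldots$. This requires the trivialization to intertwine $\lambda$; naturality only gives $\lambda_{A\times A}\circ T\bar\phi_A$, so I instead write $T\Val_A = T\pr_2\circ T\bar\phi_A$.
\item[(b)] Use Proposition~\ref{prop:CartTanPreserve}, which says $\chi$ intertwines $\lambda$, to get $T\pr_2\circ\lambda_{A\times A}=\lambda_A\circ\pr_2$ on $T(A\times A)$. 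Combined with naturality $T^2\Val_A\circ\lambda_{TA}=\lambda_A\circ T\Val_A$, this reduces the claim to showing $\Val_A\circ T\Val_A\circ\lambda_A = \Val_A$ by expressing $T\Val_A$ through the formula above. Applying $T$ to the formula gives $T\Tpart2\Aadd\circ T((\Aminus\circ\pi_A)\times\id)\circ T\Delta$. Precomposing with $\lambda_A$ and using $T\pi_A\circ\lambda_A = 0_A\circ\pi_A$ (from~\eqref{eq:TanFun3}), the first component becomes the zero vector at $-\pi_A$.
\item[(c)] By linearity of $Tf$ in the zero-section direction (Proposition~\ref{prop:TSumPartials}), $T\Val_A\circ\lambda_A$ then equals $T(\Tpart2\Aadd)$ applied to a zero vector in the first slot and $\lambda_A$ in the second. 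This is a vertical lift of the translated vector, i.e.\ $\lambda_A\circ j_{\Azero}\circ\Val_A$, using naturality of $\lambda$ for the translation $\Aadd(-a,\cdot)$.
\end{itemize}

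Finally, $\Val_A\circ\lambda_A\circ j_{\Azero}$ has to be identified with the identity on $T_{\Azero}A\cong A$. This is exactly where tangent-stability enters: the isomorphism $T_{\Azero}A\cong A$ must be the one for which the vertical lift restricted to $T_{\Azero}A$ followed by $\Val_A$ is the identity. I expect this last identification to be the delicate point. If the chosen isomorphism is arbitrary, I would instead prove the statement with $\Val_A$ defined using $T_{\Azero}A$, where it follows from the pullback~\eqref{eq:TanFun4} characterizing $\lambda$ on the fiber over $\Azero$.
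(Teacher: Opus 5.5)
\textbf{Verdict.} Your treatment of \eqref{eq:DiffObj1} and \eqref{eq:DiffObj2} is the paper's argument (Proposition~\ref{prop:TGtrivial}, and Corollary~\ref{cor:TAproduct} via Proposition~\ref{prop:TGsemidirect}), and it is fine. These four diagrams hold for \emph{every} $R$-module isomorphism $\psi\colon T_{\Azero}A\to A$ used to define $\Val_A$. They only assert additivity of $\Val_A$, including preservation of zero, and this survives post-composition with an additive isomorphism.

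\textbf{Where it breaks.} The gap is in \eqref{eq:DiffObj3}, and it cannot be closed as the statement stands, because that diagram depends on $\psi$. Write $\Val_A=\psi\circ\Val^0_A$, where $\Val^0_A\colon TA\to T_{\Azero}A$ is the map \eqref{eq:ValGDef}; note $\Val^0_A\circ j_{\Azero}=\id$. The correctly typed version of your step (c) is
\begin{equation*}
  T(j_{\Azero}\circ\Val^0_A)\circ\lambda_A=\lambda_A\circ j_{\Azero}\circ\Val^0_A\,.
\end{equation*}
This does hold. Use the $\lambda$-compatibility of $\chi$ from Proposition~\ref{prop:CartTanPreserve}, naturality of $0$ and $\lambda$, $\lambda_A\circ 0_A=0_{TA}\circ 0_A$, and $T\pi_A\circ\lambda_A=0_A\circ\pi_A$; the last comes from \eqref{eq:TanFun4}, not \eqref{eq:TanFun3}. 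It gives $T\Val^0_A\circ\lambda_A=j'\circ\varepsilon\circ\Val^0_A$. Here $j'$ is the inclusion of the fiber $T_{0_{\Azero}}(T_{\Azero}A)$, and $\varepsilon\colon T_{\Azero}A\to T_{0_{\Azero}}(T_{\Azero}A)$ is the isomorphism coming from \eqref{eq:TanFun4}, as in the proof of Proposition~\ref{prop:frakgTanStable}. Let $\psi_*\colon T_{0_{\Azero}}(T_{\Azero}A)\to T_{\Azero}A$ be the map induced by $T\psi$ on fibers over the zeros. Then
\begin{equation*}
  \Val_A\circ T\Val_A\circ\lambda_A=\psi\circ\psi_*\circ\varepsilon\circ\Val^0_A\,,
\end{equation*}
so \eqref{eq:DiffObj3} holds if and only if $\psi_*\circ\varepsilon=\id$.

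\textbf{Counterexample.} Nothing in Definition~\ref{def:ModTangentStable} enforces this condition. In $\Mfld$ with $A=R=\bbR$, the map $\psi(v)=2v$ is an $R$-module isomorphism $T_{0}\bbR\cong\bbR$. Then $\Val_A(x,v)=2v$ and $T\Val_A\circ\lambda_A(x,v)=(0,2v)$, so $\Val_A\circ T\Val_A\circ\lambda_A(x,v)=4v\neq 2v$.

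\textbf{Assessment of your last step.} Your suspicion at the end is correct. However, the condition you write down (that $\Val_A\circ\lambda_A\circ j_{\Azero}$ be the identity) is not well-typed as written. Your fallback proves a different statement, about $T_{\Azero}A$ identified with its own tangent fiber via $\varepsilon$. What is needed is to make $\psi_*\circ\varepsilon=\id$ part of the data of the trivialization; the displayed identity then finishes the proof at once.

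\textbf{The paper's proof.} The paper's one-line justification, that \eqref{eq:DiffObj3} follows from the pullback \eqref{eq:TanFun4}, has the same hole: the pullback produces $\varepsilon$ but does not constrain $\psi$. For $A=R$ in $\Mfld$, the choice $\psi(v)=2v$ also violates \eqref{diag:ScalarMult2}, so there the scalar multiplication axioms do force the normalization. The proposition, however, is stated for arbitrary tangent-stable modules.

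\textbf{Steps (a)--(b).} These should simply be dropped. The expression ``$T\pr_2\circ\lambda_{A\times A}$'' does not compose, and the claim you reduce to in (b) is the original one.
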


\begin{proof}
It follows from Proposition~\ref{prop:TGtrivial} that the diagrams~\eqref{eq:DiffObj1}
commute. From Corollary~\ref{cor:TAproduct} it follows that the diagrams~\eqref{eq:DiffObj2} commute. From the property that diagram~\eqref{eq:TanFun4} is a pullback in any tangent category, it follows that diagram~\eqref{eq:DiffObj3} commutes.
\end{proof}

\begin{Remark}
In \cite[Def.~3.1]{CockettCruttwell:2018}, a differential object in a tangent category was defined to be a commutative monoid $A$ such that there is an isomorphism $TA \cong A \times A$ and such that the diagrams~\eqref{eq:DiffObj1}, \eqref{eq:DiffObj2}, and \eqref{eq:DiffObj3} commute. In the earlier paper \cite[Def.~4.8]{CockettCruttwell:2014} of the same authors, the second diagram of \eqref{eq:DiffObj2} was shown to be redundant \cite[Lem.~4.9]{CockettCruttwell:2014}. Proposition~\ref{prop:TangStabDiffObj} shows that an abelian group object is a differential object if and only if it is tangent-stable.
\end{Remark}

\begin{Proposition}
\label{prop:frakgTanStable}
The tangent fibers of a group object are tangent-stable.
\end{Proposition}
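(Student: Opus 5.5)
The plan is to compute the double tangent fiber $T_{0_e}(TG)$ in two different ways and compare the results; here $0_e \coloneqq 0_G \circ e: * \to TG$ is the zero vector at the identity. Throughout we use that pullbacks commute with products, so that the fiber of a product bundle over a product point is the product of the fibers.

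\emph{First computation, via the trivialization.} Proposition~\ref{prop:TGtrivial} gives an isomorphism of bundles $\phi_G: G \times \frakg \to TG$. Applying $T$ and using that $T$ preserves products, we get
\begin{equation*}
  T(TG) \cong T(G \times \frakg) \cong TG \times T\frakg \,.
\end{equation*}
This is an isomorphism over $TG \cong G \times \frakg$ with respect to $\pi_{TG}$ and $\pi_G \times \pi_\frakg$. The point $0_e$ corresponds to $(e, 0_e)$, where $0_e$ in the second slot denotes the zero of $\frakg$. Taking fibers therefore gives
\begin{equation*}
  T_{0_e}(TG) \cong T_e G \times T_{\hat 0}\frakg = \frakg \times T_{\hat 0}\frakg \,.
\end{equation*}
Under this identification, the morphism $T\pi_G$, restricted to this fiber, becomes the projection onto the first factor $\frakg = T_eG$ (after including $\frakg$ into $TG$ via $j_e$). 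This holds because $\pi_G \circ \phi_G = \pr_1$ and $T\pr_1$ projects onto the $TG$ factor.

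\emph{Second computation, via the vertical lift.} Since $\pi T \circ \tau = T\pi$ by~\eqref{eq:TanFun2}, the symmetric structure $\tau_G$ identifies $\{\xi : \pi_{TG}\xi = 0_e\}$ with $\{\xi : T\pi_G \xi = 0_e\}$. By the pointwise pullback~\eqref{diag:lambda2pullback}, the latter set is $* \times_G T_2 G$ via $\lambda_2$. Pasting pullbacks, this is $T_eG \times T_eG = \frakg \times \frakg$, so that
\begin{equation*}
  \frakg \times \frakg \cong T_{0_e}(TG) \,.
\end{equation*}
I will then check that under this isomorphism the map $T\pi_G$ (equivalently $\pi_{TG}$ after $\tau$) corresponds to the projection onto one factor of $\frakg \times \frakg$. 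This is the first component of $\lambda_2$ modulo $\tau$, which can be read off from~\eqref{eq:VertLiftExt} together with~\eqref{eq:TanFun3}.

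\emph{Comparison.} Composing the two identifications gives an isomorphism $\frakg \times \frakg \cong \frakg \times T_{\hat 0}\frakg$ over $\frakg$. Taking fibers over $\hat 0 = 0_e$ then yields an isomorphism $\frakg \cong T_{\hat 0}\frakg$. It is important that we pass to fibers here rather than trying to cancel a factor of $\frakg$, which would not be justified.

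The main obstacle is showing that this isomorphism is $R$-linear and additive, rather than merely an isomorphism of objects. Additivity should follow from~\eqref{diag:lambda2linear} together with the fact that $\phi_G$ and $\tau$ are morphisms of bundles of abelian groups. For $R$-linearity, I would first try to use the naturality of $\kappa$, which makes $T\phi_G$ and the fiber inclusions compatible with $\kappa$, combined with Proposition~\ref{prop:RCartTan}. The delicate point is the compatibility of $\lambda_2$ and $\tau$ with $\kappa$. If this does not follow from the module axioms of Proposition~\ref{prop:RModuleBundle} alone, I would isolate precisely the compatibility of $\kappa$ with $\lambda$ and $\tau$ that is needed. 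I expect this to be exactly the kind of axiom the paper imposes on scalar multiplication.
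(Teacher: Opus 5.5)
Your object-level argument is correct and is the paper's argument in different packaging. Both compute the doubly vertical part of $T^2G$ twice: once through the trivialization $T^2G\cong TG\times T\frakg$, and once through the vertical lift. The paper works over all of $G$. It identifies the kernel of $(\pi_{TG},T\pi_G)$ with $G\times T_{0_e}\frakg$ on one side, and with $TG\cong G\times\frakg$ via the pullback \eqref{eq:TanFun4} on the other. Your two-step fibering through \eqref{diag:lambda2pullback} and $\tau$ is the same computation. Indeed, by \eqref{eq:VertLiftExt} and $T0_G\circ 0_G=0_{TG}\circ 0_G$,
\begin{equation*}
  \tau_G\circ(\lambda_2)_G\circ\bigl(0_G\circ e\circ\Term_{\frakg},\, j_e\bigr)=\lambda_G\circ j_e \,.
\end{equation*}
So on the fiber over $\hat 0$ your comparison map is just $\lambda_G\circ j_e$, followed by $\pr_2\circ\chi_{G,\frakg}\circ(T\phi_G)^{-1}$. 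Your remark that one must pass to fibers rather than cancel a factor makes precise a step the paper leaves implicit.

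The genuine gap is the module structure, which you leave open; the reduction above makes $\tau$ and $\lambda_2$ drop out.

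\emph{Additivity.} Under $T\phi_G$ and $\chi_{G,\frakg}$, the fiber addition $+_{TG}$ corresponds to $+_G\times+_\frakg$, by naturality of $+$ and Proposition~\ref{prop:CartTanPreserve}. Moreover, $\lambda_G$ is additive for $+_{TG}$ by Definition~\ref{def:TangentStructure}\,(iv). Your suggested route via \eqref{diag:lambda2linear} and $\tau$ would instead give additivity for $T(+_G)$. You would then still have to match that with the fiber addition of $T_{\hat 0}\frakg$, e.g.\ via the computation in the proof of Proposition~\ref{prop:TGsemidirect}.

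\emph{$R$-linearity.} Naturality of $\kappa$ at $\phi_G$, together with Proposition~\ref{prop:RCartTan}, shows that $\kappa_{TG}$ corresponds to the diagonal action $\kappa_G\times\kappa_\frakg$. This restricts to the action on $T_{\hat 0}\frakg$. So the one remaining input is $\lambda_G\circ\kappa_G=\kappa_{TG}\circ(\id_R\times\lambda_G)$, which is exactly axiom \eqref{diag:ScalarMult1}. The $\tau$-compatibility \eqref{diag:ScalarMult3} is not needed.

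Your suspicion that an extra axiom is required is justified. The map $\kappa_{TX}\circ(\id_R\times\lambda_X)$ lands in the kernel of $(\pi_{TX},T\pi_X)$. So by \eqref{eq:TanFun4} it equals $\lambda_X\circ\kappa'_X$ for a unique natural $\kappa'_X: R\times TX\to TX$. Axiom \eqref{diag:ScalarMult1} says precisely $\kappa'=\kappa$, and this does not follow in any evident way from the axioms of Proposition~\ref{prop:RModuleBundle}.

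The paper's proof does not address this point either. It asserts an isomorphism of $R$-modules, although the proposition sits in a subsection that assumes only an $R$-module structure. To complete your proof, assume \eqref{diag:ScalarMult1}; without it, your argument yields only an isomorphism of abelian group objects $\frakg\cong T_{\hat 0}\frakg$.
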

\begin{proof}
Let $(G,m,e)$ be a group object. It follows from Proposition~\ref{prop:TGtrivial} that
\begin{equation*}
\begin{split}
  T^2 G 
  &\cong T(G \times \frakg) 
  \cong
  TG \times T\frakg
  \\
  &\cong
  G \times \frakg \times \frakg \times T_{0_e} \frakg
  \,.
\end{split}
\end{equation*}
Using this isomorphism, we see that the kernel of the morphism $(\pi_{TG}, T\pi_G): T^2 G \to T_2 G$ is isomorphic to
\begin{equation*}
\begin{tikzcd}[column sep=6em]
G \times T_{0_e}\frakg 
\ar[r, "{(\pr_1, 0_e \circ \Term, 0_e \circ \Term, \pr_2)}"]
\ar[d, "\pr_1"']
&
G \times \frakg \times \frakg \times T_{0_e}\frakg
\ar[d, "\pr_{1,2,3}"]
\\
G
\ar[r, "{(\id_G, 0_e \circ \Term, 0_e \circ \Term)}"']
&
G \times \frakg \times \frakg
\end{tikzcd}
\end{equation*}
where $\Term$ is the terminal morphism. As is the case for any tangent category, the kernel of $(\pi_{TG}, T\pi_G)$ is isomorphic to $TG \cong G \times \frakg$. We conclude that the $R$-module $\frakg$ is isomorphic to $T_{0_e}\frakg$.
\end{proof}

Proposition~\ref{prop:frakgTanStable} implies that we have an isomorphism
\begin{equation*}
  T^k G \cong G \times \frakg^{2^k-1}
  \,.
\end{equation*}
In order to distinguish the factors of $\frakg$ we enumerate the factors in the product $T^k = T^{(k)} \cdots T^{(2)} T^{(1)}$, where $T^{(i)} = T$. Then we can write
\begin{equation*}
\begin{split}    
  T^{(2)} T^{(1)} G 
  &\cong
  T^{(2)} (G \times T^{(1)}_e G)
  \\
  &\cong T^{(2)} G \times T^{(2)} T^{(1)}_e G
  \\
  &\cong (G \times T^{(2)}_e G) \times (T^{(1)}_e G \times T^{(2)}_0 T^{(1)}_e G)
  \\
  &\cong G \times \frakg_{\{2\}} \times \frakg_{\{1\}} \times \frakg_{\{1,2\}}
  \,,
\end{split}
\end{equation*}
where $\frakg_{\{i\}} = T^{(i)}_e G$, $\frakg_{\{i,j\}} = T^{(j)}_0 T^{(i)}_e G$, etc. In this way, we can label the factors of $\frakg$ in the trivialization of $T^k G$ by the subsets of $\{1, \ldots, k\}$. For a tangent-stable module or abelian group $A$ we obtain the isomorphism
\begin{equation}
\label{eq:TkAdiffProduct}
  T^k\! A \cong \prod_{I \subset \{1, \dots, k\}} A_I
  \,,
\end{equation}
where $A_I = A$ for all $I$. In this notation, the action of an element $\sigma \in S_k$ of the symmetric group on $T^k A$ is given by mapping $A_I$ identically to $A_{\sigma(I)}$.

Let $\pr_J: \prod_{I \subset \{1, \dots, k\}} A_I \to A_J$ denote the projection to the factor $A_J$. Then the morphism $\Val_A$ is given by
\begin{equation*}
  \Val_A:
  TA \xrightarrow{~\cong~} 
  A_{\emptyset} \times A_{\{1\}}
  \xrightarrow{~\pr_{\{1\}}~}
  A_{\{1\}} 
  = A
\end{equation*}
onto the values of the tangent vectors. Viewing $TA$ as an abelian group in its own right, $\Val_{TA}$ is given by the commutative diagram
\begin{equation*}
\begin{tikzcd}[column sep=5em]
T^2 A \ar[r, "\Val_{TA}"] 
\ar[d, "\cong"']
&
TA
\ar[d, "\cong"]
\\
A_{\emptyset} \times 
A_{\{1\}} \times
A_{\{2\}} \times 
A_{\{1,2\}}
\ar[r, "{(\pr_{\{2\}}, \pr_{\{1,2\}})}"']
&
A_{\{2\}} \times 
A_{\{1,2\}}
\end{tikzcd}
\end{equation*}
The tangent morphism of $\Val_A$ is given by
\begin{equation*}
\begin{tikzcd}[column sep=5em]
T^2 A \ar[r, "T\Val_A"] 
\ar[d, "\cong"']
&
TA
\ar[d, "\cong"]
\\
A_{\emptyset} \times 
A_{\{1\}} \times
A_{\{2\}} \times 
A_{\{1,2\}}
\ar[r, "{(\pr_{\{1\}}, \pr_{\{1,2\}})}"']
&
A_{\{1\}} \times 
A_{\{1,2\}}
\end{tikzcd}
\end{equation*}
From the last two commutative diagrams we deduce the following:

\begin{Lemma}
For a tangent-stable module $A$, the following relations hold:
\begin{align}
\label{eq:TEtaEtaTtau}
  T\Val_{A} 
  &= \Val_{TA} \circ \tau_A
  \\
\label{eq:EtaTEtalambda}
  \Val_A
  &=
  \Val_A \circ T\Val_{A} \circ \lambda_{A} 
  \\
\label{eq:EtaTEtalambda2}
  \Val_A \circ \pr_2
  &=
  \Val_A \circ T\Val_{A} \circ (\lambda_2)_A 
  \\
\label{eq:EtaEtaT}
  \Val_A \circ T\Val_{A}
  &= \Val_A \circ \Val_{TA}
\\
\label{eq:EtaTEtaSymmetric01}
  \Val_A \circ T\Val_{A} \circ \tau_A 
  &= \Val_A \circ T\Val_{A}
\end{align}
\end{Lemma}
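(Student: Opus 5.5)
The plan is to work throughout in the product decomposition \eqref{eq:TkAdiffProduct}. For $k=2$ it reads
\[
T^2 A \cong A_\emptyset \times A_{\{1\}} \times A_{\{2\}} \times A_{\{1,2\}}.
\]
Under this isomorphism every structure map is a projection or an inclusion. Each identity then reduces to checking which factor $A_I$ survives on each side. The two commutative diagrams displayed just before the lemma are the basic input. They give $\Val_{TA} = (\pr_{\{2\}}, \pr_{\{1,2\}})$ and $T\Val_A = (\pr_{\{1\}}, \pr_{\{1,2\}})$. The identification $TA \cong A_\emptyset \times A_{\{1\}}$ gives $\Val_A = \pr_{\{1\}}$.

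\textbf{Relations \eqref{eq:EtaEtaT} and \eqref{eq:TEtaEtaTtau}.} Composing the descriptions above, both $\Val_A \circ T\Val_A$ and $\Val_A \circ \Val_{TA}$ equal $\pr_{\{1,2\}}$, which proves \eqref{eq:EtaEtaT}. For \eqref{eq:TEtaEtaTtau}, I use that $\tau_A$ acts as the transposition $(1\,2)$. It therefore swaps $A_{\{1\}}$ and $A_{\{2\}}$ and fixes $A_\emptyset$ and $A_{\{1,2\}}$, as stated right after \eqref{eq:TkAdiffProduct}. Hence
\[
\Val_{TA} \circ \tau_A = (\pr_{\{1\}}, \pr_{\{1,2\}}) = T\Val_A .
\]
Since $\tau_A$ fixes $A_{\{1,2\}}$, we also get $\pr_{\{1,2\}} \circ \tau_A = \pr_{\{1,2\}}$, which is \eqref{eq:EtaTEtaSymmetric01}.

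\textbf{Relation \eqref{eq:EtaTEtalambda}.} This is exactly diagram \eqref{eq:DiffObj3} of Proposition~\ref{prop:TangStabDiffObj}. Equivalently, in components $\lambda_A$ sends $(a,v) \in A_\emptyset \times A_{\{1\}}$ to $(a,0,0,v)$. This follows from the pullback \eqref{eq:TanFun4} together with the kernel computation in the proof of Proposition~\ref{prop:frakgTanStable}.

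\textbf{Relation \eqref{eq:EtaTEtalambda2}.} I would compute $(\lambda_2)_A$ in components from \eqref{eq:VertLiftExt}. A point of $T_2 A$ is $(a,u,v)$ with $u,v \in A$ over the common base point $a$. The map $T0_A$ sends $(a,u)$ to $(a,0,u,0)$, since $0_A$ is the inclusion of $A_\emptyset$ and its tangent keeps the $\{2\}$-direction. The map $\lambda_A$ sends $(a,v)$ to $(a,0,0,v)$. The sum $+_{TA}$ is fiberwise addition over $TA = A_\emptyset \times A_{\{2\}}$, which I take to be addition in the factors $A_{\{1\}}$ and $A_{\{1,2\}}$. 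This yields $(a,0,u,v)$, and applying $\tau_A$ gives $(a,u,0,v)$. Then $\pr_{\{1,2\}}$ returns $v = \Val_A \circ \pr_2$. Alternatively, I could deduce \eqref{eq:EtaTEtalambda2} from \eqref{eq:EtaTEtalambda} using linearity \eqref{diag:lambda2linear} together with the additivity of $\Val_A$ from \eqref{eq:DiffObj1}.

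\textbf{Main obstacle.} The substantive point is justifying the bookkeeping, not the identities themselves. Specifically, it must be checked that under \eqref{eq:TkAdiffProduct} the maps $\tau_A$, $\lambda_A$, $T0_A$ and $+_{TA}$ really act by the factor permutations, inclusions and additions claimed above. For $\tau$ the paper asserts this. For $+_{TA}$ and $T0_A$, I would derive it from naturality of $+$ and $0$ with respect to the isomorphism $TA \cong A \times A$, using Corollary~\ref{cor:TAproduct} and the cartesian property $T(A\times A) \cong TA \times TA$.
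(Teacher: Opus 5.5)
Your argument is the paper's own: the lemma is read off from the decomposition $T^2A\cong A_\emptyset\times A_{\{1\}}\times A_{\{2\}}\times A_{\{1,2\}}$, using $\Val_{TA}=(\pr_{\{2\}},\pr_{\{1,2\}})$, $T\Val_A=(\pr_{\{1\}},\pr_{\{1,2\}})$, and $\tau_A$ exchanging $A_{\{1\}}$ and $A_{\{2\}}$. Your treatment of \eqref{eq:TEtaEtaTtau}, \eqref{eq:EtaTEtalambda}, \eqref{eq:EtaEtaT} and \eqref{eq:EtaTEtaSymmetric01} is fine.

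There is a slip in your computation for \eqref{eq:EtaTEtalambda2}. The map $+_{TA}=(+T)_A$ adds in the fibers of $\pi_{TA}\colon T^2A\to TA$. The base of $\pi_{TA}$ is $A_\emptyset\times A_{\{1\}}$; this is exactly why $\Val_{TA}$ projects onto $A_{\{2\}}\times A_{\{1,2\}}$. So $+_{TA}$ adds the $A_{\{2\}}$- and $A_{\{1,2\}}$-components.

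The rule you state, addition over $A_\emptyset\times A_{\{2\}}$ in the factors $A_{\{1\}}$ and $A_{\{1,2\}}$, describes $T(+_A)$, the addition over $T\pi_A$. Under that rule, $(a,0,u,0)$ and $(a,0,0,v)$ lie over the different points $(a,u)$ and $(a,0)$, so they could not be added at all. Under the correct rule both lie over $(a,0)$ and their sum is $(a,0,u,v)$, which is what you wrote. So the conclusion $\pr_{\{1,2\}}(a,u,0,v)=v$ stands once the rule is corrected. The naturality argument you sketch under ``Main obstacle'', via $T(TA)\cong TA\times TA$, would in fact give the correct rule.

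Your alternative route via \eqref{diag:lambda2linear} does not work as stated. That diagram expresses linearity in the second argument with the first argument $u$ held fixed. So it cannot carry \eqref{eq:EtaTEtalambda}, which only covers $(\lambda_2)_A(0,v)=\lambda_A(v)$, over to arbitrary $u$.

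What does work abstractly is the following. Using \eqref{eq:TanFun2}, \eqref{eq:TanFun5} and $\tau\circ T0=0T$ (which holds because $\tau$ is a morphism of bundles of abelian groups), rewrite
\begin{equation*}
  (\lambda_2)_A = T(+_A)\circ(\nu_2)_A^{-1}\circ\bigl(0_{TA}\circ\pr_1,\,\lambda_A\circ\pr_2\bigr).
\end{equation*}
Then use that $\Val_A\circ T\Val_A$ is additive with respect to $T(+_A)$, which follows from \eqref{eq:DiffObj1} and \eqref{eq:DiffObj2}. Finally apply \eqref{eq:EtaTEtalambda} to the second summand and the naturality of $0$ to the first.
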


\begin{Lemma}
\label{lem:AaddTadd}
Let $A$ be a tangent-stable module with addition $\Aadd$. Then
\begin{equation*}
  \Val_A \circ T\Val_A \circ +_{T\!A}
  = \Aadd \circ 
  ( \Val_A \circ T\Val_A \circ \pr_1, 
    \Val_A \circ T\Val_A \circ \pr_2 )
  \,,
\end{equation*}
where $\pr_1, \pr_2: T^2\! A \times_{TA} T^2\! A \to T^2\!A$ are the projections.
\end{Lemma}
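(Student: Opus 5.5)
We work in the trivialization~\eqref{eq:TkAdiffProduct} with $k=2$, identifying $T^2 A \cong A_\emptyset \times A_{\{1\}} \times A_{\{2\}} \times A_{\{1,2\}}$. By the two diagrams preceding the lemma, $\Val_A \circ T\Val_A = \pr_{\{1,2\}}$ is the projection onto the top factor. So the claim says that $\pr_{\{1,2\}}$ intertwines the addition $+_{TA}$ of the bundle $\pi_{TA}: T^2A \to TA$ with the addition $\Aadd$ of $A$.

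\textbf{Step 1: describe $\pi_{TA}$ and the fiber product.} View $TA \cong A \times A$ as an abelian group, tangent-stable by Proposition~\ref{prop:frakgTanStable}. Its trivialization is $T(TA) \cong TA \times TA$, with $\Val_{TA} = (\pr_{\{2\}}, \pr_{\{1,2\}})$ onto the second factor. The bundle projection is $\pi_{TA} = (\pr_\emptyset, \pr_{\{1\}})$. Hence
\begin{equation*}
T^2 A \times_{TA} T^2 A \cong TA \times (TA \times TA),
\end{equation*}
where a point consists of a common base $(a_\emptyset, a_{\{1\}})$ and two fiber values $(b_{\{2\}}, b_{\{1,2\}})$ and $(c_{\{2\}}, c_{\{1,2\}})$.

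\textbf{Step 2: addition is fiberwise addition in the trivialization.} Apply the first diagram of~\eqref{eq:DiffObj1} to the tangent-stable group $TA$:
\begin{equation*}
\Val_{TA} \circ +_{TA} = \Aadd_{TA} \circ (\Val_{TA} \times \Val_{TA}) \circ (\pr_1, \pr_2),
\end{equation*}
where $\Aadd_{TA}$ is the group addition of $TA \cong A \times A$. That is, the fiber component of a sum is the sum of the fiber components.

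\textbf{Step 3: compose with $\Val_A$.} Use~\eqref{eq:EtaEtaT}, namely $\Val_A \circ T\Val_A = \Val_A \circ \Val_{TA}$, to rewrite the left side as $\Val_A \circ \Val_{TA} \circ +_{TA}$. Step~2 turns this into
\begin{equation*}
\Val_A \circ \Aadd_{TA} \circ (\Val_{TA} \circ \pr_1, \Val_{TA} \circ \pr_2).
\end{equation*}
It remains to show that $\Val_A: TA \to A$ is a group homomorphism for $\Aadd_{TA}$, i.e.\ $\Val_A \circ \Aadd_{TA} = \Aadd \circ (\Val_A \times \Val_A)$. This is exactly the first diagram of~\eqref{eq:DiffObj2}, since $\Aadd_{TA} = T\Aadd \circ \chi_{A,A}^{-1}$. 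Applying~\eqref{eq:EtaEtaT} once more to each term gives the right-hand side of the lemma.

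\textbf{Main obstacle.} The delicate point is the identification in Steps~1 and~2. One must check that the group structure on $TA$ used in its trivialization (the tangent group $T\Aadd \circ \chi^{-1}$) is the one for which~\eqref{eq:DiffObj1} applies. One must also check that the fiber-product addition $+_{TA}$ really uses the base projection $\pi_{TA}$, not $T\pi_A$. Both follow from Proposition~\ref{prop:TGtrivial} and Corollary~\ref{cor:TAproduct} applied to $G = TA$, so I would verify these before doing the composition.
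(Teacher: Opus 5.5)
Your argument is correct, but it is organized differently from the paper's proof.

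You use \eqref{eq:EtaEtaT} to trade $\Val_A \circ T\Val_A$ for $\Val_A \circ \Val_{TA}$. The addition $+_{TA}$ of the bundle $\pi_{TA}$ is then absorbed by the first diagram of \eqref{eq:DiffObj1} applied to the abelian group $TA$. Finally, the first diagram of \eqref{eq:DiffObj2} says that $\Val_A$ is additive for the tangent group $T\Aadd \circ \chi_{A,A}^{-1}$.

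The paper never applies \eqref{eq:DiffObj1} to $TA$. It proceeds as follows:
\begin{enumerate}
\item It uses the additivity of $\tau$ (right diagram of \eqref{eq:TanFun2}) to write $+_{TA} = \tau_A \circ T{+_A} \circ \nu_2^{-1} \circ (\tau_A \times_{TA} \tau_A)$.
\item It removes the $\tau$'s with \eqref{eq:EtaTEtaSymmetric01}.
\item It applies $T$ to the first diagram of \eqref{eq:DiffObj1} for $A$ alone.
\item It concludes with \eqref{eq:DiffObj2} and the naturality of $\chi$ and $j_A$.
\end{enumerate}

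Your version is shorter and is really the structural form of the observation $\Val_A \circ T\Val_A = \pr_{\{1,2\}}$. The price is that $TA$ itself must be tangent-stable for the tangent-group structure, with $T_0(TA) \cong TA$ additive. Only then do \eqref{eq:DiffObj1} for $TA$ and the formula $\Val_{TA} = (\pr_{\{2\}}, \pr_{\{1,2\}})$ behind \eqref{eq:EtaEtaT} refer to the same trivialization.

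The paper tacitly assumes this when it describes $\Val_{TA}$, so you are on the same footing. Still, the most direct justification is not a bare citation of Proposition~\ref{prop:frakgTanStable}. It comes from three facts: $TA \cong A \times T_{\Azero}A$ as groups (Corollary~\ref{cor:TAproduct}), tangent fibers of products split via $\chi$, and both $A$ and $T_{\Azero}A$ are tangent-stable. The paper's route avoids this step by using the differential-object diagrams only for $A$ plus the tangent axioms, at the cost of more bookkeeping with $\nu_2$ and $\chi$.

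Your second worry is vacuous: $+_{TA}$ is by definition the component of $+$ at $TA$, so its fiber product is over $\pi_{TA}$.
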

\begin{proof}
Let $j_A: TA \times_A TA \to TA \times TA$ denote the natural morphism. Due to its naturality, the diagram
\begin{equation*}
\begin{tikzcd}
T(TA \times_A^{\pi_A, \pi_A} TA)
\ar[r, "(\nu_2)_A"]
\ar[d, "{Tj_A}"]
&
T^2 A \times_{TA}^{T\pi_A, T\pi_A} T^2 A
\ar[r, "\tau_A \times_{T\!A} \tau_A"]
\ar[d, "{j_{TA}}"]
&[1em]
T^2 A \times_{TA}^{\pi_{TA}, \pi_{TA}} T^2 A
\ar[d, "{j_{TA}}"]
\\
T(TA \times TA)
\ar[r, "\chi_{T\!A, T\!A}"']
&
T^2 A \times T^2 A
\ar[r, "\tau_A \times \tau_A"']
&
T^2 A \times T^2 A
\end{tikzcd}
\end{equation*}
commutes. The commutativity is equivalent to the relation
\begin{equation}
\label{eq:AaddTadd01}
  Tj_A \circ
  (\nu_2)_A^{-1} \circ 
  (\tau_A \times_{T\!A} \tau_A)
  =
  \chi_{T\!A,T\!A}^{-1} \circ
  (\tau_A \times \tau_A) \circ
  j_{TA}
  \,,
\end{equation}
where we have used that $\tau_A^2 = \id_A$. Applying the tangent functor to the left diagram of~\eqref{eq:DiffObj1}, we obtain
\begin{equation*}
\begin{split}
  \Val_A \circ T\Val_A \circ +_{TA}
  &=
  \Val_A \circ T\Val_A \circ \tau_A \circ T\!+_A \circ\,
  (\nu_2)_A^{-1} \circ 
  (\tau_A \times_{T\!A} \tau_A)
  \\  
  &=
  \Val_A \circ T\Val_A \circ T\!+_A \circ\,
  (\nu_2)_A^{-1} \circ 
  (\tau_A \times_{T\!A} \tau_A)
  \\  
  &=
  \Val_A \circ 
  T\Aadd \circ 
  T(\Val_A \times \Val_A) \circ 
  Tj_A \circ\,
  (\nu_2)_A^{-1} \circ 
  (\tau_A \times_{T\!A} \tau_A)
  \\  
  &=
  \Aadd \circ 
  (\Val_A \times \Val_A) \circ
  \chi_{A,A} \circ 
  T(\Val_A \times \Val_A) 
  \\
  &{\quad}
  \circ 
  \chi_{T\!A,T\!A}^{-1} \circ
  (\tau_A \times \tau_A) \circ
  j_{TA}
  \\  
  &=
  \Aadd \circ 
  (\Val_A \times \Val_A) \circ
  (T\Val_A \times T\Val_A) \circ 
  (\tau_A \times \tau_A) \circ
  j_{TA}
  \\  
  &=
  \Aadd \circ 
  ( \Val_A \circ T\Val_A \circ \pr_1, 
    \Val_A \circ T\Val_A \circ \pr_2 )
  \,,
\end{split}
\end{equation*}
where we have used the right diagram of~\eqref{eq:TanFun2}, Equation~\eqref{eq:EtaTEtaSymmetric01}, the diagram we obtain by applying the tangent functor to the left diagram of~\eqref{eq:DiffObj1}, the left diagram of~\eqref{eq:DiffObj2} and Equation~\eqref{eq:AaddTadd01}, the naturality of $\chi$, and finally~\eqref{eq:EtaTEtaSymmetric01} again.
\end{proof}

\begin{Corollary}
\label{cor:AaddTminus}
Let $A$ be a tangent-stable module with subtraction $\Aminus$. Then
\begin{equation}
\label{eq:AminusTminus}
  \Val_A \circ T\Val_A \circ -_{T\!A}
  = \Aminus \circ 
  ( \Val_A \circ T\Val_A \circ \pr_1, 
    \Val_A \circ T\Val_A \circ \pr_2 )
  \,,
\end{equation}
where $\pr_1, \pr_2: T^2\! A \times_{TA} T^2\! A \to T^2\!A$ are the projections.
\end{Corollary}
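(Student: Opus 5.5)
We deduce Corollary~\ref{cor:AaddTminus} from Lemma~\ref{lem:AaddTadd} by writing the difference as a sum with an additive inverse. By definition~\eqref{eq:Difference}, applied at the object $TA$, we have $-_{TA} = +_{TA} \circ (\id_{T^2A} \times_{TA} i_{TA})$. Lemma~\ref{lem:AaddTadd} then reduces the left side of~\eqref{eq:AminusTminus} to
\begin{equation*}
  \Aadd \circ \bigl( \Val_A \circ T\Val_A \circ \pr_1,\, \Val_A \circ T\Val_A \circ i_{TA} \circ \pr_2 \bigr).
\end{equation*}
Since $\Aminus = \Aadd \circ (\pr_1, \Aminus_1 \circ \pr_2)$, where $\Aminus_1: A \to A$ denotes the additive inverse of $A$, it suffices to prove
\begin{equation*}
  \Val_A \circ T\Val_A \circ i_{TA} = \Aminus_1 \circ \Val_A \circ T\Val_A \,.
\end{equation*}

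To show this, we use that $\Val_A \circ T\Val_A \circ +_{TA}$ is additive by Lemma~\ref{lem:AaddTadd}, so it should also preserve inverses. Concretely, we compose Lemma~\ref{lem:AaddTadd} with the morphism $(\id_{T^2A}, i_{TA}) : T^2A \to T^2A \times_{TA} T^2A$. Since $+_{TA} \circ (\id, i_{TA}) = 0_{TA} \circ \pi_{TA}$, the left side becomes $\Val_A \circ T\Val_A \circ 0_{TA} \circ \pi_{TA}$.

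We compute this term next. Naturality of $0$ gives $T\Val_A \circ 0_{TA} = 0_A \circ \Val_A$. The right diagram of~\eqref{eq:DiffObj1} then gives $\Val_A \circ 0_A = \Azero \circ \Term$. Hence $\Val_A \circ T\Val_A \circ 0_{TA} \circ \pi_{TA} = \Azero \circ \Term$.

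Putting this together, we obtain
\begin{equation*}
  \Aadd \circ \bigl(\Val_A \circ T\Val_A,\, \Val_A \circ T\Val_A \circ i_{TA}\bigr) = \Azero \circ \Term .
\end{equation*}
Uniqueness of inverses in the abelian group $\Hom(T^2A, A)$ then yields the required identity.

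The only point needing care is that $(\id, i_{TA})$ lands in the fiber product over $\pi_{TA}$. This holds because $i$ is a morphism of bundles, so $\pi_{TA} \circ i_{TA} = \pi_{TA}$. With that checked, the rest is bookkeeping.

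Alternatively, one can use the module structure, $i = \kappa \circ ((\Rminus\Runit) \times \id)$ as in Proposition~\ref{prop:ScalarImpliesNegatives}, but the group-theoretic argument above avoids any compatibility axioms between $\kappa$ and $\Val$.
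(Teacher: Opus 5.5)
Your proof is correct and is essentially the argument the paper leaves implicit: the paper states this as an immediate corollary of Lemma~\ref{lem:AaddTadd} with no separate proof. You fill it in by writing $-_{TA} = +_{TA} \circ (\id \times_{TA} i_{TA})$ and showing that $\Val_A \circ T\Val_A$, being additive with respect to $+_{TA}$, sends $i_{TA}$ to the additive inverse of $A$, which is exactly what that step requires.
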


\subsection{The axioms of scalar multiplication}
\label{sec:ScMult}

In order to obtain a Cartan calculus on every object of a tangent category, we have to require certain compatibility conditions of the $R$-module structure with the tangent structure.

\begin{Definition}
A commutative ring object in a cartesian tangent category will be called \textbf{tangent-stable} if it is tangent-stable (Definition~\ref{def:ModTangentStable}) as a module over itself.
\end{Definition}

\begin{Definition}
\label{def:RScalar}
Let $R$ be a commutative ring in a cartesian tangent category $\calC$. An $R$-module structure $\kappa_X: R \times TX \to TX$ on the tangent bundle $\pi:T \to \Id$ will be called a \textbf{scalar multiplication} if $R$ is tangent-stable and if the following diagrams commute for all $X \in \calC$:
\begin{equation}
\label{diag:ScalarMult1}
\begin{tikzcd}[column sep={large}]
R \times TX 
\ar[r, "\id_R \times \lambda_X"] 
\ar[d, "\kappa_X"'] &
R \times T^2 X \ar[d, "\kappa_{TX}"]
\\
TX 
\ar[r, "\lambda_X"'] 
&
T^2 X
\end{tikzcd}
\end{equation}
\begin{equation}
\label{diag:ScalarMult2}
\begin{tikzcd}[column sep=6em]
TR \times TX
\ar[r, "\Tpart{1}\kappa_X"]
\ar[d, "{(\pi_R, \Val_R) \, \times \, \id_{TX}}"']
&[4em]
T^2X
\\
R \times R \times TX
\ar[r, "{\big(\kappa_X \circ (\pr_1,\pr_3), \, 
\kappa_X \circ (\pr_2,\pr_3)\big)}"']
&
T_2 X
\ar[u, "(\lambda_2)_X"']
\end{tikzcd}
\end{equation}
\begin{equation}
\label{diag:ScalarMult3}
\begin{tikzcd}[column sep=3em]
R \times T^2 X
\ar[r, "\Tpart{2}\kappa_X"]
\ar[d, "\id_R \times \tau_X"']
&
T^2 X
\\
R \times T^2 X
\ar[r, "\kappa_{TX}"']
&
T^2 X
\ar[u, "\tau_X"']
\end{tikzcd}
\end{equation}

\end{Definition}


\begin{Proposition}
\label{prop:RmodLinearization}
Let $A$ be an $R$-module in a cartesian tangent category with scalar $R$-multiplication; let $\Amod: R \times A \to A$ denote the module structure. If $A$ is tangent-stable, then the diagram
\begin{equation*}
\begin{tikzcd}
R \times TA
\ar[r, "\Tpart{2} \Amod"]
\ar[d, "\id_R \times \Val_A"']
&
TA
\ar[d, "\Val_A"]
\\
R \times A
\ar[r, "\Amod"']
&
A
\end{tikzcd}
\end{equation*}
commutes.
\end{Proposition}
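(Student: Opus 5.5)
The plan is to reduce the statement to a claim about the fiber $T_{\Azero}A \cong A$, using the left-translation description of $\Val_A$ from the proof of Proposition~\ref{prop:TGtrivial} applied to the underlying abelian group $(A,\Aadd,\Azero)$. Recall that $\Val_A$ is the corestriction to $T_{\Azero}A$ of
\begin{equation*}
  \Val'_A = \Tpart{2}\Aadd \circ \bigl((\Aminus\circ\pi_A)\times\id_{TA}\bigr)\circ\Delta_{TA},
\end{equation*}
followed by the tangent-stability isomorphism $\psi: T_{\Azero}A \cong A$.

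\textbf{Step 1: linearity of $\Tpart{2}\Amod$ in the second slot.} The module axiom
\begin{equation*}
  \Amod\circ(\id_R\times\Aadd) = \Aadd\circ(\Amod\times\Amod)\circ(\pr_1,\pr_2,\pr_1,\pr_3)
\end{equation*}
holds as an equation of morphisms $R\times A\times A\to A$. Apply $T$, precompose with $\chi^{-1}\circ(0_R\times\id_{A}\times\id_{TA})$, and use $\chi$ and the functoriality~\eqref{eq:T2functorial} of partial tangent maps. This gives, in generalized elements,
\begin{equation*}
  \Tpart{2}\Amod\bigl(r,\Tpart{2}\Aadd(a,v)\bigr) = \Tpart{2}\Aadd\bigl(\Amod(r,a),\Tpart{2}\Amod(r,v)\bigr).
\end{equation*}
The second argument of $\Tpart{2}\Aadd$ is on the tangent side, so one partial derivative is taken along $A$ and the other along $TA$. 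Take $a=\Aminus\pi_A(v)$. Since $\pi_A\circ\Tpart{2}\Amod=\Amod\circ(\id_R\times\pi_A)$ and $\Amod(r,-)$ commutes with $\Aminus$, we get $\Amod(r,a)=\Aminus\pi_A(\Tpart{2}\Amod(r,v))$. Hence
\begin{equation*}
  \Val'_A\circ\Tpart{2}\Amod=\Tpart{2}\Amod\circ(\id_R\times\Val'_A).
\end{equation*}
Both sides land in $T_{\Azero}A$, because $\Tpart{2}\Amod$ maps $R\times T_{\Azero}A$ into $T_{\Azero}A$ (as $\Amod(r,\Azero)=\Azero$). Consequently the proposition reduces to the fiber statement
\begin{equation*}
  \psi\circ\Tpart{2}\Amod|_{R\times T_{\Azero}A}=\Amod\circ(\id_R\times\psi).
\end{equation*}

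\textbf{Step 2: comparing the two $R$-actions on $T_{\Azero}A$ (main obstacle).} By definition, $\psi$ intertwines the fiber action $\kappa_{\Azero}$, which is induced by $\kappa_A$, with $\Amod$. So it suffices to show that $\Tpart{2}\Amod$ and $\kappa_A$ agree on $R\times T_{\Azero}A$; this is where the scalar multiplication axioms must enter.

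First I would use naturality of $\kappa$ with respect to $\Amod:R\times A\to A$, together with Proposition~\ref{prop:RCartTan} and~\eqref{diag:RzeroPreserve2}. This shows that $\Tpart{2}\Amod(r,-)$ is $\kappa$-linear, i.e. it commutes with $\kappa_A(s,-)$. That alone is not enough. To pin down the action itself, I would write $\psi$ via $\Val_A=\Val_A\circ T\Val_A\circ\lambda_A$ from~\eqref{eq:EtaTEtalambda}, and transport $\kappa_A$ through $\lambda_A$ using~\eqref{diag:ScalarMult1}. Then I would use~\eqref{diag:ScalarMult3} to move between $\kappa_{TA}$ and $\Tpart{2}\kappa_A$ across $\tau_A$, together with~\eqref{eq:TEtaEtaTtau} and~\eqref{eq:EtaTEtaSymmetric01} to absorb $\tau_A$.

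The idea is that on vertical vectors $\lambda_A(v)$ with $v\in T_{\Azero}A$, applying $T\Amod(r,-)$ and applying $\kappa$ must agree after $\Val_A\circ T\Val_A$. This should follow because $\Amod(r,-)$ is an $R$-linear map $A\to A$ and the $R$-module structure is natural. If this route stalls, I would instead apply the $R$-linearity of the group isomorphism $\phi_A$ from Proposition~\ref{prop:TGtrivial}. Its inverse restricted to $A\times T_{\Azero}A$ is an $R$-module map, so computing $\Tpart{2}\Amod$ on the factor $\{\Azero\}\times T_{\Azero}A$ via Proposition~\ref{prop:TSumPartials} may reduce it directly to the fiber action.
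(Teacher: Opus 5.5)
There is a genuine gap, and it sits in Step~2, where all the content lies. Your Step~1 is sound, up to a typo: to place $\Tpart{2}\Aadd$ inside $T\Amod$ you must precompose with $\chi^{-1}\circ(0_R\times 0_A\times\id_{TA})$, not with $0_R\times\id_A\times\id_{TA}$. With that fix it correctly reduces the claim to showing that $\Tpart{2}\Amod$ and $\kappa_A$ agree on $R\times T_{\Azero}A$. None of the routes you sketch for this can close it:

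\begin{enumerate}
\item Naturality of $\kappa$ plus $R$-linearity of $\Amod$ only shows that $\Tpart{2}\Amod$ commutes with $\kappa$. As you note, this does not determine it: on $T_0\bbR\cong\bbR$ in $\Mfld$, every linear map commutes with scalars.
\item The $\lambda$-route is circular. Using \eqref{diag:ScalarMult1}, naturality of $\lambda$, and \eqref{eq:EtaTEtalambda}, the assertion that ``$T\Amod(r,-)$ and $\kappa$ agree on $\lambda_A(v)$ after $\Val_A\circ T\Val_A$'' unwinds to the original claim $\Val_A\circ\Tpart{2}\Amod=\Amod\circ(\id_R\times\Val_A)$.
\item Axiom \eqref{diag:ScalarMult3} concerns $\Tpart{2}\kappa_A$, not $\Tpart{2}\Amod$. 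Using it ``to move between $\kappa_{TA}$ and $\Tpart{2}\kappa_A$'' never touches the map you need to control.
\item In the fallback, $R$-linearity of $\phi_A$ is again a statement about $\kappa$ alone, and Proposition~\ref{prop:TSumPartials} merely splits $T\Amod$ into its partials.
\end{enumerate}
None of your ingredients produces an equation in which $\Tpart{2}\Amod$ and $\kappa$ appear together.

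The missing idea is to \emph{differentiate} the one relation that links them: the tangent-stability square $\Val_A\circ\kappa_A=\Amod\circ(\id_R\times\Val_A)$. It holds because the trivialization of Proposition~\ref{prop:TGtrivial} and the isomorphism $T_{\Azero}A\cong A$ are $R$-linear.

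By the functoriality \eqref{eq:T2functorial} of partial tangent maps, this square yields $T\Val_A\circ\Tpart{2}\kappa_A=\Tpart{2}\Amod\circ(\id_R\times T\Val_A)$. Then \eqref{diag:ScalarMult3} rewrites $\Tpart{2}\kappa_A$ as $\tau_A\circ\kappa_{TA}\circ(\id_R\times\tau_A)$. The tools you listed now finish the job: \eqref{eq:EtaTEtaSymmetric01} absorbs $\tau_A$ on both sides, and naturality of $\kappa$ along $\Val_A$ handles the rest, giving
\[
  \Val_A\circ\Tpart{2}\Amod\circ(\id_R\times T\Val_A)
  =\Val_A\circ T\Val_A\circ\kappa_{TA}\circ(\id_R\times\tau_A)
  =\Amod\circ(\id_R\times\Val_A)\circ(\id_R\times T\Val_A).
\]
Since $T\Val_A$ is a split epimorphism, $\id_R\times T\Val_A$ can be cancelled. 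This is the paper's proof. It works on all of $TA$ at once, so your fiberwise reduction in Step~1, though correct, is not needed.
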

\begin{proof}
Since $A$ is tangent-stable, the diagram
\begin{equation*}
\begin{tikzcd}
R \times TA
\ar[r, "\kappa_A"]
\ar[d, "\id_R \times \Val_A"']
&
TA
\ar[d, "\Val_A"]
\\
R \times A
\ar[r, "\Amod"']
&
A
\end{tikzcd}
\end{equation*}
commutes. By the naturality of $\kappa$, the diagram
\begin{equation*}
\begin{tikzcd}
R \times T^2 A
\ar[r, "\kappa_{TA}"]
\ar[d, "\id_R \times T\Val_A"']
&
T^2 A
\ar[d, "T \Val_A"]
\\
R \times TA
\ar[r, "\kappa_A"']
&
TA
\end{tikzcd}
\end{equation*}
commutes. Combining the last two diagrams and using Equation~\eqref{eq:EtaEtaT}, we obtain the commutative diagram
\begin{equation}
\label{eq:T2mA04}
\begin{tikzcd}
R \times T^2 A
\ar[r, "\kappa_{TA}"]
\ar[d, "{\id_R \times (\Val_A \circ \Val_{TA})}"']
&
T^2 A
\ar[d, "\Val_A \circ \Val_{TA}"]
\\
R \times A
\ar[r, "\Amod"']
&
A
\end{tikzcd}
\end{equation}
By the functoriality~\eqref{eq:T2functorial} of the partial tangent functor $\Tpart{2}$ we obtain the commutative diagram
\begin{equation}
\label{eq:T2mA02}
\begin{tikzcd}
R \times T^2 A
\ar[r, "\Tpart{2}\kappa_A"]
\ar[d, "\id_R \times T\Val_A"']
&
T^2 A
\ar[d, "T\Val_A"]
\\
R \times TA
\ar[r, "\Tpart{2} \Amod"']
&
TA
\end{tikzcd}
\end{equation}
 We now have
\begin{equation*}
\begin{split}
  \Val_A \circ \Tpart{2} \Amod \circ (\id_R \times T\Val_A)
  &=
  \Val_A \circ T\Val_A \circ \Tpart{2} \kappa_A
  \\
  &= \Val_A \circ T\Val_A \circ \tau_A \circ 
  \kappa_{TA} \circ (\id_R \times \tau_A)
  \\
  &= \Val_A \circ \Val_{TA} \circ 
  \kappa_{TA} \circ (\id_R \times \tau_A)
  \\
  &= \Amod \circ 
  \bigl(\id_R \times (\Val_A \circ \Val_{TA} \circ \tau_A) \bigr)
  \\
  &= \Amod \circ 
  \bigl(\id_R \times (\Val_A \circ T\Val_{TA})\bigr)
  \\
  &= \Amod \circ (\id_R \times \Val_A) \circ (\id_R \times T\Val_{TA})
  \,,
\end{split} 
\end{equation*}
where we have used Diagram~\eqref{eq:T2mA02}, Diagram~\eqref{diag:ScalarMult3}, Equation~\eqref{eq:TEtaEtaTtau}, Diagram~\eqref{eq:T2mA04}, and finally again Equation~\eqref{eq:TEtaEtaTtau}. Since $\Val_A$ is a split epimorphism, so is $T\Val_A$. It follows that $\id_R \times T\Val_A$ is an epimorphism, so that it can be cancelled on both sides of the equation. We conclude that
\begin{equation*}
  \Val_A \circ \Tpart{2} \Amod = \Amod \circ (\id_R \times \Val_A)
  \,,
\end{equation*}
which finishes the proof.
\end{proof}

In other words, Proposition~\ref{prop:RmodLinearization} states that the tangent map of the $R$-multiplication of a tangent-stable $R$-module is the $R$-multiplication itself. 

\begin{Proposition}
\label{prop:LeibnizRuleR}
In a cartesian tangent category with scalar $R$-multiplication, the multiplication $\Rmult$ and the addition $\Radd$ of $R$ satisfy
\begin{equation*}
  \Val_R \circ T\Rmult
  = 
  \Radd \circ \bigl( \Rmult \circ (\Val_R \times \pi_R), 
  \Rmult \circ (\pi_R \times \Val_R) \bigr)
  \circ \chi_{R,R} 
  \,.
\end{equation*}
\end{Proposition}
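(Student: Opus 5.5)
Our strategy is to split $T\Rmult$ into its two partial tangent morphisms with Proposition~\ref{prop:TSumPartials}, and then compute $\Val_R$ of each partial derivative separately. Writing $\Rmult: R \times R \to R$, Proposition~\ref{prop:TSumPartials} gives
\begin{equation*}
  T\Rmult \circ \chi_{R,R}^{-1}
  = +_R \circ \bigl( \Tpart{1}\Rmult \circ (\id_{TR} \times \pi_R),
  \Tpart{2}\Rmult \circ (\pi_R \times \id_{TR}) \bigr)
  \,.
\end{equation*}
We compose with $\Val_R$ and use the left diagram of~\eqref{eq:DiffObj1}, which says that $\Val_R$ turns the fiberwise addition $+_R$ into $\Radd$. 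This reduces the claim to two identities:
\begin{equation*}
  \Val_R \circ \Tpart{2}\Rmult = \Rmult \circ (\id_R \times \Val_R)
  \,,\qquad
  \Val_R \circ \Tpart{1}\Rmult = \Rmult \circ (\Val_R \times \id_R)
  \,.
\end{equation*}
Precomposing these with $(\pi_R \times \id_{TR})$ and $(\id_{TR} \times \pi_R)$ respectively, and then with $\chi_{R,R}$, reproduces exactly the two summands $\Rmult\circ(\pi_R\times\Val_R)$ and $\Rmult\circ(\Val_R\times\pi_R)$ of the statement.

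For the second partial, we regard $R$ as an $R$-module over itself with $\Amod = \Rmult$. By definition $R$ is tangent-stable, so Proposition~\ref{prop:RmodLinearization} applies directly and yields $\Val_R \circ \Tpart{2}\Rmult = \Rmult \circ (\id_R \times \Val_R)$.

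For the first partial, we use commutativity of $R$: $\Rmult = \Rmult \circ \sigma$, where $\sigma: R\times R \to R \times R$ is the swap. We apply the functoriality~\eqref{eq:T2functorial} of partial tangent morphisms, in its evident version for the symmetry, i.e.\ $\Tpart{1}(f\circ\sigma) = \Tpart{2}f \circ \sigma'$, which follows from naturality of $\chi$ with respect to $\sigma$. This gives $\Tpart{1}\Rmult = \Tpart{2}\Rmult \circ \sigma'$, where $\sigma': TR \times R \to R \times TR$ is the swap. Composing with $\Val_R$ and using the previous step, we get $\Val_R\circ\Tpart{1}\Rmult = \Rmult\circ(\id_R\times\Val_R)\circ\sigma' = \Rmult\circ\sigma\circ(\Val_R\times\id_R) = \Rmult\circ(\Val_R\times\id_R)$, using commutativity once more.

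The only genuinely nontrivial input is Proposition~\ref{prop:RmodLinearization}, which already encodes axiom~\eqref{diag:ScalarMult3}. The main point requiring care is the bookkeeping in the first step: we must check that $\Val_R$ applied to $+_R$ of a pair in $T_2R$ equals $\Radd$ of the $\Val_R$'s, with the pair formed by the two partials over the common base point $\Rmult\circ(\pi_R\times\pi_R)$. We also need the swap identity for $\Tpart{1}$ versus $\Tpart{2}$, which we verify directly from the definitions via $\chi_{R,R}\circ T\sigma = \sigma_T\circ\chi_{R,R}$.
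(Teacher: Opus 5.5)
Your proposal is correct and follows the paper's proof: you decompose $T\Rmult$ with Proposition~\ref{prop:TSumPartials}, use the left diagram of~\eqref{eq:DiffObj1} to turn $+_R$ into $\Radd$, obtain the second partial from Proposition~\ref{prop:RmodLinearization} applied to $R$ as a module over itself, and obtain the first partial from commutativity. Your explicit swap identity $\Tpart{1}\Rmult = \Tpart{2}\Rmult \circ \sigma'$, which you derive from the naturality of $\chi$, fills in the step that the paper covers only with ``since $\Rmult$ is commutative''.
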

\begin{proof}
From Proposition~\ref{prop:RmodLinearization} we obtain the relation
\begin{equation}
\label{eq:LeibnizRuleR1}
  \Val_R \circ \Tpart{2} \Rmult = \Rmult \circ (\id_R \times \Val_R)
  \,.
\end{equation}
Since $\Rmult$ is commutative, we also have
\begin{equation}
\label{eq:LeibnizRuleR2}
  \Val_R \circ \Tpart{1} \Rmult = \Rmult \circ (\Val_R \times \id_R)
  \,.
\end{equation}
From Proposition~\ref{prop:TSumPartials} we deduce
\begin{equation*}
\begin{split}
  \Val_R \circ T\Rmult
  &= \Val_R \circ +_R \circ \bigl(
     \Tpart{1} \Rmult \circ (\id_{TR} \times \pi_R), 
     \Tpart{2} \Rmult \circ (\pi_R \times \id_{TR}) \bigr)
     \circ \chi_{R,R}
  \\
  &= \Radd \circ (\Val_R \times \Val_R) \circ \bigl(
     \Tpart{1} \Rmult \circ (\id_{TR} \times \pi_R), 
     \Tpart{2} \Rmult \circ (\pi_R \times \id_{TR}) \bigr)
     \circ \chi_{R,R}
  \\
  &= \Radd \circ \bigl(
     \Val_R \circ \Tpart{1} \Rmult \circ (\id_{TR} \times \pi_R), 
     \Val_R \circ \Tpart{2} \Rmult \circ (\pi_R \times \id_{TR}) \bigr)
     \circ \chi_{R,R}
  \\
  &= \Radd \circ \bigl(
     \Rmult \circ (\Val_R \times \id_R) \circ (\id_{TR} \times \pi_R),
  \\
  &{}\qquad\qquad
     \Rmult \circ (\id_R \times \Val_R) \circ (\pi_R \times \id_{TR}) \bigr)
     \circ \chi_{R,R}
  \\
  &= \Radd \circ \bigl(
     \Rmult \circ (\Val_R \times \pi_R), 
     \Rmult \circ (\pi_R \times \Val_R)
     \bigr)
     \circ \chi_{R,R}
  \,,
\end{split}
\end{equation*}
where we have used the first diagram in~\eqref{eq:DiffObj1} and Equations~\eqref{eq:LeibnizRuleR1} and \eqref{eq:LeibnizRuleR2}.
\end{proof}

\section{Vector fields and scalar functions}
\label{sec:VecActFunc}


\subsection{Naturality of the Lie bracket}

\begin{Definition}
\label{def:fRel}
Let $\phi:X \to Y$ be a morphism in a tangent category. A vector field $v$ on $X$ is called \textbf{$\phi$-related} to a vector field $v'$ on $Y$ if the diagram
\begin{equation}
\label{diag:ProjVect}
\begin{tikzcd}
X \ar[r, "v"] \ar[d, "\phi"']
&
TX \ar[d, "T\phi"]
\\
Y \ar[r, "v'"'] 
&
TY
\end{tikzcd}
\end{equation}
commutes. 
\end{Definition}

\begin{Proposition}
\label{prop:ProjectVectorFields}
Let $\phi: X \to Y$ be a morphism in a tangent category. If two vector fields $v$ and $w$ on $X$ are $\phi$-related to vector fields $v'$ and $w'$ on $Y$, respectively, then $[v,w]$ is $\phi$-related to $[v', w']$.
\end{Proposition}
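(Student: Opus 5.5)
The plan is to show that $T^2\phi$ intertwines the morphisms $\delta(v,w)$ and $\delta(v',w')$ of~\eqref{eq:DeltaOrig}. Then we use the uniqueness in the universal property of the pullback~\eqref{diag:lambda2pullback} defining $[v,w]$ via~\eqref{eq:deltaBracketRel}.

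First, apply the tangent functor to the relation $T\phi\circ w = w'\circ\phi$, which gives $T^2\phi\circ Tw = Tw'\circ T\phi$. Composing with $v$ and using $T\phi\circ v = v'\circ\phi$, we obtain
\begin{equation*}
  T^2\phi\circ Tw\circ v = Tw'\circ v'\circ\phi.
\end{equation*}
For the second term, naturality of $\tau$ gives $T^2\phi\circ\tau_X = \tau_Y\circ T^2\phi$. Together with $T^2\phi\circ Tv = Tv'\circ T\phi$ and $T\phi\circ w = w'\circ\phi$, this yields
\begin{equation*}
  T^2\phi\circ\tau_X\circ Tv\circ w = \tau_Y\circ Tv'\circ w'\circ\phi.
\end{equation*}
Both pairs lie in the fiber product over $\pi_{TX}$, respectively $\pi_{TY}$, by~\eqref{eq:DeltaKerOrig}. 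Naturality of the difference $-$ from~\eqref{eq:Difference}, applied to the morphism $T\phi$ (that is, $T^2\phi\circ -_{TX} = -_{TY}\circ (T^2\phi\times_{T\phi}T^2\phi)$), then gives
\begin{equation*}
  T^2\phi\circ\delta(v,w) = \delta(v',w')\circ\phi.
\end{equation*}

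Next, by naturality of $\lambda_2$, which is built from the natural transformations $\tau$, $+$, $0$, $\lambda$ in~\eqref{eq:VertLiftExt}, we have
\begin{equation*}
  T^2\phi\circ(\lambda_2)_X = (\lambda_2)_Y\circ(T\phi\times_\phi T\phi).
\end{equation*}
Hence
\begin{equation*}
\begin{split}
  \delta(v',w')\circ\phi
  &= T^2\phi\circ(\lambda_2)_X\circ(w,[v,w])
  \\
  &= (\lambda_2)_Y\circ\bigl(T\phi\circ w,\; T\phi\circ[v,w]\bigr)
  \\
  &= (\lambda_2)_Y\circ\bigl(w'\circ\phi,\; T\phi\circ[v,w]\bigr).
\end{split}
\end{equation*}
On the other hand, by definition, $\delta(v',w')\circ\phi = (\lambda_2)_Y\circ(w'\circ\phi,[v',w']\circ\phi)$.

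The final step is to cancel $(\lambda_2)_Y$. Since~\eqref{diag:lambda2pullback} is a pullback, $(\lambda_2)_Y$ is a monomorphism, being the base change of the monomorphism $0_Y$, which is split by $\pi_Y$. Cancelling gives $T\phi\circ[v,w] = [v',w']\circ\phi$, which is the claim.

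The main point requiring care is the naturality of $\lambda_2$ and of $-$ with respect to arbitrary morphisms. This involves the induced maps on fiber products $T_2$ and $T_2T$, and checking that the pairs fed into them are indeed compatible over the right projections. This is routine, since all ingredients are natural transformations and fiber products are pointwise, but it is the only place where bookkeeping could go wrong.
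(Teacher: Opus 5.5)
Your proof is correct and follows essentially the same route as the paper. You show $T^2\phi\circ\delta(v,w)=\delta(v',w')\circ\phi$ via functoriality of $T$ and naturality of $\tau$ and $-$, then use naturality of $\lambda_2$ and cancel the monomorphism $(\lambda_2)_Y$; your direct appeal to the universal property of the fiber products replaces the paper's inner/outer-square lemma. One small citation slip: that the pairs land in $T^2X\times_{TX}T^2X$ follows from $\pi_{TX}\circ Tw\circ v = w = \pi_{TX}\circ\tau_X\circ Tv\circ w$ (using $\pi T\circ\tau = T\pi$), not from \eqref{eq:DeltaKerOrig}, which concerns $\delta(v,w)$ itself.
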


\begin{proof}
Consider the following diagram:
\begin{equation}
\label{diag:Proj1}
\begin{tikzcd}
X
\ar[r, "{(v, w)}"]
\ar[d, "\phi"']
&
TX \times TX 
\ar[r, "Tw \times Tv"]
\ar[d, "T\phi \times T\phi"]
&[1em]
T^2 X \times T^2 X
\ar[r, "{\id_{T^2X} \times \tau_X}"]
\ar[d, "{T^2\phi \times T^2\phi}"]
&[2em]
T^2 X \times T^2 X
\ar[d, "{T^2\phi \times T^2\phi}"]
\\
Y 
\ar[r, "{(v',w')}"']
&
TY \times TY 
\ar[r, "Tw' \times Tv'"']
&
T^2 Y \times T^2 Y
\ar[r, "{\id_{T^2Y} \times \tau_Y}"']
&
T^2 Y \times T^2 Y
\end{tikzcd}
\end{equation}
The left square and the center square commute by~\eqref{diag:ProjVect} and the functoriality of $T$. The right square commutes by the naturality of $\tau$. It follows that the outer rectangle commutes.

Now, consider the following diagram:
\begin{equation}
\label{diag:Proj2}
\begin{tikzcd}[column sep=2.5em, row sep=2.5ex]
X 
\ar[rrr, "{(Tw \, \circ \, v, \, \tau_X \circ Tv \, \circ \, w)}"]
\ar[ddd, "\phi"'] 
\ar[dr, "\id_X"']
& & &[-4em]
T^2X \times_{TX} T^2X 
\ar[ddd, "T^2\phi \times_{T\phi} T^2\phi"] 
\ar[dl, "i"]
\\
&
X 
\ar[d, "\phi"'] 
\ar[r] 
&
T^2X \times T^2X \ar[d, "T^2\phi \times T^2\phi"]
&
\\
&
Y \ar[r] &
T^2Y \times T^2Y
&
\\
Y \ar[rrr, "{(Tw' \circ \, v', \, \tau_Y \circ Tv' \circ \, w')}"']
\ar[ur, "\id_Y"] 
& & &
T^2Y \times_{TY} T^2Y
\ar[ul, "{i'}"']
\end{tikzcd}    
\end{equation}
The inner square is the commutative outer rectangle of Diagram~\eqref{diag:Proj1}. The left trapezoid commutes trivially. The top and bottom trapezoids commute by the universal property of pullbacks and products. The right trapezoid commutes by the naturality of the inclusions $i$ and $i'$. The map $i'$ is a monomorphism since it is the pullback of the diagonal monomorphism $\Delta: TY \to TY \times TY$ by $\pi_Y \times \pi_Y$. It follows from Lemma~\ref{lem:InnerOuterSquares} that the outer square of Diagram~\eqref{diag:Proj2} commutes. 

Composing the horizontal morphisms of the outer commutative square of Di\-a\-gram~\eqref{diag:Proj2} with the differences $-_{TX}$ and $-_{TY}$, we get the following diagram:
\begin{equation}
\label{diag:Proj3}
\begin{tikzcd}
X 
\ar[r, "{(Tw \, \circ \, v, \, \tau_X \circ Tv \, \circ \, w)}"] 
\ar[d, "\phi"']
&[5em]
T^2X \times_{TX} T^2X 
\ar[r, "-_{TX}"]
\ar[d, "T^2\phi \times_{T\phi} T^2\phi"]
&
T^2X 
\ar[d, "T^2\phi"]
\\
Y 
\ar[r, "{(Tw' \circ \, v', \, \tau_Y \circ Tv' \circ \, w')}"'] 
&
T^2Y \times_{TY} T^2Y 
\ar[r, "-_{TY}"']
&
T^2Y
\end{tikzcd}
\end{equation}
The right square commutes by the naturality of $-$. Thus, the outer rectangle of Diagram~\eqref{diag:Proj3} commutes. The composition of the top horizontal morphisms is $\delta(v,w)$ and that of the bottom horizontal morphisms is $\delta(v',w')$.

Now, consider the following diagram:
\begin{equation}
\label{diag:Proj4}
\begin{tikzcd}[column sep=2.5em, row sep=2.5ex]
X 
\ar[rrr, "{(v,[v,w])}"] 
\ar[ddd, "\phi"'] 
\ar[dr, "\id_X"']
& & &[-2em]
TX \times_X TX 
\ar[ddd, "T\phi \times_\phi T\phi"] 
\ar[dl, "(\lambda_2)_X"]
\\
&
X 
\ar[d, "\phi"'] 
\ar[r, "{\delta(v,w)}"] 
&
T^2X \ar[d, "T^2\phi"]
&
\\
&
Y \ar[r, "{\delta(v',w')}"'] &
T^2Y
&
\\
Y \ar[rrr, "{(w',[v',w'])}"']
\ar[ur, "\id_Y"] 
& & &
TY \times_Y TY
\ar[ul, "(\lambda_2)_Y"']
\end{tikzcd}    
\end{equation}
The inner square is the commutative outer rectangle of~\eqref{diag:Proj3}. The right trapezoid commutes by the naturality of $\lambda_2$. The commutativity of the upper and lower trapezoids follows from~\eqref{eq:deltaBracketRel}. Since monomorphisms are stable under pullbacks and the zero section is a monomorphism, it follows from Diagram~\eqref{diag:lambda2pullback} that the map $(\lambda_2)_Y$ is a monomorphism. Thus, the outer square of Diagram~\eqref{diag:Proj4} commutes by Lemma~\ref{lem:InnerOuterSquares}. 

Lastly, by projecting to the second factor, we get that $T\phi \circ [v,w] = [v',w'] \circ \phi$. We conclude that the bracket $[v,w]$ is $\phi$-related to $[v',w']$.
\end{proof}

\subsection{Action of vector fields on functions}

Let $R$ be a ring object in $\calC$ with addition $\Radd$, zero $\Rzero$, multiplication $\Rmult$, and unit $\Runit$. Recall from Example~\ref{ex:PointsOfR} that the set $\Hom(X,R)$ of $R$-valued morphisms is equipped with a ring structure given by
\begin{equation*}
  f + g \coloneqq \Radd \circ (f,g)
  \,,\qquad
  fg \coloneqq \Rmult \circ (f,g)
\end{equation*}
for all morphisms $f, g \in \Hom(X,R)$. 
Assume that $R$ is tangent-stable. Then we can define an action of vector fields on functions by
\begin{equation}
\label{eq:VecFieldFunc}
  v \cdot f: X \xrightarrow{~v~} TX \xrightarrow{~Tf~} TR \xrightarrow{~\Val_R~} R 
\end{equation}
for all $v \in \Gamma(X,TX)$ and $f \in \Hom(X,R)$.

\begin{Proposition}
\label{prop:VecActFunc}
In a cartesian tangent category with scalar $R$-multiplication, the action~\eqref{eq:VecFieldFunc} is a representation of the Lie algebra of vector fields by derivations on the ring of $R$-valued functions. That is,
\begin{subequations}
\begin{align}
\label{eq:VecActFuncA}
  v \cdot (f + g)
  &= v \cdot f + v \cdot g
  \\
\label{eq:VecActFuncB}
  (v+w) \cdot f
  &=
  v \cdot f + w \cdot f
  \\
\label{eq:VecActFunc1}
  [v,w] \cdot f 
  &= v \cdot (w \cdot f) - w \cdot (v \cdot f)
  \\
\label{eq:VecActFunc2}
  v \cdot (fg) 
  &= (v \cdot f)\,g + f\,(v \cdot g)
  \,,
\end{align}    
\end{subequations}
for all $v, w \in \Gamma(X,TX)$ and $f,g \in \Hom(X,R)$.
\end{Proposition}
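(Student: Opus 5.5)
We prove the four identities in order of increasing difficulty.

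\emph{Additivity in $f$, \eqref{eq:VecActFuncA}.} We write $f+g = \Radd \circ (f,g)$, so that $T(f+g) = T\Radd \circ \chi_{R,R}^{-1} \circ (Tf,Tg)$. The left diagram of~\eqref{eq:DiffObj2} gives
\begin{equation*}
  \Val_R \circ T\Radd = \Radd \circ (\Val_R \times \Val_R) \circ \chi_{R,R}\,.
\end{equation*}
Composing with $v$ yields $v\cdot(f+g) = \Radd\circ(v\cdot f, v\cdot g)$.

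\emph{Additivity in $v$, \eqref{eq:VecActFuncB}.} Here $v+w = +_X\circ(v,w)$. Naturality of $+$ gives $Tf\circ +_X = +_R\circ(Tf\times_f Tf)$. The left diagram of~\eqref{eq:DiffObj1} turns $\Val_R\circ +_R$ into $\Radd\circ(\Val_R\times\Val_R)$, which is the claim.

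\emph{Leibniz rule, \eqref{eq:VecActFunc2}.} We have $fg = \Rmult\circ(f,g)$, so
\begin{equation*}
  v\cdot(fg) = \Val_R\circ T\Rmult\circ\chi_{R,R}^{-1}\circ(Tf,Tg)\circ v\,.
\end{equation*}
We apply Proposition~\ref{prop:LeibnizRuleR}. Using $\pi_R\circ Tf\circ v = f\circ\pi_X\circ v = f$, and likewise for $g$, this gives $\Radd\circ\bigl(\Rmult\circ(v\cdot f, g), \Rmult\circ(f, v\cdot g)\bigr)$, as required. This is where the scalar multiplication axioms enter, through Proposition~\ref{prop:RmodLinearization}.

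\emph{Bracket identity, \eqref{eq:VecActFunc1}.} This is the main step. The plan is to compose the defining equation~\eqref{eq:deltaBracketRel} with $\Val_R\circ T\Val_R\circ T^2 f$ and compute both sides.

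First we treat the right side. By naturality of $\lambda_2$,
\begin{equation*}
  T^2f\circ(\lambda_2)_X\circ(w,[v,w]) = (\lambda_2)_R\circ(Tf\circ w, Tf\circ[v,w])\,.
\end{equation*}
Then~\eqref{eq:EtaTEtalambda2} turns this into $\Val_R\circ Tf\circ[v,w] = [v,w]\cdot f$.

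Next we treat the left side. Naturality of $-$ gives
\begin{equation*}
  T^2f\circ\delta(v,w) = -_{TR}\circ\bigl(T^2f\circ Tw\circ v,\ \tau_R\circ T^2f\circ Tv\circ w\bigr)\,,
\end{equation*}
where we used naturality of $\tau$. Corollary~\ref{cor:AaddTminus} splits $\Val_R\circ T\Val_R$ over the difference. The $\tau_R$ is then removed by~\eqref{eq:EtaTEtaSymmetric01}. This leaves the difference of
\begin{equation*}
  \Val_R\circ T\Val_R\circ T^2f\circ Tw\circ v = \Val_R\circ T(\Val_R\circ Tf\circ w)\circ v = v\cdot(w\cdot f)
\end{equation*}
and the analogous term $w\cdot(v\cdot f)$.

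The obstacle we anticipate is bookkeeping rather than a conceptual one. We must check that the pair $(T^2f\circ Tw\circ v,\ \tau_R\circ T^2f\circ Tv\circ w)$ lies in the fiber product $T^2R\times_{TR}T^2R$ over $\pi_{TR}$, in the form required by Corollary~\ref{cor:AaddTminus}. We must also confirm that~\eqref{eq:EtaTEtalambda2} applies with the correct component ordering of $(\lambda_2)_R$. Both checks follow from~\eqref{eq:DeltaKerOrig} and naturality of $\pi$.
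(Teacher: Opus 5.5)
Your proposal is correct and follows the paper's proof essentially step for step: additivity via the left diagrams of \eqref{eq:DiffObj2} and \eqref{eq:DiffObj1}, the derivation property via Proposition~\ref{prop:LeibnizRuleR}, and the bracket identity by composing \eqref{eq:deltaBracketRel} with $\Val_R \circ T\Val_R \circ T^2 f$, then using naturality of $\lambda_2$, $-$ and $\tau$ together with \eqref{eq:EtaTEtalambda2}, Corollary~\ref{cor:AaddTminus} and \eqref{eq:EtaTEtaSymmetric01}. The fiber-product and component-ordering checks you flag are routine and go through exactly as you indicate.
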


\begin{proof}
By a direct calculation, we obtain
\begin{equation*}
\begin{split}
  v \cdot (f+g)
  &=
  \Val_R \circ T\bigl(\Radd \circ (f,g) \bigr) \circ v
  \\
  &=
  \Val_R \circ T\Radd \circ T(f,g) \circ v
  \\
  &=
  \Radd \circ (\Val_R \times \Val_R) \circ 
  \chi_{R,R} \circ T(f,g) \circ v
  \\
  &=
  \Radd \circ (\Val_R \times \Val_R) \circ (Tf,Tg) \circ v
  \\
  &=
  \Radd \circ (\Val_R \circ Tf \circ v, 
               \Val_R \circ Tg \circ v)
  \\
  &=
  v \cdot f + v \cdot g
  \,,
\end{split}
\end{equation*}
where we have used the functoriality of $T$ and the left diagram of~\eqref{eq:DiffObj2}. This shows that Equation~\eqref{eq:VecActFuncA} is satisfied. Similarly,
\begin{equation*}
\begin{split}
  (v + w) \cdot f
  &=
  \Val_R \circ Tf \circ +_X \circ (v,w)
  \\  
  &=
  \Val_R \circ +_R \circ (Tf \times_f Tf) \circ (v,w)
  \\  
  &=
  \Radd \circ (\Val_R \times \Val_R) \circ 
  (Tf \circ v, Tf \circ w)
  \\
  &=
  \Radd \circ (\Val_R \circ Tf \circ v, 
               \Val_R \circ Tf \circ w)
  \\
  &=
  v \cdot f + w \cdot f
  \,,
\end{split}
\end{equation*}
where we have used the naturality of $+_X$ and the left diagram of~\eqref{eq:DiffObj1}. This shows that Equation~\eqref{eq:VecActFuncB} is satisfied.

We now turn to the compatibility of the action with the Lie bracket. For the right side of Equation~\eqref{eq:VecActFunc1}, we express the successive action of two vector fields on a function as
\begin{equation}
\label{eq:VecActFunc01}
\begin{split}
  v \cdot (w \cdot f) 
  &=
  v \cdot ( \Val_R \circ Tf \circ w )
  \\
  &=
  \Val_R \circ T( \Val_R \circ Tf \circ w) \circ v
  \\
  &=
  \Val_R \circ T\Val_R \circ T^2 f \circ Tw \circ v
  \,.
\end{split}
\end{equation}
For the left side of Equation~\eqref{eq:VecActFunc1}, we first show that
\begin{equation*}
\begin{split}
  [v, w] \cdot f
  &=
  \Val_R \circ Tf \circ [v,w]
  \\
  &=
  \Val_R \circ \pr_2 \circ T_2 f \circ (w, [v,w])
  \\
  &=
  \Val_R \circ T\Val_R \circ (\lambda_2)_X \circ 
  T_2 f \circ (w, [v,w])
  \\
  &=
  \Val_R \circ T\Val_R \circ T^2 f \circ 
  (\lambda_2)_X \circ (w, [v,w])
  \\
  &=
  \Val_R \circ T\Val_R \circ T^2 f \circ 
  \delta(v,w)
  \,,
\end{split}
\end{equation*}
where we have used Equation~\eqref{eq:EtaTEtalambda2}, the naturality of $\lambda_2$, and Equation~\eqref{eq:deltaBracketRel}. Starting from this relation, we obtain
\begin{equation*}
\begin{split}
  [v,w] \cdot f
  &=
  \Val_R \circ T\Val_R \circ T^2 f \circ \delta(v,w)
  \\
  &=
  \Val_R \circ T\Val_R \circ T^2 f
  \circ -_{TX} \circ 
  (Tw \circ v, \tau_X \circ Tv \circ w)
  \\
  &=
  \Val_R \circ T\Val_R \circ -_{TR} \circ 
  T_2 T f \circ 
  (Tw \circ v, \tau_X \circ Tv \circ w)
  \\
  &=
  \Val_R \circ T\Val_R \circ -_{TR} \circ 
  (T^2f \circ Tw \circ v, T^2f \circ \tau_X \circ Tv \circ w)
  \\
  &=
  \Rminus \circ 
  (\Val_R \circ T\Val_R \circ T^2 f \circ Tw \circ v,
   \Val_R \circ T\Val_R \circ \tau_R \circ T^2 f \circ Tv \circ w)
  \\
  &=
  \Rminus \circ 
  (\Val_R \circ T\Val_R \circ T^2 f \circ Tw \circ v,
   \Val_R \circ T\Val_R \circ T^2 f \circ Tv \circ w)
  \\
  &=
  v \cdot (w \cdot f) - w \cdot (v \cdot f)
  \,,
\end{split}
\end{equation*}
where we have used the defining Equation~\eqref{eq:DeltaOrig} of $\delta(v,w)$, the naturality of the subtraction $-$, the naturality of $\tau$ and Equation~\eqref{eq:AminusTminus}, Equation~\eqref{eq:EtaTEtaSymmetric01}, and finally~\eqref{eq:VecActFunc01}. We conclude that Equation~\eqref{eq:VecActFunc1} holds.

The derivation property can be checked by a straightforward calculation,
\begin{equation*}
\begin{split}
  v \cdot (fg)
  &= \Val_R \circ T\bigl( \Rmult \circ (f,g) \big) \circ v
  \\
  &= \Val_R \circ T\Rmult \circ T(f,g) \circ v
  \\
  &= \Radd \circ 
  \bigl(\Rmult \circ (\Val_R \times \pi_R), 
        \Rmult \circ (\pi_R \times \Val_R) \bigr) \circ
  \chi_{R,R} \circ T(f,g) \circ v
  \\
  &= \Radd \circ 
  \bigl(\Rmult \circ (\Val_R \times \pi_R), 
        \Rmult \circ (\pi_R \times \Val_R) \bigr) \circ
  (Tf, Tg) \circ v
  \\
  &= \Radd \circ 
  \bigl(\Rmult \circ (\Val_R \circ Tf \circ v, g), 
        \Rmult \circ (f, \Val_R \circ Tg \circ v) \bigr)
  \\
  &= \Radd \circ
  \bigl( (v \cdot f) g, f(v \cdot g) \bigr)
  \\
  &= (v \cdot f) g + f(v \cdot g)
  \,,
\end{split}
\end{equation*}
where in the third step we have used \eqref{eq:LeibnizRuleR1} and in the fifth step that 
\begin{equation*}
  \pi_R \circ Tf \circ v
  = f \circ \pi_X \circ v
  = f
  \,.
\end{equation*}
This finishes the proof.
\end{proof}

\begin{Warning}
Proposition~\ref{prop:VecActFunc} shows that there is a homomorphism from the Lie algebra of vector fields on $X$ to the Lie algebra of derivations of the ring $\Hom(X,R)$, where the Lie bracket is given by the commutator. However, this homomorphism is generally neither injective nor surjective, so that we cannot identify vector fields on $X$ with derivations on the structure ring of $X$.
\end{Warning}

\subsection{Module structure and Leibniz rule}

Assume that the tangent structure has an $R$-module structure $\kappa$, so that $TX \to X$ is an ($X \times R \to X$)-module in $\calC_{/X}$. Applying the functor of sections, we see that $\Gamma(X, TX)$ is equipped with the structure of a module over $\Gamma(X, X \times R) \cong \Hom(X,R)$, given by
\begin{equation}
\label{eq:fvDef}
  fv \coloneqq \kappa_X \circ (f,v)
  \,.
\end{equation}

\begin{Definition}
\label{def:fRelFunc}
Let $\phi:X \to Y$ be a morphism in some category. A morphism $f:X \to R$ will be called \textbf{$\phi$-related} to a morphism $f':Y \to R$ if the diagram
\begin{equation}
\label{diag:ProjFunc}
\begin{tikzcd}
X \ar[r, "f"] \ar[d, "\phi"']
&
R
\\
Y \ar[ru, "f'"'] 
&
\end{tikzcd}
\end{equation}
commutes. 
\end{Definition}

\begin{Proposition}
\label{prop:ProjectFunctions}
Let $\phi: X \to Y$ be a morphism in a cartesian tangent category with scalar $R$-multiplication. If $v \in \Gamma(X,TX)$ is $\phi$-related to $v' \in \Gamma(Y,TY)$ and $f:X \to R$ is $\phi$-related to $f': Y \to R$, then:
\begin{itemize}

\item[(i)] $fv$ is $\phi$-related to $f'v'$;

\item[(ii)] $v \cdot f$ is $\phi$-related to $v' \cdot f'$.

\end{itemize}
\end{Proposition}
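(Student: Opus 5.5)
Both parts are direct diagram chases that unwind the definitions \eqref{eq:fvDef} and \eqref{eq:VecFieldFunc}. The only inputs are the naturality of $\kappa$ and the $\phi$-relatedness hypotheses \eqref{diag:ProjVect} and \eqref{diag:ProjFunc}. No axiom of scalar multiplication beyond the $R$-module structure and the tangent-stability of $R$ (needed for $\Val_R$ to exist) should be required.

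For (i), we have to show $T\phi \circ (fv) = (f'v') \circ \phi$. By definition, $T\phi \circ fv = T\phi \circ \kappa_X \circ (f,v)$. Naturality of $\kappa$ in $X$ gives $T\phi \circ \kappa_X = \kappa_Y \circ (\id_R \times T\phi)$, so this becomes
\begin{equation*}
  \kappa_Y \circ (f, T\phi \circ v)
  = \kappa_Y \circ (f' \circ \phi, v' \circ \phi)
  = \kappa_Y \circ (f', v') \circ \phi
  = (f'v') \circ \phi
  \,,
\end{equation*}
using $f = f' \circ \phi$ and $T\phi \circ v = v' \circ \phi$, followed by the universal property of the product.

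For (ii), we have to show $v \cdot f = (v' \cdot f') \circ \phi$. Functoriality of $T$ gives $Tf = Tf' \circ T\phi$, since $f = f'\circ\phi$. Hence
\begin{equation*}
  v \cdot f
  = \Val_R \circ Tf' \circ T\phi \circ v
  = \Val_R \circ Tf' \circ v' \circ \phi
  = (v' \cdot f') \circ \phi
  \,,
\end{equation*}
which is the required commutativity of \eqref{diag:ProjFunc} for the pair $v\cdot f$, $v'\cdot f'$.

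There is no real obstacle here. The only point that needs care is in (i): the naturality square of $\kappa$ with respect to $\phi$ must be applied with the correct factor $\id_R \times T\phi$, and the pairing $(f, T\phi\circ v)$ must then be rewritten as $(f',v')\circ\phi$.
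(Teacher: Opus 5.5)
Your proof is correct and follows the paper's argument: the paper draws the same chains as commutative diagrams, using the relatedness hypotheses together with naturality of $\kappa$ for (i), and functoriality of $T$ applied to $f = f'\circ\phi$ for (ii). Your remark that only the $R$-module structure and tangent-stability of $R$ are needed, and not the further scalar-multiplication axioms, is also accurate.
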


\begin{proof}
For (i) we consider the following diagram:
\begin{equation*}
\begin{tikzcd}
X 
\ar[r, "{(f,v)}"] 
\ar[d, "\phi"']
&
R \times TX 
\ar[r, "\kappa_X"]
\ar[d, "\id_R \times T\phi"']
&
TX 
\ar[d, "T\phi"]
\\
Y 
\ar[r, "{(f',v')}"'] 
&
R \times TY 
\ar[r, "\kappa_Y"']
& 
TY
\end{tikzcd}
\end{equation*}
where $\kappa$ denotes the scalar $R$-multiplication. The square on the left commutes since $f$ and $v$ are $\phi$-related to $f'$ and $v'$, respectively. The square on the right commutes by the naturality of the scalar multiplication. The commutativity of the outer rectangle tells us that $fv$ is $\phi$-related to $f'v'$.

For (ii) we consider the following diagram:
\begin{equation*}
\begin{tikzcd}
X 
\ar[r, "v"] 
\ar[d, "\phi"']
&
TX 
\ar[r, "Tf"]
\ar[d, "T\phi"']
&
TR 
\ar[r, "\Val_R"]
\ar[d, "\id"]
& R
\ar[d, "\id"]
\\
Y 
\ar[r, "v'"'] 
&
TY 
\ar[r, "Tf'"']
& 
TR
\ar[r, "\Val_R"']
&
R
\end{tikzcd}
\end{equation*}
The square on the left commutes since $v$ is $\phi$-related to $v'$. The square in the middle commutes since $f$ is $\phi$-related to $f'$ and since $T$ is a functor. The square on the right commutes trivially. Thus, the outer rectangle commutes. The composition of the upper horizontal arrows is $v \cdot f$ and that of the lower horizontal arrows is $v' \cdot f'$. We conclude that $v \cdot f$ is $\phi$-related to $v' \cdot f'$.
\end{proof}

\begin{Proposition}
\label{prop:LeibnizRule}
In a cartesian tangent category with scalar $R$-multiplication, the \textbf{Leibniz rule}
\begin{equation}
\label{eq:LeibnizRule}
  [v,fw] = (v \cdot f)w + f[v,w] 
\end{equation}
holds for all vector fields $v, w \in \Gamma(X,TX)$ and all functions $f: X \to R$.
\end{Proposition}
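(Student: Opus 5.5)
We work at the level of the defining relation \eqref{eq:deltaBracketRel}. Since $(\lambda_2)_X$ is a monomorphism (it is a pullback of the zero section by Diagram~\eqref{diag:lambda2pullback}), it suffices to show
\begin{equation*}
  \delta(v,fw) = (\lambda_2)_X \circ \bigl(fw,\, (v\cdot f)w + f[v,w]\bigr)\,.
\end{equation*}
By the linearity of $\lambda_2$ in the second argument, Diagram~\eqref{diag:lambda2linear}, the right side equals $+_{TX}$ applied to $(\lambda_2)_X\circ(fw,(v\cdot f)w)$ and $(\lambda_2)_X\circ(fw,f[v,w])$. So the plan is to compute $\delta(v,fw) = -_{TX}\circ\bigl(T(fw)\circ v,\ \tau_X\circ Tv\circ fw\bigr)$ and split it into these two pieces.

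First I compute $T(fw)\circ v = T\kappa_X\circ T(f,w)\circ v$. Using Proposition~\ref{prop:TSumPartials} for $\kappa_X: R\times TX\to TX$, this becomes the sum in $T^2X\to TX$ of $\Tpart{1}\kappa_X\circ(Tf\circ v, w)$ and $\Tpart{2}\kappa_X\circ(f, Tw\circ v)$. For the first summand, Axiom~\eqref{diag:ScalarMult2} gives $(\lambda_2)_X\circ\bigl(\kappa_X(f,w),\kappa_X(v\cdot f,w)\bigr) = (\lambda_2)_X\circ(fw,(v\cdot f)w)$, using $\pi_R\circ Tf\circ v=f$ and $\Val_R\circ Tf\circ v = v\cdot f$. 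For the second summand, Axiom~\eqref{diag:ScalarMult3} gives $\tau_X\circ\kappa_{TX}\circ(f,\tau_X\circ Tw\circ v)$. Next, $\tau_X\circ Tv\circ(fw) = \tau_X\circ Tv\circ\kappa_X\circ(f,w)$; by naturality of $\kappa$ (applied to $v:X\to TX$), $Tv\circ\kappa_X = \kappa_{TX}\circ(\id_R\times Tv)$, so this equals $\tau_X\circ\kappa_{TX}\circ(f,Tv\circ w)$. Using $\tau_X\circ\tau_X=\id$, both expressions involve $\tau_X\circ\kappa_{TX}\circ(f,\,\cdot\,)$, and $\tau_X$ and $\kappa_{TX}$ are additive with respect to the appropriate bundle structures (the right diagram of \eqref{eq:TanFun2} and Proposition~\ref{prop:RModuleBundle}(v)). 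Hence their difference is $\tau_X\circ\kappa_{TX}\circ\bigl(f,\ -_{TX}\text{-type difference of }\tau_X Tw\, v \text{ and } Tv\, w\bigr)$, which is $\tau_X\circ\kappa_{TX}\circ\bigl(f,\tau_X\circ\delta(v,w)\bigr)$ after rewriting via $\tau$ (which swaps the two additive structures on $T^2X$). Finally, I will show $\tau_X\circ\kappa_{TX}\circ(\id_R\times\tau_X)\circ(\id_R\times\lambda_2) = \lambda_2\circ(\text{scalar mult. on second slot})$. This follows from Axiom~\eqref{diag:ScalarMult1} together with the definition~\eqref{eq:VertLiftExt} of $\lambda_2$ and the fact that scalar multiplication fixes the zero section \eqref{diag:RzeroPreserve2}. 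It yields $(\lambda_2)_X\circ(fw,f[v,w])$ once I substitute $\delta(v,w)=(\lambda_2)_X\circ(w,[v,w])$.

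The main obstacle is the bookkeeping of the two different additions on $T^2X$ (over $\pi_{TX}$ versus over $T\pi_X$) interleaved with $\tau$. $\delta$ uses $-_{TX}$ over $\pi_{TX}$, while Proposition~\ref{prop:TSumPartials} applied to $\kappa_X$ produces $+_{TX}$ in the sense of the tangent bundle of $TX$. I would handle this by consistently using the interchange law between the two additions (a consequence of naturality of $+$ and $\nu_2$) and the relation $\tau\circ T(+)=(+T)\circ(\tau\times\tau)$. The cross terms then cancel because one component is a $T0$-type zero. Verifying that the first summand $(\lambda_2)(fw,(v\cdot f)w)$ survives with the correct base point $fw$ is where I expect most care to be needed.
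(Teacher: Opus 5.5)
Your outline follows the paper's proof step for step: cancel the monomorphism $(\lambda_2)_X$, expand $T(fw)\circ v$ with Proposition~\ref{prop:TSumPartials}, treat the $\Tpart{1}\kappa_X$-term with \eqref{diag:ScalarMult2}, rewrite both remaining terms as $\Tpart{2}\kappa_X\circ(f,\cdot)$ using naturality of $\kappa$ and \eqref{diag:ScalarMult3}, and finish with \eqref{diag:ScalarMult1} and \eqref{eq:VertLiftExt}.

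The last step, however, does not work as you justify it. Write
\[
\tau_X\circ(\lambda_2)_X\circ(w,[v,w]) = +_{TX}\circ(T0_X\circ w,\,\lambda_X\circ[v,w]).
\]
The summand $T0_X\circ w$ is \emph{not} in the zero section of $\pi_{TX}\colon T^2X\to TX$. For $U\subset\bbR^n$ one has $T0_U(x,x_1)=(x,0,x_1,0)$, whereas the zero over $(x,0)$ is $(x,0,0,0)$. So \eqref{diag:RzeroPreserve2} says nothing about this summand. The ingredients you need are:
\begin{enumerate}
\item Proposition~\ref{prop:RModuleBundle}(v) for $\kappa_{TX}$, to distribute over this $+_{TX}$;
\item naturality of $\kappa$ along $0_X\colon X\to TX$, which gives $\kappa_{TX}\circ(f,T0_X\circ w)=T0_X\circ(fw)$;
\item \eqref{diag:ScalarMult1} for the $\lambda_X$-summand.
\end{enumerate}
Hence $\tau_X\circ\kappa_{TX}\circ(\id_R\times\tau_X\circ\lambda_2)$ is $\lambda_2$ precomposed with scalar multiplication on \emph{both} slots of $T_2X$, not only the second. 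With your phrase ``scalar mult.\ on second slot'' and the appeal to \eqref{diag:RzeroPreserve2}, you would obtain $(\lambda_2)_X\circ(w,f[v,w])$. That lies over $w$ rather than $fw$, so it cannot even be added to $(\lambda_2)_X\circ(fw,(v\cdot f)w)$. The base-point issue you anticipate therefore sits in this summand. The first summand comes out of \eqref{diag:ScalarMult2} directly.

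There are two smaller corrections. First, your ``main obstacle'' does not exist. The subtraction in $\delta$ and the addition produced by Proposition~\ref{prop:TSumPartials} for $\kappa_X\colon R\times TX\to TX$ are the same operation $+_{TX}$ on the bundle $\pi_{TX}\colon T^2X\to TX$. You only need to regroup $(A+B)-C=A+(B-C)$ in that abelian group; no interchange law or cancellation of cross terms is required.

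Second, for the difference of the two $\Tpart{2}\kappa_X$-terms, the paper simply uses the linearity \eqref{eq:T2linear} of $\Tpart{2}\kappa_X$ in its second argument to get $\Tpart{2}\kappa_X\circ(f,\delta(v,w))$. Your detour through conjugation by $\tau$ also works, but it then needs $\kappa_{TX}$ to be additive for $T(+_X)$ over $T\pi_X$. That is not Proposition~\ref{prop:RModuleBundle}(v), which concerns $+_{TX}$ over $\pi_{TX}$. It does hold, by naturality of $\kappa$ applied to $+_X$ and to the two projections $T_2X\to TX$.
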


\begin{proof}
In~\eqref{eq:DeltaOrig} we have defined
\begin{equation}
\label{eq:LeibRuleVecField01}
  \delta(v, fw)
  = -_{TX} \circ 
  \bigl( T(fw) \circ v, \tau_X \circ Tv \circ fw \bigr)
  \,,
\end{equation}
which is related to the Lie bracket by~\eqref{eq:deltaBracketRel}. Using the axioms of the scalar multiplication, we obtain the following relation
\begin{equation*}
\begin{split}
  T(fw)
  &=
  T\bigl( \kappa_X \circ (f,w) \bigr)
  \\
  &=
  T\kappa_X \circ T(f,w)
  \\
  &=
  +_{TX} \circ \bigl(
  \Tpart{1} \kappa_X \circ (\id_{TR} \times \pi_{TX}),
  \Tpart{2} \kappa_X \circ (\pi_R \times \id_{T^2X})
  \bigr)
  \circ \chi_{R,TX} \circ T(f,w)
  \\
  &=
  +_{TX} \circ \bigl(
  \Tpart{1} \kappa_X \circ (\id_{TR} \times \pi_{TX}),
  \Tpart{2} \kappa_X \circ (\pi_R \times \id_{T^2X})
  \bigr)
  \circ (Tf,Tw)
  \\
  &=
  +_{TX} \circ \bigl(
  \Tpart{1} \kappa_X \circ (Tf, \pi_{TX} \circ Tw),
  \Tpart{2} \kappa_X \circ (\pi_R \circ Tf, Tw)
  \bigr)
  \\
  &=
  +_{TX} \circ \bigl(
  \Tpart{1} \kappa_X \circ (Tf, w \circ \pi_X),
  \Tpart{2} \kappa_X \circ (f \circ \pi_X, Tw)
  \bigr)
  \,,
\end{split}
\end{equation*}
where we have used Proposition~\ref{prop:TSumPartials} and in the last step the naturality of $\pi$. Precomposing with $v$, we obtain the sum
\begin{equation}
\label{eq:LeibRuleVecField02}
  T(fw) \circ v
  =  
  +_{TX} \circ \bigl(
  \Tpart{1} \kappa_X \circ (Tf \circ v, w),
  \Tpart{2} \kappa_X \circ (f, Tw \circ v)
  \bigr)
  \,,
\end{equation}
for the first term in~\eqref{eq:LeibRuleVecField01}. The second term in~\eqref{eq:LeibRuleVecField01} can be rewritten as
\begin{equation}
\label{eq:LeibRuleVecField03}
\begin{split}
  \tau_X \circ Tv \circ fw
  &=
  \tau_X \circ Tv \circ \kappa_X \circ (f,w)
  \\
  &=
  \tau_X \circ \kappa_{TX} \circ 
  (\id_R \times Tv) \circ (f,w)
  \\
  &=
  \tau_X \circ \kappa_{TX} \circ (f, Tv \circ w)
  \\
  &=
  \Tpart{2}\kappa_X \circ (\id_R \times \tau_X) \circ 
  (f,Tv \circ w)
  \\
  &=
  \Tpart{2}\kappa_X \circ (f, \tau_X \circ Tv \circ w)
  \,,
\end{split}
\end{equation}
where we have used the naturality of $\kappa_X$ and~\eqref{diag:ScalarMult3}. By inserting~\eqref{eq:LeibRuleVecField02} and~\eqref{eq:LeibRuleVecField03} into~\eqref{eq:LeibRuleVecField01}, we obtain
\begin{equation}
\label{eq:LeibRuleVecField04}
\begin{split}
  \delta(v, fw)
  &=
  -_{TX} \circ \bigl( 
    +_{TX} \circ \bigl(
  \Tpart{1} \kappa_X \circ (Tf \circ v, w),
  \Tpart{2} \kappa_X \circ (f, Tw \circ v)
  \bigr),
  \\
  &{}\qquad
  \Tpart{2}\kappa_X \circ (f, \tau_X \circ Tv \circ w)  
  \bigr)
  \\
  &=
  +_{TX} \circ \bigl( 
  \Tpart{1} \kappa_X \circ (Tf \circ v, w),
  \\
  &{}\qquad
  -_{TX} \circ \bigl(
  \Tpart{2} \kappa_X \circ (f, Tw \circ v),
  \Tpart{2}\kappa_X \circ (f, \tau_X \circ Tv \circ w)
  \bigr)
  \bigr)
  \,,
\end{split}
\end{equation}
where we have used the associativity of addition and subtraction. The first summand of~\eqref{eq:LeibRuleVecField04} can be expressed as
\begin{equation}
\label{eq:LeibRuleVecField05}
\begin{split}
  &
  \Tpart{1} \kappa_X \circ (Tf \circ v, w)
  \\
  ={}&
  (\lambda_2)_X \circ 
    \bigl( \kappa_X \circ (\pr_1, \pr_3), \kappa_X \circ (\pr_2, \pr_3) \bigr) 
    \circ \bigl( (\pi_R, \Val_R) \times \id_{TX} \bigr) \circ 
    (Tf \circ v, w)
  \\
  ={}&
  (\lambda_2)_X \circ 
    \bigl( \kappa_X \circ (\pr_1, \pr_3), \kappa_X \circ (\pr_2, \pr_3) \bigr) 
    \circ (\pi_R \circ Tf \circ v, \Val_R \circ Tf \circ v, w)
  \\
  ={}&
  (\lambda_2)_X \circ 
  \bigl( 
    \kappa_X \circ (f \circ \pi_X \circ v,w), 
    \kappa_X \circ (v \cdot f, w)
  \bigr)
  \\
  ={}&
  (\lambda_2)_X \circ 
  \bigl( 
    \kappa_X \circ (f,w), 
    \kappa_X \circ (v \cdot f, w)
  \bigr)
  \\
  ={}&
  (\lambda_2)_X \circ \bigl(fw, (v \cdot f)w \bigr)
  \,,
\end{split}
\end{equation}
where we have used Diagram~\eqref{diag:ScalarMult2}, the naturality of $\pi$, Definition~\eqref{eq:VecFieldFunc} of the action of vector fields on functions, and that $v$ is a vector field. For the second term we obtain
\begin{equation}
\label{eq:LeibRuleVecField06}
\begin{split}
  {}&
  -_{TX} \circ \bigl( 
  \Tpart{2}\kappa_X \circ (f, Tw \circ v),   \Tpart{2}\kappa_X \circ (f, \tau_X \circ Tv \circ w)
  \bigr)
  \\
  ={}&
  \Tpart{2}\kappa_X \circ \bigl(f, 
  -_{TX} \circ (Tw \circ v, \tau_X \circ Tv \circ w) \bigr)
  \\
  ={}&
  \Tpart{2}\kappa_X \circ \bigl(f, \delta(v,w) \bigr)
  \\
  ={}&    
  \tau_X \circ \kappa_{TX} \circ \bigl(f,  
    \tau_X \circ (\lambda_2)_X \circ (w, [v,w])
    \bigr)
  \\
  ={}&
  \tau_X \circ \kappa_{TX} \circ \bigl(f,  
    +_{TX} \circ ( T0_X \circ w, \lambda_X \circ [v,w])
    \bigr)
  \\
  ={}&
  \tau_X \circ +_{TX} \circ \bigl(
    \kappa_{TX} \circ (f, T0_X \circ w), 
    \kappa_{TX} \circ (f, \lambda_X \circ [v,w]) \bigr)
  \\
  ={}&
  \tau_X \circ +_{TX} \circ \bigl(
    T0_X \circ \kappa_X \circ (f,w), \lambda_X \circ \kappa_X \circ (f,[v,w]) \bigr)
  \\  
  ={}&
  \tau_X \circ +_{TX} \circ (T0_X \times_{0_X} \lambda_X) \circ (fw,f[v,w])
  \\
  ={}&
  (\lambda_2)_X \circ (fw,f[v,w])
  \,,
\end{split}
\end{equation}
where we have used that the partial tangent morphism is linear~\eqref{eq:T2linear}, Definition~\eqref{eq:DeltaOrig} of $\delta(v,w)$, Diagram~\eqref{diag:ScalarMult3} and Relation~\eqref{eq:deltaBracketRel}, Definition~\eqref{eq:VertLiftExt} of $(\lambda_2)_X$, linearity of $\kappa_{TX}$ in $T^2X$ (as expressed by Diagram~(v) in Proposition~\ref{prop:RModuleBundle}), the naturality of $\kappa$ and Diagram~\eqref{diag:ScalarMult1}, and, lastly, Definition~\eqref{eq:VertLiftExt} of $(\lambda_2)_X$ again. 

By inserting~\eqref{eq:LeibRuleVecField05} and \eqref{eq:LeibRuleVecField06} in \eqref{eq:LeibRuleVecField04}, we obtain
\begin{equation*}
\begin{split}
  (\lambda_2)_X \circ (fw, [v, fw])
  &= \delta(v, fw)
  \\
  &=
  +_{TX} \circ \bigl(
  (\lambda_2)_X \circ (fw, (v \cdot f)w ),
  (\lambda_2)_X \circ (fw,f[v,w])  
  \bigr)
  \\
  &=
  (\lambda_2)_X \circ \bigl(
   (fw, +_{TX} \circ \bigl( (v \cdot f)w ), f[v,w]\bigr)  
  \bigr)
  \\
  &=
  (\lambda_2)_X \circ \bigl(fw, (v \cdot f)w + f[v,w] \bigr)
  \,,
\end{split}   
\end{equation*}
where we have used that $(\lambda_2)_X$ is linear in the second argument, expressed by the commutativity of Diagram~\eqref{diag:lambda2linear}. Since $(\lambda_2)_X$ is a monomorphism, we can cancel it on both sides of the last equation, which yields Equation~\eqref{eq:LeibnizRule}.
\end{proof}

As is the case for any pullback, the map
\begin{equation*}
\begin{aligned}
  \Hom(*,R) 
  &\longrightarrow \Hom(X,R)
  \\
  (* \to R)
  &\longmapsto
  (X \to * \to R)
\end{aligned}
\end{equation*}
is a homomorphism of rings. Its image consists of the constant $R$-valued functions on $X$. By precomposing the $\Hom(X,R)$-module structure of $\Gamma(X,TX)$ with this ring homomorphism, we equip $\Gamma(X,TX)$ with the structure of a $\Hom(*,R)$-module.

\begin{Corollary}
Let $X$ be an object in a cartesian tangent category with scalar $R$-multiplication. Then the Lie bracket on $\Gamma(X,TX)$ is $\Hom(*,R)$-bilinear.
\end{Corollary}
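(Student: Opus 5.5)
The plan is to read the corollary off the Leibniz rule of Proposition~\ref{prop:LeibnizRule}, specialised to constant functions. Additivity of the bracket in each argument was already noted after \eqref{eq:deltaBracketRel}, so only compatibility with scalars remains.

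Let $c: * \to R$ and write $\bar c: X \to * \xrightarrow{c} R$ for the corresponding constant function. By definition of the $\Hom(*,R)$-module structure, $cw = \bar c\, w$. Proposition~\ref{prop:LeibnizRule} then gives
\begin{equation*}
  [v, cw] = [v, \bar c w] = (v \cdot \bar c)\, w + \bar c\,[v,w] \,.
\end{equation*}

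The key step is to show that $v \cdot \bar c = 0$. By \eqref{eq:VecFieldFunc} we have $v \cdot \bar c = \Val_R \circ T c \circ T\Term_X \circ v$, and $T\Term_X \circ v$ factors through $T* \cong *$. Its composite with $Tc$ is therefore $Tc \circ T\Term_*$, which equals $0_R \circ c$ by naturality of $0$ together with $T* \cong *$ (the zero section of $T*$ being the unique map). The second diagram of \eqref{eq:DiffObj1} gives $\Val_R \circ 0_R = \Rzero \circ \Term_R$, so $v\cdot \bar c = \Rzero \circ \Term_X$, the zero function. Then $(v \cdot \bar c)\, w = \kappa_X \circ (\Rzero \circ \Term_X, w)$, which is the zero section by \eqref{diag:RzeroPreserve}. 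Hence $[v,cw] = c[v,w]$.

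Linearity in the first argument would follow from antisymmetry, $[cv, w] = -[w, cv] = -c[w,v] = c[v,w]$. Antisymmetry of the Rosick\'y bracket is standard (from $\tau$ and \eqref{eq:TanFun5}); if I want to avoid citing it, I would instead derive a first-slot Leibniz rule by the same method as in Proposition~\ref{prop:LeibnizRule}. I expect the only mildly delicate step to be the identification $Tc \circ T\Term_X \circ v = 0_R \circ c \circ \Term_X$, which uses cartesianity ($T* \cong *$).
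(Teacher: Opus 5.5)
Your proof is correct and follows the paper's argument: you specialize the Leibniz rule of Proposition~\ref{prop:LeibnizRule} to a constant function $\bar c = c \circ \Term_X$, use $T* \cong *$ and naturality of $0$ to get $Tc \circ T\Term_X \circ v = 0_R \circ c \circ \Term_X$, and conclude $v \cdot \bar c = \Rzero \circ \Term_X$ from the right diagram of~\eqref{eq:DiffObj1}. Two of your steps are ones the paper leaves implicit, since its proof only writes out $[v,fw] = f[v,w]$: you invoke~\eqref{diag:RzeroPreserve} to show that $(v \cdot \bar c)\,w$ vanishes, and you obtain linearity in the first slot from antisymmetry.
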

\begin{proof}
Let us denote the terminal morphism by $\Term_X:  X \to *$. Since the tangent structure is cartesian, the tangent functor maps the terminal object to the terminal object, so that the bundle projection and the zero section of $T* = *$ are the identity. This implies that the tangent morphism of a point $r: * \to R$ satisfies $Tr = Tr \circ 0_* = 0_R \circ r$. For the action of a vector field $v$ on a constant function, we obtain
\begin{equation*}
\begin{split}
  v \cdot (r\, \circ\, \Term_X)
  &=
  \Val_R \circ Tr \circ T\Term_X \circ v
  \\
  &=
  \Val_R \circ 0_R \circ r\, \circ\, T\Term_X \circ v
  \\
  &=
  \Rzero\, \circ\, \Term_R \circ r \, \circ\, T\Term_X \circ v
  \\
  &=
  \Rzero\, \circ\, \Term_X
  \\
  &= 0
  \,,
\end{split}
\end{equation*}
where we have used Definition~\eqref{eq:VecFieldFunc} of the action and the right diagram of~\eqref{eq:DiffObj1} for $A = R$. It follows from Proposition~\ref{prop:LeibnizRule} that $[v,fw] = f[v,w]$ for all constant functions $f: X \to * \to R$.
\end{proof}

\section{Cartan calculus}
\label{sec:Cartan}

\subsection{Abstract Lie algebroids and Lie-Rinehart algebras}

\begin{Definition}[Definition~7.1 in \cite{AintablianBlohmann:2025}]
\label{def:AbstractLieAlgd}
Let $\calC$ be a cartesian tangent category with scalar $R$-mul\-ti\-pli\-ca\-tion. An \textbf{abstract Lie algebroid} in $\calC$ consists of a bundle of $R$-modules $A \to X$, a morphism $\rho: A \to TX$ of bundles of $R$-modules, called the \textbf{anchor}, and a Lie bracket on the abelian group $\Gamma(X,A)$, such that
\begin{align}
\label{eq:LieAlgdLeibniz}
  [a, fb] &= f[a,b] + \bigl((\rho \circ a) \cdot f \bigr) b
  \\
\label{eq:anchorLieAlgMap}
  \rho \circ [a,b] &= [\rho \circ a , \rho \circ b]
\end{align}
for all sections $a$, $b$ of $A \to X$ and all morphisms $f: X \to R$ in $\catC$.
\end{Definition}

Lie algebroids are closely related to Lie-Rinehart algebras. This remains true for abstract Lie algebroids. We first recall the notion of Lie-Rinehart algebras introduced in \cite{Rinehart:1963}.

\begin{Definition}
\label{def:LieRinehartAlg}
A \textbf{Lie-Rinehart algebra} over a commutative ring $\calO$ is an $\calO$-module $\calX$ equipped with a Lie bracket and an action of $\calX$ on $\calO$ by derivations, satisfying
\begin{equation}
\label{eq:LieRineLeibniz}
  [v, fw] = (v \cdot f)\, w + f[v,w]
\end{equation}
for all $v, w \in \calX$ and $f \in \calO$.
\end{Definition}

\begin{Remark}
Definition~\ref{def:LieRinehartAlg} generalizes to a commutative algebra $\calO$ over a commutative ring $k$ (typically a field) \cite{Huebschmann:1998}. In that case we have to assume that the Lie bracket is $k$-linear.
\end{Remark}

\begin{Proposition}
\label{prop:LieAlgdRinehart}
Let $A \to X$ be an abstract Lie algebroid in a cartesian tangent category with scalar $R$-multiplication. Let $\calO \coloneqq \Hom(X,R)$ and $\calA \coloneqq \Gamma(X,A)$. Then the $\calO$-module $\calA$ with the action $a \cdot f \coloneqq (\rho \circ a) \cdot f$ on $\calO$ is a Lie-Rinehart algebra.
\end{Proposition}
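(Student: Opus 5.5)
The plan is to check the axioms of Definition~\ref{def:LieRinehartAlg} one at a time, transporting each from the tangent Lie algebroid via the anchor. First, $\calA$ is an $\calO$-module: $A \to X$ is a bundle of $R$-modules, so by \eqref{eq:SectionsModuleStr} we have $fa = \kappa^A \circ (f,a)$. Second, $\calA$ carries a Lie bracket by Definition~\ref{def:AbstractLieAlgd}. Third, the Leibniz rule \eqref{eq:LieRineLeibniz} with the proposed action $a \cdot f = (\rho \circ a)\cdot f$ is literally \eqref{eq:LieAlgdLeibniz}, after using commutativity of the sum in the abelian group $\calA$. What remains is to show that $a \mapsto (\rho\circ a)\cdot \Empty$ is an action of $\calA$ on $\calO$ by derivations. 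I read this as requiring three things: each $a$ acts as a derivation of $\calO$; the map is additive (I also expect $\calO$-linearity to be implicit); and it is a Lie algebra homomorphism into $\Der(\calO)$ with the commutator bracket.

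For the derivation property and additivity in $f$, I will apply \eqref{eq:VecActFuncA} and \eqref{eq:VecActFunc2} of Proposition~\ref{prop:VecActFunc} to the vector field $v = \rho\circ a$. Since $\rho$ is a morphism of bundles over $X$, $\pi_X\circ\rho\circ a = \id_X$, so $\rho \circ a$ is indeed a vector field.

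Additivity in $a$ needs two steps. Because $\rho$ is a morphism of bundles of abelian groups, we get $\rho\circ(a+b) = \rho\circ a + \rho\circ b$, via \eqref{eq:MorphBund3} applied to $(a,b)$. Then \eqref{eq:VecActFuncB} gives additivity of the action.

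$\calO$-linearity in $a$ works the same way. Diagram \eqref{eq:MorphBundR2} gives $\rho\circ(fa) = \kappa_X\circ(f,\rho\circ a) = f(\rho\circ a)$. It then remains to show $(gv)\cdot f = g\,(v\cdot f)$ for a vector field $v$. Writing out the definition, this reads $\Val_R\circ Tf\circ\kappa_X\circ(g,v)$. Naturality of $\kappa$ turns this into $\Val_R\circ\kappa_R\circ(g,Tf\circ v)$. Since $R$ is tangent-stable as an $R$-module, $\Val_R\circ\kappa_R = \Rmult\circ(\id_R\times\Val_R)$, which is the first square in the proof of Proposition~\ref{prop:RmodLinearization}. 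This yields $g\,(v\cdot f)$.

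Finally, the homomorphism property comes from \eqref{eq:anchorLieAlgMap} together with \eqref{eq:VecActFunc1}:
\begin{equation*}
  [a,b]\cdot f = [\rho\circ a,\rho\circ b]\cdot f = a\cdot(b\cdot f) - b\cdot(a\cdot f).
\end{equation*}

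I expect the main obstacle to be the $\calO$-linearity step. It is the one place that needs an identity not already packaged in Proposition~\ref{prop:VecActFunc}, namely the compatibility of $\Val_R$ with the fiberwise scalar multiplication $\kappa_R$. I would justify it from the statement that the trivialization $TR\cong R\times R$ of Proposition~\ref{prop:TGtrivial} is an isomorphism of bundles of $R$-modules, so that its fiber projection $\Val_R$ intertwines $\kappa_R$ with $\Rmult$. Everything else is formal bookkeeping.
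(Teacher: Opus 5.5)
Your proposal is correct and matches the argument the paper leaves implicit (Proposition~\ref{prop:LieAlgdRinehart} is stated there without proof): the Lie-Rinehart axioms are transported along the anchor from Proposition~\ref{prop:VecActFunc}, with \eqref{eq:LieAlgdLeibniz} giving the Leibniz rule and \eqref{eq:anchorLieAlgMap} together with \eqref{eq:VecActFunc1} giving the homomorphism property. Your extra check that $(gv)\cdot f = g\,(v\cdot f)$, via naturality of $\kappa$ and $\Val_R \circ \kappa_R = \Rmult \circ (\id_R \times \Val_R)$, is not covered by Proposition~\ref{prop:VecActFunc} and is worth keeping, because this $\calO$-linearity of the action is what makes the Chevalley--Eilenberg differential $\calO$-multilinear in Proposition~\ref{prop:ACartanCalc}.
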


\subsection{The Cartan calculus of Lie-Rinehart forms}

Let $\Omega$ be a graded commutative algebra. We denote the product of $\alpha$ and $\beta$ by $\alpha \wedge \beta$. The degree-zero component $\Omega^0$ is a commutative algebra that lies in the center of $\Omega$, so $\Omega$ is a graded $\Omega^0$-module. A graded derivation of $\Omega$ of degree $p$ is a linear map $\delta: \Omega \to \Omega[p]$ that satisfies
\begin{equation*}
  \delta(\alpha \wedge \beta)
  = \delta\alpha \wedge \beta + (-1)^p \alpha \wedge \delta\beta
  \,.
\end{equation*}
If $\epsilon$ is a graded derivation of degree $q$, then the graded commutator
\begin{equation*}
  [\delta, \epsilon] 
  = \delta\epsilon - (-1)^{pq} \epsilon\delta
\end{equation*}
is a graded derivation of degree $p+q$.

\begin{Definition}
\label{def:CartanCalc}
A \textbf{Cartan calculus} consists of a differential graded commutative algebra $(\Omega, d)$, an $\Omega^0$-module $\calX$ equipped with a Lie bracket, and for every $v \in \calX$ a graded derivation $\iota_v$ of degree $-1$, such that
\begin{equation}
\label{eq:InnDerOlinear}
  \iota_{v+w} = \iota_v + \iota_w
  \,,\quad
  \iota_{fv}\omega = f \wedge \iota_v\omega
\end{equation}
and, setting $\Lie_v \coloneqq [\iota_v, d]$, the bracket relations
\begin{equation}
\label{eq:CartanCalcBrackets}
\begin{gathered}{}
  [\iota_v, \iota_w] = 0 \,,\quad
  [\Lie_v, \iota_w] = \iota_{[v,w]} \,,\quad
  [\Lie_v, \Lie_w] = \Lie_{[v,w]} \,,\quad
  [\Lie_v, d] = 0
\end{gathered}
\end{equation}
hold for all $v, w \in \calX$, $f \in \Omega^0$, and $\omega \in \Omega$.
\end{Definition}

\begin{Remark}
The differential $d$ satisfies $[d,d] = 2d^2 = 0$. Conversely, if $\Omega$ has no 2-torsion, then the relation $[d,d] = 0$ implies $d^2 = 0$.
\end{Remark}

The derivation $\iota_v$ is called the \textbf{inner derivative} and $\Lie_v$ the \textbf{Lie derivative} with respect to $v$. If $\Omega$ has no 2-torsion, the bracket relations~\eqref{eq:CartanCalcBrackets} of Definition~\ref{def:CartanCalc} are redundant. If we only assume that $[\iota_v, \iota_w] = 0$ and $[\Lie_v, \iota_w] = \iota_{[v,w]}$, it follows that
\begin{equation*}
\begin{split}
  [\Lie_v, d]
  &=
  [[\iota_v, d], d]
  =
  [\iota_v, [d, d]] - [[\iota_v, d], d]
  \\
  &=
  - [\Lie_v, d]
  \,,
\end{split}    
\end{equation*}
where we have used the graded Jacobi identity. Since $\Omega$ is assumed to have no 2-torsion, it follows that $[\Lie_v,d] = 0$. Using this relation, we obtain
\begin{equation*}
\begin{split}
  [\Lie_v, \Lie_w]
  &=
  [\Lie_v, [\iota_w, d] ]
  =
  [[\Lie_v, \iota_w], d] +   [\iota_w, [\Lie_v, d] ]
  =
  [\iota_{[v,w]}, d] 
  \\
  &=
  \Lie_{[v,w]} 
  \,.
\end{split}
\end{equation*}
We see that the first two bracket relations imply the remaining two.

To every Lie-Rinehart algebra $\calX$ over $\calO$, we can associate a Cartan calculus as follows. A \textbf{Lie-Rinehart $n$-form} is a map 
\begin{equation*}
  \omega: \underbrace{
  \calX \times \ldots \times \calX}_{\text{$n$ factors}}
  \longrightarrow \calO
\end{equation*}
that is antisymmetric and $\calO$-linear in every argument. A $0$-form is an element of $\calO$. By the universal property of the algebraic wedge product over $\calO$, we can identify $\omega$ with an $\calO$-linear map
\begin{equation*}
  \omega: \wedge^n_\calO \calX 
  \longrightarrow \calO
  \,.
\end{equation*}
The graded $\calO$-module of forms
\begin{equation}
  \Omega^\bullet(\calX) 
  \coloneqq \Hom_\calO(\wedge^\bullet_\calO \calX, \calO)
\end{equation}
has the structure of a graded commutative $\calO$-algebra
\begin{align*}
   \Omega^p(\calX) \times \Omega^q(\calX)
   &\longrightarrow \Omega^{p+q}(\calX)
   \\
   (\alpha,\beta)
   &\longmapsto
   \alpha \wedge \beta
\end{align*}
defined by the antisymmetrization of the pointwise multiplication of forms,
\begin{equation*}
  (\alpha \wedge \beta)(v_1, \ldots, v_{p+q})
  \coloneqq \tfrac{1}{(p+q)!}\sum_{\mathclap{\sigma \in S_{p+q}}}
    (-1)^{|\sigma|}
    \alpha(v_{\sigma(1)}, \ldots, v_{\sigma(p)}) \,
    \beta(v_{\sigma(p+1)}, \ldots, v_{\sigma(p+q)})
    \,.
\end{equation*}
The \textbf{differential} of $\omega$ is the $(n+1)$-form 
\begin{equation}
\label{eq:differential}
\begin{split}
  (d\omega)(v_0, \ldots, v_n)
  \coloneqq &\sum_{\mathclap{0 \leq i \leq n}} (-1)^i 
    v_i \cdot \omega(v_0, \ldots, \hat{v}_i, \ldots, v_n)
  \\
    + &\sum_{\mathclap{0 \leq i < j \leq n}}
    (-1)^{i+j} \omega([v_i, v_j], \ldots, \hat{v}_i, \ldots, \hat{v}_j, \ldots, v_n)
  \,,    
\end{split}
\end{equation}
given by the usual Chevalley-Eilenberg formula. Standard computations show that $d\omega$ is antisymmetric and $\calO$-linear in every argument and that $d^2 = 0$. By a standard calculation, we can check that the differential is a graded derivation,
\begin{equation*}
  d(\alpha \wedge \beta)
  = d\alpha \wedge \beta + (-1)^p \alpha \wedge d\beta
  \,.
\end{equation*}
We conclude that $\Omega(\calX)$ is a differential graded algebra. The \textbf{inner derivative} is the operator that inserts an element $v \in \calX$ into a form:
\begin{equation}
\label{eq:InnDeriv}
\begin{aligned}
   \iota_v: \Omega^n(\calX)
   &\longrightarrow \Omega^{n-1}(\calX)
   \\
   (\iota_v \omega)(v_1, \ldots, v_{n-1})
   &\coloneqq
   \omega(v, v_1, \ldots, v_{n-1})
   \,.
\end{aligned}
\end{equation}
Since $\omega$ is by definition $\calO$-linear, so is the inner derivative. That is, Equations~\eqref{eq:InnDerOlinear} hold. It follows from the antisymmetry of $\omega$ that
  \begin{equation*}
  \iota_v \iota_w \omega = -\iota_w \iota_v \omega
\end{equation*}
for all $v, w \in \calX$. Moreover, it is straightforward to show that $\iota_v$ is a graded derivation. The main result is the following:

\begin{Proposition}
\label{prop:ACartanCalc}
Let $\calX$ be a Lie-Rinehart algebra over $\calO$. The differential graded commutative algebra $(\Omega(\calX), d)$, the Lie algebra $\calX$, and the inner derivative~\eqref{eq:InnDeriv} constitute a Cartan calculus.
\end{Proposition}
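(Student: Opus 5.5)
The plan is to verify the Cartan relations directly from the Chevalley--Eilenberg formula~\eqref{eq:differential}, using only the Lie--Rinehart axioms of Definition~\ref{def:LieRinehartAlg}. The preceding discussion already states the structural facts: $(\Omega(\calX),d)$ is a differential graded commutative algebra, $d$ and each $\iota_v$ are graded derivations, $\iota_v\iota_w=-\iota_w\iota_v$, and $\iota$ satisfies~\eqref{eq:InnDerOlinear}. The remaining task is the four bracket relations~\eqref{eq:CartanCalcBrackets}.

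The first relation, $[\iota_v,\iota_w]=\iota_v\iota_w+\iota_w\iota_v=0$, is exactly the antisymmetry just noted. I would not appeal to the redundancy argument, because it needs $\Omega$ to be free of 2-torsion, and $\calO$ is arbitrary. I will therefore prove the remaining three relations directly.

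\textbf{Explicit Lie derivative.} I first compute $\Lie_v=\iota_v d+d\iota_v$ on an $n$-form. Expanding both terms with~\eqref{eq:differential}, the terms involving $v_0=v$ in $d\omega(v,v_1,\dots,v_n)$ combine with $d(\iota_v\omega)(v_1,\dots,v_n)$. The terms that do not act on $v$ cancel pairwise with opposite signs. What remains is the standard formula
\begin{equation*}
  (\Lie_v\omega)(v_1,\dots,v_n)
  = v\cdot\omega(v_1,\dots,v_n)
  - \sum_{i}\omega(v_1,\dots,[v,v_i],\dots,v_n)\,.
\end{equation*}
This is pure sign bookkeeping.

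\textbf{The bracket relations.} From this formula, $[\Lie_v,\iota_w]=\iota_{[v,w]}$ is immediate: evaluate $\Lie_v\iota_w\omega-\iota_w\Lie_v\omega$ on $(v_1,\dots,v_{n-1})$. All terms cancel except $\omega([v,w],v_1,\dots)$. For $[\Lie_v,\Lie_w]=\Lie_{[v,w]}$, evaluate on arguments. The function terms give $v\cdot(w\cdot h)-w\cdot(v\cdot h)=[v,w]\cdot h$, because the action is a Lie algebra representation. The bracket terms reduce through the Jacobi identity of $\calX$, and the mixed terms cancel. For $[\Lie_v,d]=0$, I would prove it by induction on form degree via the derivation property. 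Alternatively, and more robustly, note that $\Lie_v d-d\Lie_v=\iota_v d^2-d^2\iota_v+(d\iota_v d-d\iota_v d)=0$ directly from $d^2=0$. This identity holds without any torsion hypothesis, so I will use it.

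\textbf{Main obstacle.} The genuinely delicate step is the one the paper calls standard: checking that $d\omega$ and $\Lie_v\omega$ are again $\calO$-multilinear, and that $d^2=0$. This is where the Leibniz rule~\eqref{eq:LieRineLeibniz} and the derivation property of the action enter. Replacing $v_j$ by $fv_j$ in~\eqref{eq:differential} produces extra terms $(v_i\cdot f)\,\omega(\dots)$ from the anchor part. These cancel against the $(v_i\cdot f)v_j$ terms coming from $[v_i,fv_j]$.

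A second concern is the factor $1/(p+q)!$ in the wedge product, which needs care over general $\calO$. I would sidestep this by reading the wedge as the shuffle-sum formula, which agrees with the stated one whenever the factorials are invertible. With multilinearity established, every computation above may be done on arbitrary arguments without further subtlety.
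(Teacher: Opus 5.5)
Your argument is correct in substance but organized differently from the paper. The paper gives no separate proof of this proposition. It relies on the surrounding remarks: standard computations for $d$ and $\iota_v$, antisymmetry for $[\iota_v,\iota_w]=0$, and the redundancy argument after Definition~\ref{def:CartanCalc}. That argument derives $[\Lie_v,d]=0$ and $[\Lie_v,\Lie_w]=\Lie_{[v,w]}$ from the first two bracket relations, assuming $\Omega$ has no 2-torsion. The assumption is implicitly met there, since the $\frac{1}{(p+q)!}$ in the paper's wedge product presupposes invertible factorials. The relation $[\Lie_v,\iota_w]=\iota_{[v,w]}$ itself is left as standard.

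Your route supplies that relation via the explicit formula for $\Lie_v$. It also obtains $[\Lie_v,d]=\iota_v d^2-d^2\iota_v=0$ with no torsion hypothesis, so it works over an arbitrary commutative ring $\calO$. You can shorten it further. The second half of the paper's redundancy computation,
$[\Lie_v,\Lie_w]=[\Lie_v,[\iota_w,d]]=[[\Lie_v,\iota_w],d]+[\iota_w,[\Lie_v,d]]=[\iota_{[v,w]},d]=\Lie_{[v,w]}$,
uses only the graded Jacobi identity. So once you have $[\Lie_v,\iota_w]$ and $[\Lie_v,d]$, the direct evaluation of $[\Lie_v,\Lie_w]$ is unnecessary.

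Two points need correcting.

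\textbf{The wedge normalization.} The shuffle product does \emph{not} agree with the stated one when factorials are invertible. The shuffle sum is $\frac{1}{p!\,q!}\sum_{\sigma\in S_{p+q}}$, which differs from the paper's $\frac{1}{(p+q)!}\sum_{\sigma}$ by the factor $\binom{p+q}{p}$. This matters: with the paper's normalization, neither $d$ nor $\iota_v$ is a derivation. For instance, $d(f\,dg)(v,w)=(v\cdot f)(w\cdot g)-(w\cdot f)(v\cdot g)=2\,(df\wedge dg)(v,w)$, whereas the derivation rule demands $d(f\,dg)=df\wedge dg$. Likewise, for 1-forms $\iota_v(\alpha\wedge\beta)=\tfrac12(\iota_v\alpha\wedge\beta-\alpha\wedge\iota_v\beta)$. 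So your switch to the shuffle product is a necessary correction of the stated convention, not an equivalent reading, and you should present it that way. (Alternatively, when factorials are invertible, keep the paper's wedge and rescale $d$ and $\iota_v$.)

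\textbf{The $\calO$-multilinearity check.} You only treat the terms with $i\neq j$. The term $i=j$ is $(fv_j)\cdot\omega(\dots,\hat v_j,\dots)$. Matching it with $f\,\bigl(v_j\cdot\omega(\dots,\hat v_j,\dots)\bigr)$ requires $(fv)\cdot h=f\,(v\cdot h)$, i.e.\ $\calO$-linearity of the action in $v$. This is neither the Leibniz rule nor the derivation property. It is part of the standard definition of a Lie--Rinehart algebra, though not spelled out in Definition~\ref{def:LieRinehartAlg}. Without it, already $dh$ need not be $\calO$-linear, so state it explicitly as an axiom you use.

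Two smaller remarks. $d^2=0$ rests on the Jacobi identity and the representation property, not the Leibniz rule. And $\Lie_v\omega\in\Omega(\calX)$ needs no separate check once $d$ and $\iota_v$ preserve $\Omega(\calX)$.
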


\begin{Example}
Let $M$ be a manifold. The Lie-Rinehart algebra of the tangent algebroid $TM \to M$ is given by the $\bbR$-algebra $\calO = C^\infty(M)$ of smooth functions, the Lie algebra $\calX \coloneqq \calX(M)$ of vector fields with the Lie bracket, and the action of vector fields as derivations on $C^\infty(M)$. From Proposition~\ref{prop:ACartanCalc} we retrieve the usual Cartan calculus given by the graded algebra of differential forms with the de Rham differential, inner derivative, and Lie derivative.
\end{Example}

\begin{Proposition}
\label{prop:CartanCalcLieRine}
Let $(\Omega, \calX)$ be a Cartan calculus such that the map $\iota: \calX \to \Der(\Omega)$ is injective. Then $\calX$ is a Lie-Rinehart algebra over $\calO = \Omega^0$ with action $v \cdot f = \iota_v df$ of $v \in \calX$ on $f \in \calO$.
\end{Proposition}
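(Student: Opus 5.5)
We have to show that $v\cdot f \coloneqq \iota_v df$ gives an action of $\calX$ on $\calO = \Omega^0$ by derivations, that it is additive and $\calO$-linear in $v$, and that the Leibniz rule $[v,fw] = (v\cdot f)w + f[v,w]$ holds. The idea is to derive each identity as an identity between derivations of $\Omega$ and then pull it back to $\calX$ using injectivity of $\iota$.

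First the derivation properties. Since $d$ is a derivation of degree $1$ with $d(fg) = df\wedge g + f\wedge dg$, and $\iota_v$ is $\Omega^0$-linear on $\Omega^1$ in the sense that $\iota_v(f\wedge\alpha) = f\wedge \iota_v\alpha$ (the derivation rule for $\iota_v$ with $f$ of degree $0$ and $\iota_v f = 0$ for degree reasons), we get $v\cdot(fg) = (v\cdot f)g + f(v\cdot g)$. Additivity in $f$ is clear. Additivity and $\calO$-linearity in $v$ follow from \eqref{eq:InnDerOlinear}: $(hv)\cdot f = h\,(v\cdot f)$. Note also that on $\Omega^0$ we have $\Lie_v f = \iota_v df + d\iota_v f = \iota_v df = v\cdot f$. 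Compatibility with the bracket then follows from $[\Lie_v,\Lie_w] = \Lie_{[v,w]}$ applied to $f\in\Omega^0$: $[v,w]\cdot f = v\cdot(w\cdot f) - w\cdot(v\cdot f)$.

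The main step is the Leibniz rule. Injectivity of $\iota$ reduces it to proving
\begin{equation*}
  \iota_{[v,fw]} = \iota_{(v\cdot f)w + f[v,w]} = (v\cdot f)\,\iota_w + f\,\iota_{[v,w]},
\end{equation*}
using \eqref{eq:InnDerOlinear} for the second equality. By the Cartan relation, $\iota_{[v,fw]} = [\Lie_v,\iota_{fw}] = [\Lie_v, f\iota_w]$, where $f\iota_w$ denotes left multiplication by $f$ composed with $\iota_w$. Since $\Lie_v$ is a degree-$0$ derivation, for every $\omega$ we have
\begin{equation*}
  \Lie_v(f\wedge\iota_w\omega) = (\Lie_v f)\wedge\iota_w\omega + f\wedge\Lie_v\iota_w\omega,
\end{equation*}
so that $[\Lie_v, f\iota_w] = (\Lie_v f)\iota_w + f[\Lie_v,\iota_w] = (v\cdot f)\iota_w + f\iota_{[v,w]}$. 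Here we use $\Lie_v f = v\cdot f$ from above. This gives the required identity, and injectivity of $\iota$ yields the Leibniz rule.

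The potential obstacle is bookkeeping. One has to justify that $\iota_{fv}$, as a derivation, equals the operator $\omega\mapsto f\wedge\iota_v\omega$, which is exactly \eqref{eq:InnDerOlinear}. One also has to check that the Lie bracket on $\calX$ is additive, so that $\calX$ is a Lie algebra in the required sense; this is part of the hypothesis that $\calX$ carries a Lie bracket. Injectivity of $\iota$ is used only for the Leibniz rule. The action by derivations and the homomorphism property hold without it.
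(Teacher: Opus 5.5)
Your proof is correct and follows the paper's argument. As in the paper, you obtain the Leibniz rule by expanding $\iota_{[v,fw]} = [\Lie_v,\iota_{fw}]$ with $\iota_{fw}\omega = f\wedge\iota_w\omega$ and the derivation property of $\Lie_v$, then cancelling $\iota$ by injectivity. The only difference is cosmetic: you check that $f\mapsto \iota_v df$ is a derivation directly from $d(fg)$ and the $\Omega^0$-linearity of $\iota_v$, whereas the paper restricts the degree-zero derivation $\Lie_v$ to $\Omega^0$.
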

\begin{proof}
Since $[\Lie_v, \Lie_w] = \Lie_{[v,w]}$, the Lie derivative defines an action of $\calX$ on $\Omega$ by derivations. Since $\Lie_v$ has degree 0, the action restricts to $\calO = \Omega^0$. With $\Lie_v = \iota d + d \iota_v$, we can write the action as $v \cdot f = \Lie_v f = \iota_v df$. From Equation~\eqref{eq:InnDerOlinear} we get
\begin{equation*}
\begin{split}
  \iota_{[v, fw]} \omega
  &= [\Lie_v, \iota_{fw}]\omega
  \\
  &= \Lie_v(\iota_{fw} \omega) - \iota_{fw}(\Lie_v \omega)
  \\
  &= \Lie_v(f \wedge \iota_w \omega) - f \wedge \iota_w(\Lie_v \omega)
  \\
  &= (\Lie_v f) \wedge \iota_w \omega
    + f \wedge \Lie_v(\iota_w \omega)
    - f \wedge \iota_w(\Lie_v \omega)
  \\
  &= (v \cdot f) \wedge \iota_w \omega + f \wedge [\Lie_v, \iota_w]\omega
  \\
  &= \iota_{(v \cdot f)w} \omega + f \wedge \iota_{[v,w]} \omega
  \\
  &= \iota_{(v \cdot f)w + f[v,w]} \omega
  \,.
\end{split}
\end{equation*}
Since $v \mapsto \iota_v$ is injective, it follows that the Leibniz rule~\eqref{eq:LieRineLeibniz} holds. We conclude that $\calX$ is a Lie-Rinehart algebra over $\calO$.    
\end{proof}


\begin{Remark}
We posit that the constructions of Propositions~\ref{prop:ACartanCalc} and~\ref{prop:CartanCalcLieRine} define an equivalence between the category of Lie-Rinehart algebras and the category of Cartan calculi with injective inner-derivative map.
\end{Remark}

\subsection{Cartan calculus in tangent categories}

\begin{Proposition}
\label{prop:CartanOnA}
Let $A \to X$ be an abstract Lie algebroid in a cartesian tangent category with scalar $R$-multiplication. Let $\calO \coloneqq \Hom(X,R)$. Then the Lie algebra of sections $\calA \coloneqq \Gamma(X,A)$ and the graded commutative $\calO$-algebra of forms,
\begin{equation*}
  \Omega^\bullet(\calA) 
  \coloneqq 
  \Hom_{\calO}\bigl(\wedge^\bullet_{\calO} \calA, 
    \calO \bigr)
  \,,
\end{equation*}
together with the Chevalley-Eilenberg differential~\eqref{eq:differential} and the inner derivative~\eqref{eq:InnDeriv} constitute a Cartan calculus in the sense of Definition~\ref{def:CartanCalc}.
\end{Proposition}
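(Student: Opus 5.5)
The plan is to reduce Proposition~\ref{prop:CartanOnA} to the two algebraic results already stated. First I apply Proposition~\ref{prop:LieAlgdRinehart} to the abstract Lie algebroid $A \to X$. This shows that $\calA = \Gamma(X,A)$, with the $\calO$-module structure~\eqref{eq:SectionsModuleStr} induced by the $R$-module structure of $A \to X$ and the action $a \cdot f \coloneqq (\rho \circ a)\cdot f$, is a Lie-Rinehart algebra over $\calO = \Hom(X,R)$. Then I apply Proposition~\ref{prop:ACartanCalc} to this Lie-Rinehart algebra. It yields directly that $(\Omega^\bullet(\calA), d)$ with the Chevalley-Eilenberg differential~\eqref{eq:differential}, the Lie algebra $\calA$, and the inner derivative~\eqref{eq:InnDeriv} form a Cartan calculus, which is exactly the claim.

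Both cited propositions appear earlier without proof in the text shown, so I would at least indicate why each holds. For Proposition~\ref{prop:LieAlgdRinehart}, the checks are as follows.
\begin{itemize}
\item The action $a \cdot f = (\rho\circ a)\cdot f$ is additive in $a$ and in $f$. This follows from~\eqref{eq:VecActFuncA}, \eqref{eq:VecActFuncB}, and additivity of $\rho_*$, since $\rho$ is a morphism of bundles of $R$-modules over $\id_X$.
\item It acts by derivations, by~\eqref{eq:VecActFunc2}.
\item It is a Lie algebra homomorphism into $\Der(\calO)$, by the anchor condition~\eqref{eq:anchorLieAlgMap} combined with~\eqref{eq:VecActFunc1}.
\item The Leibniz rule~\eqref{eq:LieRineLeibniz} is the axiom~\eqref{eq:LieAlgdLeibniz}.
\end{itemize}
One subtlety: Definition~\ref{def:LieRinehartAlg} also asks the action to be compatible with the module structure, $(fa)\cdot g = f(a\cdot g)$. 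To get this, I use that $\rho$ is $R$-linear, so $\rho\circ(fa) = f(\rho\circ a)$. I then need $(fv)\cdot g = f\,(v\cdot g)$ for vector fields $v$. This follows from the naturality of $\kappa$ applied to $g: X\to R$, which gives $Tg\circ\kappa_X = \kappa_R\circ(\id_R\times Tg)$. Combined with the fact that $\Val_R$ intertwines $\kappa_R$ with the multiplication $\Rmult$, this holds because $\Val_R$ is a morphism of $R$-modules by tangent-stability.

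For Proposition~\ref{prop:ACartanCalc}, I would verify the Cartan relations~\eqref{eq:CartanCalcBrackets} by the standard algebraic argument. The relation $[\iota_v,\iota_w]=0$ follows from antisymmetry of forms. For $[\Lie_v,\iota_w] = \iota_{[v,w]}$, I first establish the explicit formula for $\Lie_v = \iota_v d + d\iota_v$ as
\begin{equation*}
  (\Lie_v\omega)(v_1,\dots,v_n) = v\cdot\omega(v_1,\dots,v_n) - \sum_i \omega(v_1,\dots,[v,v_i],\dots,v_n)
  \,,
\end{equation*}
by expanding the Chevalley-Eilenberg formula. The relation then follows by direct evaluation. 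I would not use the no-2-torsion shortcut, since $\calO$ may have 2-torsion. Instead I verify $[\Lie_v,d]=0$ via $d^2=0$: $[\Lie_v, d] = [[\iota_v,d],d] = \iota_v d^2 - d^2\iota_v = 0$. I then get $[\Lie_v,\Lie_w]=\Lie_{[v,w]}$ from the graded Jacobi computation shown before Proposition~\ref{prop:CartanCalcLieRine}.

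I expect the main obstacle to be the standard verifications that $d\omega$ is again $\calO$-multilinear and that $d^2=0$. These are exactly where the Leibniz rule and the fact that the action is a Lie algebra homomorphism by derivations are used. The $\calO$-linearity check expands $d\omega(\dots,fv_k,\dots)$ using the derivation property in the first sum and~\eqref{eq:LieRineLeibniz} in the second, and the extra terms cancel pairwise. I would do this once carefully. For $d^2=0$ I would cite the classical Chevalley-Eilenberg argument, which is purely algebraic and uses only the Jacobi identity and that $\calA$ acts as a representation on $\calO$.
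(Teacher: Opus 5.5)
Your proposal is correct and coincides with the paper's proof, which consists solely of applying Proposition~\ref{prop:LieAlgdRinehart} to the abstract Lie algebroid and then Proposition~\ref{prop:ACartanCalc} to the resulting Lie-Rinehart algebra. Your supplementary checks are sound and in two places sharper than the text: the $\calO$-linearity $(fa)\cdot g = f\,(a\cdot g)$ of the anchor, which Definition~\ref{def:LieRinehartAlg} does not actually state, is genuinely needed for $d\omega$ to be $\calO$-multilinear, and $[\Lie_v,d] = \iota_v d^2 - d^2\iota_v = 0$ requires no assumption on 2-torsion; if you also write out that $\iota_v$ and $d$ are graded derivations, use the shuffle normalization $\tfrac{1}{p!\,q!}$ for $\wedge$, since with the paper's factor $\tfrac{1}{(p+q)!}$ these identities are off by combinatorial factors.
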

\begin{proof}
The statement follows from first applying Proposition~\ref{prop:LieAlgdRinehart} to the abstract Lie algebroid and then Proposition~\ref{prop:ACartanCalc} to the resulting Lie-Rinehart algebra.
\end{proof}

\begin{Theorem}
\label{thm:TangentLieAlgd}
Let $X$ be an object in a cartesian tangent category with scalar $R$-multiplication. Then the tangent bundle $TX \to X$ with the anchor $\id: TX \to TX$ and the Lie bracket of vector fields is an abstract Lie algebroid, called the \textbf{tangent Lie algebroid} of $X$. 
\end{Theorem}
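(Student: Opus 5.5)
The plan is to verify the items of Definition~\ref{def:AbstractLieAlgd} one at a time for $A = TX$ and $\rho = \id_{TX}$. Almost all of the work has already been done in Section~\ref{sec:VecActFunc}, so the proof is largely a matter of assembling those results.

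First, the structural data. By the standing assumption of an $R$-module structure (Proposition~\ref{prop:RModuleBundle}), $\pi_X: TX \to X$ is a bundle of $R$-modules. The identity $\id_{TX}$ is trivially a morphism of bundles of $R$-modules over $\id_X$: the squares \eqref{eq:MorphBund3} and \eqref{eq:MorphBundR2} commute with identities on all sides. Next, the bracket on $\Gamma(X,TX)$ is the Lie bracket of vector fields defined through \eqref{eq:deltaBracketRel}. I would note that it is biadditive, as remarked after its definition. Antisymmetry should follow from the symmetry $\tau\circ\tau = \id$ together with $\tau\circ\lambda = \lambda$ (the left diagram of \eqref{eq:TanFun5}), which swaps the two entries of $\delta(v,w)$ up to sign. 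The Jacobi identity is the theorem of \cite{CockettCruttwell:2015} (announced in \cite{Rosicky:1984}), which applies because, by Proposition~\ref{prop:ScalarImpliesNegatives}, we are in a Rosick\'y tangent category. So $\Gamma(X,TX)$ is a Lie algebra over $\bbZ$, i.e.\ a Lie bracket on the abelian group of sections, as the definition requires.

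Second, the two axioms. The anchor condition \eqref{eq:anchorLieAlgMap} reads $\id\circ[a,b] = [\id\circ a, \id\circ b]$, which is a tautology. The Leibniz condition \eqref{eq:LieAlgdLeibniz} with $\rho = \id$ becomes $[v,fw] = f[v,w] + (v\cdot f)w$, where $v\cdot f$ is the action \eqref{eq:VecFieldFunc}. Since the module structure on sections is \eqref{eq:fvDef}, this is exactly Proposition~\ref{prop:LeibnizRule}, using commutativity of addition in $\Gamma(X,TX)$. If one also wants the action to be by derivations compatible with the bracket, Proposition~\ref{prop:VecActFunc} supplies this, although Definition~\ref{def:AbstractLieAlgd} does not demand it separately.

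The only step that is not immediate is the Lie algebra structure on $\Gamma(X,TX)$, and in particular the Jacobi identity, which I would cite rather than reprove. If antisymmetry is not explicitly covered by the cited reference, I would check it directly as follows. Apply $\tau_X$ to $\delta(v,w)$, and use that $\tau$ is additive (the right diagram of \eqref{eq:TanFun2}) to get $\tau_X\circ\delta(v,w) = -\delta(w,v)$ up to the appropriate fiber identification. Then compare the two expressions through $\lambda_2$ and the pullback \eqref{diag:lambda2pullback}, and conclude $[w,v] = -[v,w]$ using that $(\lambda_2)_X$ is a monomorphism.
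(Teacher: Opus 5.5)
Your proposal is correct and is essentially the paper's argument: the paper simply cites Propositions~\ref{prop:VecActFunc} and~\ref{prop:LeibnizRule}, with the anchor condition trivial for $\rho=\id$ and the Lie algebra structure on $\Gamma(X,TX)$ taken from the literature \cite{CockettCruttwell:2015}. The one loose spot is your optional direct antisymmetry check, which does not work as sketched: $\tau_X\circ\delta(v,w)$ lies over $(0_X,w)$ under $(\pi_{TX},T\pi_X)$, whereas $\delta(w,v)$ lies over $(v,0_X)$, and applying $\tau_X$ turns the $\pi_{TX}$-subtraction into the $T\pi_X$-subtraction, so that step needs the lemma that, for elements of $T^2X$ with equal $\pi_{TX}$- and $T\pi_X$-projections, the two differences have the same vertical component; cite it rather than leave it as ``up to the appropriate fiber identification''.
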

\begin{proof}
This follows from Proposition~\ref{prop:VecActFunc} and Proposition~\ref{prop:LeibnizRule}.
\end{proof}

\begin{Theorem}
\label{thm:CartanOnX}
Let $X$ be an object in a cartesian tangent category with scalar $R$-multiplication. Let $\calO \coloneqq \Hom(X,R)$. Then the Lie algebra of vector fields $\calX \coloneqq \Gamma(X,TX)$ and the graded commutative $\calO$-algebra of forms,
\begin{equation*}
  \Omega^\bullet(X) 
  \coloneqq 
  \Hom_{\calO}\bigl(\wedge^\bullet_{\calO} \calX, 
    \calO \bigr)
  \,,
\end{equation*}
together with the Chevalley-Eilenberg differential~\eqref{eq:differential} and the inner derivative~\eqref{eq:InnDeriv} constitute a Cartan calculus in the sense of Definition~\ref{def:CartanCalc}.
\end{Theorem}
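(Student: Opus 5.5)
The proof is a short chain of previously established results, and I would assemble it as follows. First, Theorem~\ref{thm:TangentLieAlgd} says that the tangent bundle $\pi_X: TX \to X$ is an abstract Lie algebroid. Its anchor is $\rho = \id_{TX}$, its bracket is the Lie bracket of vector fields, and its bundle of $R$-modules structure is given by $\kappa_X$. I would check that the identity is trivially a morphism of bundles of $R$-modules and that \eqref{eq:anchorLieAlgMap} holds tautologically for $\rho=\id$. The Leibniz rule \eqref{eq:LieAlgdLeibniz} for $\rho=\id$ reads $[v,fw] = f[v,w] + (v\cdot f)w$, which is exactly Proposition~\ref{prop:LeibnizRule}. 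That the bracket is a Lie bracket on the abelian group $\Gamma(X,TX)$ (biadditive, antisymmetric, Jacobi) is the result of Rosick\'y and Cockett--Cruttwell recalled in Section~\ref{sec:RosickysAxioms}.

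Second, I would apply Proposition~\ref{prop:CartanOnA} with $A = TX$. This gives $\calA = \Gamma(X,TX) = \calX$, and the action of $\calX$ on $\calO$ is $(\rho\circ v)\cdot f = v \cdot f$. Concretely, Proposition~\ref{prop:LieAlgdRinehart} makes $\calX$ a Lie--Rinehart algebra over $\calO = \Hom(X,R)$. Its ingredients are:
\begin{itemize}
\item the $\calO$-module structure \eqref{eq:fvDef};
\item the action by derivations, where additivity, the Leibniz property on products, and compatibility with brackets are \eqref{eq:VecActFuncA}--\eqref{eq:VecActFunc2} of Proposition~\ref{prop:VecActFunc};
\item the Leibniz rule of Proposition~\ref{prop:LeibnizRule}.
\end{itemize}
Proposition~\ref{prop:ACartanCalc} then yields the Cartan calculus on $\Omega^\bullet(X) = \Hom_\calO(\wedge^\bullet_\calO\calX,\calO)$ with the Chevalley--Eilenberg differential \eqref{eq:differential} and the inner derivative \eqref{eq:InnDeriv}.

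The only point needing care is the algebraic content hidden in Proposition~\ref{prop:ACartanCalc}. The tangent-categorical input is used only through the Lie--Rinehart axioms, so nothing further about $T$ is needed. Within that step, the delicate checks are that $d\omega$ is again $\calO$-multilinear and that $d^2=0$. For $\calO$-linearity, the derivation terms $(v_i\cdot f)\,\omega(\dots)$ arising from the first sum of \eqref{eq:differential} must cancel against the terms $(v_i\cdot f)v_j$ produced by the Leibniz rule in the bracket sum. For $d^2=0$, the argument uses the Jacobi identity together with \eqref{eq:VecActFunc1}. I would also verify the relation $[\Lie_v,\iota_w]=\iota_{[v,w]}$ directly from the formulas, using the same cancellation. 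The relations $[\Lie_v,d]=0$ and $[\Lie_v,\Lie_w]=\Lie_{[v,w]}$ then follow by the graded Jacobi argument given after Definition~\ref{def:CartanCalc}, or by direct computation if $\Omega$ has 2-torsion.

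One caveat concerns the wedge product. It is defined with the factor $1/(p+q)!$, so as written it requires that factorials be invertible in $\calO$. If they are not, I would instead use the unnormalized shuffle-product formula; the bracket relations are proved the same way.
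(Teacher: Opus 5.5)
Your proposal is correct and coincides with the paper's proof, which applies Proposition~\ref{prop:CartanOnA} (that is, Proposition~\ref{prop:LieAlgdRinehart} followed by Proposition~\ref{prop:ACartanCalc}) to the tangent Lie algebroid of Theorem~\ref{thm:TangentLieAlgd}, itself obtained from Propositions~\ref{prop:VecActFunc} and~\ref{prop:LeibnizRule}. Two small corrections to your side remarks: first, the factor $1/(p+q)!$ is the wrong normalization, not just an invertibility issue (with it one gets $d(f\,dg) = 2\,df\wedge dg$, and $\iota_v$ of a product of two $1$-forms is half of what the derivation rule demands), so the shuffle normalization $1/(p!\,q!)$ is needed in all cases; second, $\calO$-linearity of $d\omega$ in a slot $fv_k$ also uses $(fv)\cdot g = f\,(v\cdot g)$, which is not listed in Proposition~\ref{prop:VecActFunc} but follows from naturality of $\kappa$ together with $\Val_R\circ\kappa_R = \Rmult\circ(\id_R\times\Val_R)$.
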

\begin{proof}
The statement follows from applying Proposition~\ref{prop:CartanOnA} to the tangent Lie algebroid of $X$.
\end{proof}

We will call the Cartan calculus of Theorem~\ref{thm:CartanOnX} the Cartan calculus of \textbf{Lie-Rinehart forms}. In a closely related forthcoming paper, the second author will show that there is another Cartan calculus of cubical forms, defined as maps $T^n X \to R$ with additional properties. This second Cartan calculus is, in some sense, more closely related to the properties of the object $X$.

\section{Examples}
\label{sec:Examples}

We refer the reader to the companion paper \cite{AintablianBlohmann:2025} for some motivating examples (and non-examples) of cartesian tangent categories with scalar multiplication. Here, we will extend the list of examples and spell out their Cartan calculi.

\subsection{Manifolds}
\label{sec:Manifolds}

The most basic example is the tangent category of smooth manifolds. On an open subset $U \subset \bbR^n$, the functors that appear in Definition~\ref{def:TangentStructure} are given by
\begin{equation*}
\begin{aligned}
  T U &= U \times \bbR^n
  \\
  T^2 U &= U \times \bbR^n \times \bbR^n \times \bbR^n
  \\
  T^k U &= U \times (\bbR^n)^{2^k-1}
  \\
  T_2 U &= U \times \bbR^n \times \bbR^n
  \\
  T_k U &= U \times (\bbR^n)^k
  \,.
\end{aligned}
\end{equation*}
On a smooth map $f: U \to V \subset \bbR^m$ the functors are given by
\begin{align}
  T\! f: (x^i, x_1^i) &\longmapsto
  \Bigl( f^a(x), \frac{\partial f^a}{\partial x^i} x_1^i \Bigr)
  \label{eq:T1f}\\
  T^2\! f: (x^i, x_1^i, x_2^i, x_{12}^i) &\longmapsto
  \Bigl( f^a(x), 
  \frac{\partial f^a}{\partial x^i} x_1^i , 
  \frac{\partial f^a}{\partial x^i} x_2^i ,
  \frac{\partial f^a}{\partial x^i} x_{12}^i +
  \frac{\partial^2 f^a}{\partial x^i \partial x^j} x_1^i x_2^j 
  \Bigr)
  \label{eq:T2f}\\
  T_2 f: (x^i, x_1^i, x_2^i) &\longmapsto
  \Bigl( 
  f^a(x), 
  \frac{\partial f^a}{\partial x^i} x_1^i ,
  \frac{\partial f^a}{\partial x^i} x_2^i 
  \Bigr)
  \,. \notag
\end{align}
The formulas for $T^k$ and $T_k$ are analogous. The natural transformations of the tangent category structure are given by
\begin{align*}
  \pi_U : (x,x_1) &\longmapsto x
  \\
  0_U : x &\longmapsto (x, 0)
  \\
  +_U : (x, x_1, y_1) &\longmapsto (x, x_1 + y_1)
  \\
  \lambda_U : (x,x_1) &\longmapsto (x,0,0,x_1)
  \\
  \tau_U : (x, x_1, x_2, x_{12} ) &\longmapsto (x, x_2, x_1, x_{12} )
  \,.
\end{align*}
The commutativity of $T_2$ and $T$ is given by the isomorphism
\begin{equation*}
\begin{aligned}
  T(T_2 U) &\longrightarrow T_2(T U)
  \\
  \bigl( (x, x_1, y_1), (x_2, x_{12}, y_{12}) \bigr)
  &\longmapsto
  \bigl( ( x, x_2), (x_1, x_{12}), (y_1, y_{12}) \bigr) \,.
\end{aligned}
\end{equation*}
The bundle projection extends to $T^2$ as
\begin{align*}
  (\pi T)_U = \pi_{TU}: (x, x_1, x_2, x_{12}) &\longmapsto (x, x_1)
  \\
  (T\pi)_U = T\pi_U: (x, x_1, x_2, x_{12}) &\longmapsto (x, x_2)
  \,.
\end{align*}
The other natural transformations that appear in the definition, $+T$, $T+$, $\lambda T$, $T\lambda$, $\tau T$, and $T\tau$, are obtained in a similar way. The extension~\eqref{eq:VertLiftExt} of the vertical lift is given by
\begin{equation*}
  (\lambda_2)_U: (x, x_1, y_1) \longmapsto (x, x_1, 0,  y_1)
  \,.
\end{equation*}
In addition, we have the natural fiberwise multiplication by real numbers
\begin{equation*}
\begin{aligned}
  \kappa_U: \bbR \times T U 
  &\longrightarrow T U
  \\
  \bigl(r, (x,x_1) \bigr)
  &\longmapsto (x, r x_1)
  \,.
\end{aligned}
\end{equation*}
It can be checked by explicit elementary calculation that this constitutes a cartesian tangent structure with scalar $\bbR$-multiplication on the category of smooth finite-dimensional manifolds $\Mfld$.

Since the sheaf of vector fields on a manifold $M$ is a locally free and finitely generated $C^\infty(M)$-module, the Lie-Rinehart forms on $M$ can be identified with the de Rham forms. In this way, we recover the usual Cartan calculus on smooth manifolds.

\subsection{\texorpdfstring{$G$-manifolds}{G-manifolds}}
\label{sec:GManifolds}

Let $G$ be a fixed Lie group. A $G$-manifold is a smooth manifold $M$ together with a smooth left $G$-action $\rho: G \times M \to M$ \cite[Ch.~9]{Lee:2013}. A morphism is a smooth $G$-equivariant map. The tangent functor is given by $TM$ with the induced $G$-action $T_{(2)}\rho: G \times TM \to TM$. Fiber products of the tangent bundle are equipped with the diagonal $G$-action. Since all structure transformations $\pi$, $0$, $+$, $\tau$, $\lambda$, and $\kappa$ of the tangent structure are natural, they are $G$-equivariant. We conclude that $G$-manifolds form a cartesian tangent category with scalar $\bbR$-multiplication, in agreement with the general construction of MacAdam~\cite[Prop.~4.1]{MacAdam:2021} for categories of bundles over a fixed base.

The vector fields of this tangent category are the $G$-equivariant vector fields; the ring of scalar functions is $\Hom(M,\bbR) \cong C^\infty(M)$, and the Lie-Rinehart forms are the $G$-invariant de Rham forms $\Omega(M)^G$. In geometry, $G$-invariant vector fields and differential forms play an important role in symplectic reduction~\cite{MarsdenWeinstein:1974} and geometric invariant theory~\cite{Kirwan:1984}. If $G$ is compact and connected, a standard averaging argument shows that the inclusion $\Omega(M)^G \hookrightarrow \Omega(M)$ induces an isomorphism in de Rham cohomology (see, e.g., \cite[Ch.~9]{Lee:2013}). This is one reason why equivariant cohomology is defined differently in topology (for instance via the Borel construction).

\subsection{Lie groupoids}

Let us consider the category of Lie groupoids and smooth homomorphisms (see \cite[Ch.~5]{Mackenzie:2005} and \cite[Ch.~I]{MoerdijkMrcun:2003}). The tangent morphism of a submersion of smooth manifolds is a submersion. Moreover, the tangent functor commutes with pullbacks over submersions, so $T(G_1 \times_{G_0} G_1) \cong T G_1 \times_{T G_0} T G_1$. From these two facts it follows that the tangent structure of smooth manifolds induces a tangent structure on the category of Lie groupoids and smooth homomorphisms. The ring object of scalar multiplication is the manifold $\bbR$ viewed as a Lie groupoid with only identity arrows.

The vector fields of this tangent structure on a Lie groupoid $G$ are the \emph{multiplicative} vector fields~\cite[Def.~3.3]{Mackenzie:1998}, that is, the vector fields $v: G \to TG$ that are homomorphisms of Lie groupoids. The ring of scalar functions is the ring of smooth functions on the base $G_0$ that are constant on the orbits. The Lie-Rinehart forms are the multiplicative differential forms on $G$ in the sense of \cite[\S3]{Mackenzie:1998}. In this way we obtain the Cartan calculus of multiplicative forms and multiplicative vector fields.

The full subcategory generated by action groupoids for a fixed Lie group $G$ is equivalent to the category of $G$-manifolds with its tangent structure (Section~\ref{sec:GManifolds}).

\subsection{Log manifolds}

A log manifold (also called a $b$-manifold) is a pair $(M,Z)$ consisting of a smooth manifold $M$ and an embedded smooth codimension-$1$ submanifold $Z$ (see \cite[Ch.~I, \S10]{Melrose:1993} and \cite{GuilleminMirandaPires:2014}). More generally, one may take $Z$ to be a normal crossing divisor, that is, an immersed submanifold that is locally a union of embedded codimension-$1$ submanifolds intersecting transversely. The sheaf of vector fields tangent to $Z$, which we denote by $\bcalX(M,Z)$, is locally free and finitely generated as a $C^\infty(M)$-module, so it is the sheaf of sections of a vector bundle $\bT M \to M$, called the \textbf{log tangent bundle}. In local coordinates $(x,z_1,\dots,z_{n-1})$ adapted to $Z = \{x=0\}$, a frame is given by $x\partial_x, \partial_{z_1}, \dots, \partial_{z_{n-1}}$. For $x \neq 0$, we have
\begin{equation*}
  \frac{\partial f}{\partial(-\log x)}
  = \frac{\partial x}{\partial(-\log x)} \frac{\partial f}{\partial x}
  = x \frac{\partial f}{\partial x}
  \,.
\end{equation*}
This shows that $\bcalX(M,Z)$ is the Lie algebra of derivations of the ring of functions generated by the coordinates of $Z$ together with the logarithm of the normal coordinate.

The log tangent bundle $\bT M$ with the Lie bracket of vector fields is a Lie algebroid. A morphism $f: (M,Z)\to (M',Z')$ of log manifolds is a morphism of Lie algebroids $\bT M \to \bT M'$. Explicitly, a morphism is given by a smooth map $f: M\to M'$ such that $f^{-1}(Z') = Z$ and $f$ is transverse to $Z'$. We will denote the category of log manifolds by $\bMfld$. 

The restriction $\bT M |_Z \subset \bT M$ of the log tangent bundle to the submanifold $Z \subset M$ is an embedded submanifold of codimension $1$. Therefore, we obtain the endofunctor
\begin{align*}
  \bT: \bMfld &\longrightarrow \bMfld
  \\
  (M,Z) &\longmapsto (\bT M , \bT M|_Z)
  \,,
\end{align*}
which is the natural tangent functor on log manifolds. Since all axioms of a tangent category are local and, in adapted coordinates, $\bT M$ is obtained from $TM$ by replacing the vector field $\partial_x$ with $x\partial_x$, the structure maps satisfy the relations of a tangent category by the same coordinate computations as for ordinary manifolds in Section~\ref{sec:Manifolds}. In this way, we obtain a cartesian tangent category with scalar $\bbR$-multiplication. The vector fields of this tangent category are given by $\bcalX(M,Z)$. The Lie-Rinehart forms are the log forms in the usual sense \cite{GuilleminMirandaPires:2014}.

\subsection{Pro-manifolds}

An ind-object in a category $\calC$ is the colimit presheaf of a diagram $X: \calI \to \calC$ on a small filtered index category $\calI$. A morphism between ind-objects is a morphism of presheaves. The functor that maps a diagram to the colimit presheaf is a functor
\begin{equation*}
  \calC^\calI \longrightarrow \Ind(\calC)
\end{equation*}
which preserves finite limits and finite colimits. Moreover, the map that sends a category $\calC$ to its ind-category $\Ind(\calC)$ is a functor of 2-categories
\begin{equation*}
  \Ind: \Cat \longrightarrow \Cat
  \,.
\end{equation*}
It follows from the functoriality of $\Ind$ and from the fact that $\Ind$ preserves finite pointwise limits that if $\calC$ is a cartesian tangent category with scalar multiplication, then so is its ind-category $\Ind(\calC)$ (cf.~\cite{Vooys:2023}).

Dually, the pro-category of $\calC$ is given by $\Pro(\calC) = \Ind(\calC^\op)^\op$. Since $\Ind$ preserves finite pointwise colimits, $\Pro$ preserves finite pointwise limits. We therefore arrive at the following result.

\begin{Proposition}
\label{prop:IndProTan}
Let $\calC$ be a cartesian tangent category with scalar $R$-multiplication. The application of the 2-functor $\Ind$ to the tangent structure equips the ind-category of $\calC$ with the structure of a cartesian tangent category with scalar $R$-multiplication. The same statement holds for the pro-category of $\calC$.
\end{Proposition}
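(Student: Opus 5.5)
The plan is to transport every piece of structure along the 2-functor $\Ind$ and to check that each axiom survives. Since all axioms are equations between natural transformations or pointwise limit conditions, this should reduce to the fact that $\Ind$ preserves finite pointwise limits.

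First, I fix the data. Applying the 2-functor $\Ind$ to $T$ gives an endofunctor $\Ind(T)$ of $\Ind(\calC)$. Applying it to $\pi, 0, +, \lambda, \tau$ gives natural transformations between the corresponding composites. Here I use that $\Ind$ is strictly compatible with composition and whiskering, so that $\Ind(T^2) = \Ind(T)^2$, $\Ind(T\lambda) = \Ind(T)\Ind(\lambda)$, and so on. The ring object $R \in \calC$ embeds as a constant ind-object, and since the embedding $\calC \to \Ind(\calC)$ preserves finite products, it is again a commutative ring object. To define $\kappa$, I identify $R \times X$ for an ind-object $X = \operatorname{colim}_i X_i$ with $\operatorname{colim}_i (R \times X_i)$, using that finite products commute with filtered colimits of presheaves. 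Then I set $\kappa$ to be $\operatorname{colim}_i \kappa_{X_i}$. The partial tangent maps, $\Val_R$, and $\chi$ are defined in the same levelwise way.

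Second, I verify the axioms. Every commuting diagram in Definition~\ref{def:TangentStructure}, Proposition~\ref{prop:RModuleBundle}, and Definition~\ref{def:RScalar} is an equation between composites of natural transformations built from the structure maps and finite products. By 2-functoriality these equations are preserved, once the finite products and fibre products appearing in them are identified with their levelwise versions.

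The limit conditions need separate treatment:
\begin{itemize}
\item The fibre products $T_k$ must exist, be pointwise, and be preserved by $T$.
\item $\chi$ must be invertible, so that the structure is cartesian.
\item The vertical lift square~\eqref{eq:TanFun4} must be a pullback.
\item $R$ must be tangent-stable, that is, $TR \cong R \times R$ as $R$-modules. This is immediate, because it holds already in $\calC$ and $R$ is a constant ind-object.
\end{itemize}
For the other three conditions I use that $\calC^\calI \to \Ind(\calC)$ preserves finite limits and that filtered colimits commute with finite limits in presheaf categories. The pointwise pullbacks in $\calC$ then assemble levelwise into pullbacks of diagrams in $\calC^\calI$, and these map to pullbacks in $\Ind(\calC)$.

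The main obstacle will be that morphisms of ind-objects need not be levelwise. A pair of maps into $TX$ or a cospan is therefore not a priori represented by a single diagram over one index category, so "pointwise" has to be handled with care. I would first reindex any finite diagram of ind-objects over a common filtered category; this is the standard fact that finite diagrams in $\Ind(\calC)$ lift to $\Ind$ of diagrams. After that, the limit statements follow from the levelwise ones.

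The pro-case follows by duality. Since $\Pro(\calC) = \Ind(\calC^\op)^\op$ and $\Ind$ preserves finite colimits, $\Pro$ preserves finite limits, and the same argument applies verbatim with cofiltered limits in place of filtered colimits.
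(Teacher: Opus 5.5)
Your proposal is correct and follows the paper's route. The paper likewise invokes the 2-functoriality of $\Ind$ together with the fact that $\Ind$ preserves finite pointwise limits, and obtains the pro-case from $\Pro(\calC) = \Ind(\calC^\op)^\op$ and the preservation of finite colimits by $\Ind$; you spell out more of the details (the identification $R \times \operatorname{colim}_i X_i \cong \operatorname{colim}_i (R \times X_i)$, tangent-stability of the constant object $R$, and separate checks of the limit axioms), and the reindexing step you flag as the main obstacle is essentially vacuous here, because the cospans in the pullback axioms are all levelwise over the index category of a single ind-object $X$, while products only need the product index category.
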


Concretely, the tangent functor on a pro-manifold represented by the diagram $X: \calI \to \Mfld$ is represented by the diagram $TX: \calI \to \Mfld$, $i \mapsto TX_i$. The bundle projection is given by $(\pi_X)_i \coloneqq \pi_{X_i}: TX_i \to X_i$, the addition by $(+_X)_i: TX_i \times_{X_i} TX_i \to TX_i$, etc.

\begin{Example}
The Cartan calculus of the pro-manifold of infinite jets
\begin{equation*}
  J^0 E \leftarrow J^1 E \leftarrow J^2 E
  \leftarrow \ldots
\end{equation*}
is the usual Cartan calculus of the variational bicomplex \cite{Blohmann:2024b}.
\end{Example}

\subsection{Elastic diffeological spaces}

Diffeological spaces are concrete sheaves on the site of Euclidean spaces, consisting of open subsets of $\bbR^n$, $n \geq 0$, and smooth maps, with the usual notion of open covers \cite[Def.~3.4]{Blohmann:2024a}; see also \cite[Ch.~2]{IglesiasZemmour:2013}. A diffeological space is called \emph{elastic} if it satisfies the axioms of \cite[Def.~4.1]{Blohmann:2024a} that ensure that the left Kan extension of the tangent structure on Euclidean spaces defines a tangent structure on elastic diffeological spaces with scalar $\bbR$-multiplication. The full subcategory of elastic diffeological spaces is a cartesian tangent category with scalar $\bbR$-multiplication \cite[Thm.~4.2]{Blohmann:2024a}. There are many applications of diffeological spaces, for example to Lie theory \cite{AintablianBlohmann:2025}, classical field theory \cite{Blohmann:2024b}, and general relativity \cite{BlohmannFernandesWeinstein:2013}.

\subsection{Affine schemes}
\label{sec:AffSchemes}

In \cite[\S4.1]{CruttwellLemay:2023} it was shown that the category $\Aff_k \cong \CAlg_k^\op$ of affine schemes over a field $k$ is equipped with a cartesian tangent structure. The tangent functor is defined by
\begin{equation*}
  T\Spec(A)
  \coloneqq \Spec\bigl( \Sym_A(\Omega^1_A) \bigr)
  \,,
\end{equation*}
where $\Omega^1_A$ is the $A$-module of K\"ahler differentials of the commutative $k$-algebra $A$. We recall that $\Omega^1_A$ is the $A$-module generated by the symbols $\{da ~|~ a \in A\}$ subject to the relations $d(a + b) = da + db$ and $d(ab) = (da)b + a(db)$. The bundle projection $\pi_A: T\Spec A \to \Spec A$ is given by the unit $A \hookrightarrow \Sym_A(\Omega^1_A)$. The other structure transformations of a tangent category are constructed in \cite[Lem.~4.3]{CruttwellLemay:2023}. 

A vector field is a map of algebras $v: \Sym_A(\Omega^1_A) \to A$ that is the identity on $A \subset \Sym_A(\Omega^1_A)$, that is, $A$-linear. Using that the K\"ahler differentials represent the derivations of $A$,
\begin{equation*}
  \Der_k(A, M) \cong \Hom_A(\Omega^1_A, M)
\end{equation*}
for all $A$-modules $M$, and that $\Sym_A: \Mod_A \to \CAlg_A$ is the left adjoint of the forgetful functor $\CAlg_A \to \Mod_A$, we see that the vector fields are given by
\begin{equation*}
\begin{split}
  \calX\bigl( \Spec(A) \bigr)
  &= 
  \CAlg_A\bigl( \Sym_A(\Omega^1_A), A \bigr)
  \cong
  \Hom_A(\Omega^1_A, A)
  \cong \Der_k(A,A)
  \\
  &= \Der_k(A)
  \,,
\end{split}
\end{equation*}
the Lie algebra of derivations of $A$.

For the Lie algebroid structure of the tangent bundle and the Cartan calculus, we need a scalar multiplication. The ring of scalars is the affine line $\bbA_k = \Spec(k[x])$. The scalar multiplication $\bbA_k \times T\Spec(A)  \to T\Spec(A)$ is given by the $A$-linear $k[x]$-coaction
\begin{align*}
  \kappa_A: \Sym_A(\Omega^1_A) 
  &\longrightarrow k[x] \otimes \Sym_A(\Omega^1_A)
  \\
  a 
  &\longmapsto 1 \otimes a 
  \\
  da 
  &\longmapsto x \otimes da
  \,,
\end{align*}
which defines a scalar $\bbA_k$-multiplication in the sense of Definition~\ref{def:RScalar}. The ring of $\bbA_k$-valued functions on $\Spec(A)$ is the ring itself,
\begin{equation*}
\begin{split}
  \Hom\bigl( \Spec(A), \bbA_k \bigr) 
  &= \CAlg_k\bigl( k[x], A \bigr)
  \\
  &\cong A
  \,.    
\end{split}
\end{equation*}
We conclude that the Lie-Rinehart algebra of Proposition~\ref{prop:LieAlgdRinehart} is $(A,\Der_k(A))$.

\begin{Proposition}
The Cartan calculus (Theorem~\ref{thm:CartanOnX}) of an affine scheme $\Spec(A)$ with $A$ smooth over $k$ is the usual Cartan calculus, given in terms of the Lie algebra of derivations and the $k$-algebra of K\"ahler forms on $A$.
\end{Proposition}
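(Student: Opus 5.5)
The plan is to use Theorem~\ref{thm:CartanOnX}, which already gives a Cartan calculus on $\Spec(A)$ with $\calO = \Hom(\Spec A, \bbA_k) \cong A$ and $\calX = \Der_k(A)$. What remains is to identify the Lie-Rinehart forms $\Hom_A(\wedge^\bullet_A \Der_k(A), A)$, with their Chevalley-Eilenberg differential and inner derivative, with the K\"ahler forms $\Omega^\bullet_A = \wedge^\bullet_A \Omega^1_A$ carrying the usual de Rham differential and contraction.

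First we check that the action~\eqref{eq:VecFieldFunc} of a vector field on a function is the derivation itself. A function $f \in A$ is the algebra map $k[x] \to A$, $x \mapsto f$. Then $Tf$ sends $x \mapsto f$ and $dx \mapsto df$. The map $\Val_{\bbA_k}$ corresponds to the coordinate $dx$ on $T\bbA_k = \Spec k[x,dx]$. Hence $v \cdot f = v(df) = D_v(f)$, where $D_v$ is the derivation attached to $v$ under $\Hom_A(\Omega^1_A,A) \cong \Der_k(A)$. The bracket of vector fields agrees with the commutator of derivations by Proposition~\ref{prop:VecActFunc}, provided vector fields act faithfully. They do here, since a vector field is determined by its values on the generators $da$.

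Next we use smoothness: $\Omega^1_A$ is a finitely generated projective $A$-module. Therefore $\Der_k(A) = (\Omega^1_A)^\vee$, and the canonical map $\Omega^1_A \to (\Omega^1_A)^{\vee\vee}$ is an isomorphism. Exterior powers commute with duals of finitely generated projectives, which gives $A$-linear isomorphisms
\begin{equation*}
  \wedge^n_A \Omega^1_A \xrightarrow{~\cong~} \Hom_A\bigl(\wedge^n_A \Der_k(A), A\bigr)
  \,,\qquad
  a_0\, da_1 \wedge \cdots \wedge da_n \longmapsto \Bigl( (v_1,\dots,v_n) \mapsto a_0 \det\bigl(v_i \cdot a_j\bigr) \Bigr)
  \,.
\end{equation*}
Projectivity can be checked locally, where $\Omega^1_A$ is free, so the isomorphism follows from the free case.

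Then we verify that this isomorphism intertwines all the operations. For the wedge product, the normalization $1/(p+q)!$ in the paper's definition forces a compatible normalization of the identification. We assume $k$ is a field in which these factorials are invertible, or else adjust the convention. Contraction with $\iota_v$ matches insertion into the first slot by construction. For the differential, both $d$'s are derivations of degree $+1$ squaring to zero, and $\Omega^\bullet_A$ is generated as an algebra by $A$ and $dA$. So it suffices to check two things on generators. First, $(d_{CE} f)(v) = v \cdot f = (df)(v)$. Second, $d_{CE}(d_{CE} f) = 0$, which holds because $d_{CE}^2 = 0$. The Lie derivative is $[\iota_v, d]$ on both sides, so it matches automatically.

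The main obstacle is the duality step. We must know that $\Omega^1_A$ is finitely generated projective for $A$ smooth over $k$, which is standard for smooth finite-type algebras. We must also justify the exterior-power/dual interchange, where characteristic issues interact with the antisymmetrization convention. I would settle this by passing to a Zariski cover trivializing $\Omega^1_A$ and checking the isomorphism in an explicit basis $dx_1,\dots,dx_n$ with dual derivations $\partial_i$.
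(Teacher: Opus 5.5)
Your route is the paper's. Its sketch consists only of the duality step: $\Omega^1_A$ is finitely generated projective, hence reflexive, so $\Hom_A(\wedge^n_A\Der_k(A),A)\cong\Omega^n_A$. You add the justification of the exterior-power/dual interchange, the identification $v\cdot f=D_v(f)$, the bracket as commutator, and the comparison of operations, and all of that is sound. You should also include the one-line check that $fv=\kappa_X\circ(f,v)$ corresponds to $fD_v$ (from $da\mapsto x\otimes da$), so that $\calX\cong\Der_k(A)$ as $A$-modules.

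The one step that fails as you describe it is the normalization. Take a rescaling $\phi_n=c_n\,\phi_n^{\det}$ of the determinant pairing, with $c_0=1$. Multiplicativity for the paper's factor $\frac{1}{(p+q)!}$ forces $c_{p+q}\,(p+q)!=c_p\,p!\,c_q\,q!$. Intertwining $d$ (or $\iota_v$) forces $c_{n+1}=c_n$. These are incompatible already at $n=2$, so no rescaling of the identification can work.

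The underlying reason is that with this factor the Chevalley--Eilenberg differential is not a graded derivation. For $f\in\calO$ and a $1$-form $\beta$,
\begin{equation*}
  d(f\beta)(v,w)=(v\cdot f)\beta(w)-(w\cdot f)\beta(v)+f\,d\beta(v,w)
  \,,\qquad
  (df\wedge\beta)(v,w)=\tfrac12\bigl((v\cdot f)\beta(w)-(w\cdot f)\beta(v)\bigr)
  \,,
\end{equation*}
so $d(f\beta)=2\,df\wedge\beta+f\,d\beta$. Your generator argument for $d$ relies on $d_{CE}$ being a derivation, so it breaks down here.

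The fix is to change the convention rather than the identification. Use $\frac{1}{p!\,q!}$, i.e.\ the signed sum over $(p,q)$-shuffles, which involves no division. With that convention, the unrescaled pairing $a_0\,da_1\wedge\cdots\wedge da_n\mapsto a_0\det(v_i\cdot a_j)$ has the properties you need:
\begin{itemize}
\item[(i)] it is multiplicative, by the generalized Laplace expansion;
\item[(ii)] it intertwines contraction, by expansion along the first row;
\item[(iii)] it intertwines $d$, by your generator argument.
\end{itemize}
This works in every characteristic, so you can drop the assumption that factorials are invertible.
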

\begin{proof}[Sketch of proof]
When $A$ is smooth, the sheaf $\Omega_A^1$ of K\"ahler differentials is locally a finitely generated projective $A$-module. In particular, it is reflexive, which implies that
\begin{equation*}
\begin{split}
  \Hom_A\bigl( \Der_k(A), A \bigr) 
  \cong \Hom_A\bigl( \Hom_A(\Omega^1_A, A), A)
  \cong \Omega^1_A
  \,.    
\end{split}
\end{equation*}
This generalizes to Lie-Rinehart forms of degree $n > 1$, 
\begin{equation*}
\begin{split}
  \Hom_A\bigl( \wedge^n_A \Der_k(A), A \bigr) 
  \cong \Omega^n_A
  \,,
\end{split}
\end{equation*}
which shows that they are the usual de Rham forms on $\Spec(A)$.
\end{proof}

\subsection{Schemes}
\label{sec:Schemes}

Let $X$ be a $k$-scheme. Choosing a Zariski open cover $\{U_i\}$ of $X$ by smooth affines $U_i = \Spec(A_i)$, Section~\ref{sec:AffSchemes} provides a Cartan calculus on every $U_i$. On overlaps $U_i \cap U_j$ these Cartan calculi are compatible with restriction, hence they glue to a Cartan calculus on $X$ at the level of sheaves. The tangent sheaf $\calT_{X/k} \coloneqq \Der_k(\calO_X)$ is the sheaf of $k$-derivations of the structure sheaf $\calO_X$ \cite[Ch.~II, \S8]{Hartshorne:1977}. 

When $X$ is smooth, the sheaf of Lie-Rinehart forms can be identified with the sheaf of de Rham forms
\begin{equation*}
\begin{split}
  \Hom_{\calO_X}\bigl( \wedge^n_{\calO_X} \Der_k(\calO_X), \calO_X \bigr) 
  \cong \Omega^n_{X/k}
  \,.    
\end{split}
\end{equation*}
For smooth schemes, the induced differential, contractions, and Lie derivatives on $\Omega^\bullet_{X/k}$ recover the standard Cartan calculus on de Rham forms. The algebraic de Rham cohomology is the hypercohomology of the sheaf,
\begin{equation*}
  H_{\mathrm{dR}}(X/k)
  \cong
  \mathbb{H}\bigl(X, \Omega_{X/k} \bigr)
  \,,
\end{equation*}
which is the hypercohomology of the de Rham complex of sheaves \cite[Ch.~III, Ex.~7.7]{Hartshorne:1977}. Cruttwell and Lemay~\cite[Thm.~4.28]{CruttwellLemay:2023} show that differential bundles over a scheme correspond to quasi-coherent sheaves; our Lie-Rinehart forms provide the Cartan calculus built from the tangent sheaf $\calT_{X/k}$ rather than from differential bundles.

\subsection{Graded manifolds}
\label{sec:GradedManifolds}

Let $G$ be an abelian group, typically $\bbZ_2$, $\bbZ$ or products thereof. Let $V = \{V_g ~|~ g \in G\}$ be a $G$-graded $\bbR$-vector space of total dimension $\sum_{g \in G} \dim V_g < \infty$. A $G$-graded manifold modeled on $V$ is a pair $\calM = (M, \calO_\calM)$ consisting of a manifold $M$ and a sheaf $\calO_\calM$ of graded commutative $\bbR$-algebras that is locally of the form
\begin{equation*}
  \calO_\calM(U)
  \cong C^\infty(U) \otimes \SymGr(V^*)
\end{equation*}
on the open subsets $U \subset M$ of a cover, where $\SymGr(V^*)$ is the free graded commutative $\bbR$-algebra generated by $V^*$ \cite{Roytenberg:2002,Fairon:2017}. A morphism $\calM = (M, \calO_\calM) \to (N, \calO_\calN) = \calN$ of graded manifolds is a morphism of ringed spaces, that is, a smooth map of manifolds $\phi: M \to N$ together with a morphism of sheaves of graded algebras $\phi^\sharp: \calO_\calN \to \phi_* \calO_\calM$. It is implied that the zero component of $\phi^\sharp$ is the pullback of smooth functions on $N$ by $\phi$.

The tangent functor on graded manifolds is defined by
\begin{equation*}
  T(M, \calO_\calM) = (TM, \calO_{T\calM})
  \,,
\end{equation*}
where $TM$ is the tangent manifold and the structure sheaf is given by
\begin{equation*}
  \calO_{T\calM}(TU)
  = 
  C^\infty(TU) \otimes \SymGr_{\SymGr(V^*)}\bigl(\Omega^1_{\SymGr(V^*)} \bigr)
  \,.
\end{equation*}
The difference to affine schemes is that the degree zero subalgebra is given by the algebra of smooth functions rather than the free algebra generated by the K\"ahler differentials,
\begin{equation*}
  (\calO_{T\calM})_0 
  = C^\infty(TU) 
  \supsetneq \SymGr_{C^\infty(U)}\bigl(\Omega^1_{C^\infty(U)} \bigr)
  \,.
\end{equation*}

Since $\SymGr(V^*)$ is freely generated by $V^*$, the $\SymGr(V^*)$-module of K\"ahler differentials is freely generated by $dV^* \coloneqq \Span\{d\theta~|~ \theta \in V^*\}$, so we obtain
\begin{equation*}
\begin{split}
  \SymGr_{\SymGr(V^*)}\bigl(\Omega^1_{\SymGr(V^*)} \bigr)
  &\cong
  \SymGr(V^*) \otimes \SymGr(dV^*)
  \cong \SymGr\bigl( (V\oplus V)^* \bigr)
  \\
  &\cong \SymGr\bigl( (TV)^* \bigr)
  \,,
\end{split}
\end{equation*}
where $TV = V \oplus V$ is the tangent functor of graded vector spaces. A tangent morphism is given by the pair $T(\phi,\phi^\sharp) = (T\phi, T\phi^\sharp)$ where $T\phi: TM \to TN$ is the tangent map of manifolds and
\begin{align*}
  T\phi^\sharp: \calO_{T\calM}(TU)
  &\longrightarrow \calO_{T\calN}\bigl((T\phi)^{-1}(TU)\bigr)
  \\
  f \otimes da
  &\longmapsto 
  (f\circ T\phi) \otimes d \bigl( \phi^\sharp a \bigr)
\end{align*}
for all $f \in C^\infty(TU)$ and all generators $da \in \Omega^1_{\SymGr(V^*)}$ of $\SymGr_{\SymGr(V^*)}\bigl(\Omega^1_{\SymGr(V^*)} \bigr)$.

The natural transformations of the tangent structure are obtained as pairs of the natural transformations of manifolds and the graded version of those for affine schemes. For example, the bundle projection is given on the structure algebras by
\begin{align*}
  \pi^\sharp_\calM:
  \calO_{\calM}(U)
  &\longrightarrow
  \calO_{T\calM}\bigl( \pi_M^{-1}(U) \bigr)
  \\
  f \otimes a
  &\longmapsto
  \pi_M^* f \otimes a
  \,,
\end{align*}
where $a \in \SymGr(V^*)$. The other natural transformations are similarly straightforward graded generalizations of those for affine schemes.

The ring object of the scalar multiplication is $\bbR$ viewed as graded manifold with structure sheaf $\calO_\bbR(I) = C^\infty(I)$ for any open interval $I \subset \bbR$. The product of graded manifolds is defined by $\calM \times \calN = (M \times N, \calO_\calM \boxtimes \calO_\calN)$, so
\begin{equation*}
  \calO_{\bbR \times T\calM}(I \times TU)
  = C^\infty(I \times TU) \otimes 
  \SymGr_{\SymGr(V^*)}\bigl(\Omega^1_{\SymGr(V^*)} \bigr)
  \,.
\end{equation*}
Let $x \coloneqq \id_\bbR : \bbR \to \bbR$ denote the tautological chart of $\bbR$ and $\pr_1^*$ its pullback to $\bbR \times TU$ by the projection to $\bbR$. The scalar multiplication is given on the structure sheaf by
\begin{align*}
  \kappa^\sharp_\calM: 
  \calO_{T\calM}(TU) 
  &\longrightarrow
  \calO_{\bbR \times T\calM}
  \bigl(\kappa_M^{-1}(I \times TU) \bigr)
  \\
  f \otimes 1 
  &\longmapsto (\kappa_M^* f) \otimes 1 
  \\
  1 \otimes a 
  &\longmapsto 1 \otimes a 
  \\
  1 \otimes da 
  &\longmapsto \pr_1^* x \otimes da
  \,,
\end{align*}
for all $f \in C^\infty(TU)$ and all generators $da \in \Omega^1_{\SymGr(V^*)}$ of $\SymGr_{\SymGr(V^*)}\bigl(\Omega^1_{\SymGr(V^*)} \bigr)$.

The sheaf of vector fields on $\calM$ is the sheaf of degree 0 derivations of the structure algebra. On a chart $U \subset M$, where we have $TU \cong U \times \bbR^n$, it is given by
\begin{equation*}
\begin{split}
  \calX_\calM(U)
  &\cong \Der\bigl( \calO_\calM(U) \bigr)
  \\
  &\cong \Der\bigl( C^\infty(U) \otimes \SymGr(V^*) \bigr)
  \\
  &\cong 
    \bigl\{ \Der\bigl( C^\infty(U) \bigr) 
      \otimes \SymGr(V^*) \bigr\}
    \oplus \bigl\{ C^\infty(U) 
    \otimes \Der\bigl(\SymGr(V^*) \bigr) \bigr\}
  \\
  &\cong 
    \bigl\{ \Gamma(U,U \times \bbR^n) \otimes \SymGr(V^*) \bigr\}
    \oplus \bigl\{ C^\infty(U) 
    \otimes \Hom_\bbR\bigl(V^*, \SymGr(V^*) \bigr) \bigr\}
  \\
  &\cong 
    \bigl\{C^\infty(U) \otimes \bbR^n \otimes \SymGr(V^*) \bigr\}
    \oplus \bigl\{ C^\infty(U) 
    \otimes V \otimes \SymGr(V^*) \bigr\}
  \\
  &\cong \calO_\calM(U) \otimes (\bbR^n \oplus V)
  \,,
\end{split}
\end{equation*}
where we have used that the total dimension of $V$ is finite, so that $\Hom_\bbR(V^*,W) \cong V \otimes W$. We see that the sheaf of vector fields is locally free over the structure sheaf.

The global vector fields are the derivations of the algebra $\calO_\calM(M)$. This recovers the usual concept of degree 0 vector fields on a graded manifold. The Lie-Rinehart forms are locally given by the sheaf
\begin{equation}
\label{eq:OmegaGradMan}
  \Omega^k_\calM(U)
  = \bigoplus_{p+q=k}\Omega^p(U) \otimes \wedge^q V
  \,,
\end{equation}
where $\wedge^q V \cong \SymGr(V[1])[-q]$ is the graded antisymmetric algebra generated by $V$. The global forms are $\Omega_\calM(M)$, where $k$ in \eqref{eq:OmegaGradMan} is the \emph{form degree}. The sheaf $\Omega_\calM$ of differential forms on $\calM$ is isomorphic to the structure ring of the $1$-shifted tangent bundle
\begin{equation*}
  \Omega_\calM \cong \calO_{T[1]\calM}
  \,.
\end{equation*}
This is the standard identification between differential forms and functions on the shifted tangent bundle.

\begin{Remark}
The vector fields of the tangent structure on graded manifolds are all of degree $0$. To obtain graded vector fields, one has to enrich the category over graded vector spaces; see \cite{Roytenberg:2002} for the $\bbZ_2$-graded case. The notion of a tangent structure on an enriched category seems rather straightforward to define, but to the best of our knowledge this has not been studied in the literature, so it would be a natural direction for future research.
\end{Remark}

\subsection{\texorpdfstring{Affine $C^\infty$-schemes}{Affine C-infinity-schemes}}

Let $\Eucl$ denote the category with open subsets of all $\bbR^n$, $n \geq 0$ as objects and smooth maps as morphisms. By definition, a $C^\infty$-ring is a product-preserving functor $\Eucl \to \Set$ \cite[Def.~2.4]{Joyce:2019}; see also \cite{Dubuc:1981,MoerdijkReyes:1991}. Explicitly, a $C^\infty$-ring $A$ is a set with an $n$-ary operation $A^n \to A$ for every smooth map $f: \bbR^n \to \bbR$ and all $n \geq 0$, satisfying operadic associativity. Since the morphisms in $\Eucl$ include the commutative ring structure of $\bbR$, every $C^\infty$-ring is a commutative ring. We denote the coproduct of $C^\infty$-rings $A$ and $B$ by $A \otimes_\infty B \equiv A \sqcup B$. 

\begin{Example}
Let $C^\infty(M)$ be the set of smooth functions on a manifold $M$. Any smooth map $f: \bbR^n \to \bbR$ maps an $n$-tuple $(a_1, \ldots, a_n) \in C^\infty(M)^n$ to the smooth function $f(a_1, \ldots, a_n) \coloneqq f \circ (a_1, \ldots, a_n)$. This equips $C^\infty(M)$ with the structure of a $C^\infty$-ring. The coproduct of two such rings is given by
\begin{equation*}
  C^\infty(M) \otimes_\infty C^\infty(N) 
  \cong C^\infty(M \times N)
  \,.
\end{equation*}
\end{Example}

Let us denote the category of $C^\infty$-rings by $\CinftyRing$. 
The opposite category $\CinftyRing^\op$ is the category of affine $C^\infty$-schemes \cite[\S4]{Joyce:2019}. The $C^\infty$-scheme given by the $C^\infty$-ring $A$ is denoted $\Spec^\infty(A)$. The tangent structure on $\CinftyRing^\op$ is the $C^\infty$-analog of that for affine schemes (Section~\ref{sec:AffSchemes}). Let $A$ be a $C^\infty$-ring. The $A$-module of \textbf{$C^\infty$-differentials} $\Omega^{\infty,1}_A$ is the cotangent module of $A$ in the sense of \cite[Def.~5.3]{Joyce:2019}: the free $A$-module generated by the symbols $da$ for all $a \in A$ modulo the relations imposed by the chain rule for $C^\infty$-derivations,
\begin{equation*}
  d\bigl(f(a_1, \ldots, a_n) \bigr)
  = \sum_{i=1}^n 
  \frac{\partial f}{\partial x^i} (a_1, \ldots, a_n) \cdot da_i
  \,,
\end{equation*}
for all smooth $f: \bbR^n \to \bbR$ and $a_i \in A$. Let $\Sym_A^\infty\bigl(\Omega^{\infty,1}_A)$ denote the free $C^\infty$-ring over $A$, generated by the $A$-module of $C^\infty$-differentials. The tangent functor is given by
\begin{equation*}
  T \Spec^\infty(A) 
  \coloneqq \Spec^\infty \bigl( 
     \Sym_A^\infty(\Omega^{\infty,1}_A) \bigr)
  \,.
\end{equation*}
The natural transformations of the tangent structure are the $C^\infty$-versions of those for ordinary affine schemes.

The ring object in $\CinftyRing^\op$ of scalar multiplication is $R \coloneqq \Spec^\infty\bigl( C^\infty(\bbR) \bigr)$. The coring structure on $C^\infty(\bbR)$ is given by the pullback of the ring structure on $\bbR$. For example, the comultiplication is given by the morphism of $C^\infty$-rings
\begin{align*}
  C^\infty(\bbR)
  &\longrightarrow
  C^\infty(\bbR) \otimes_\infty C^\infty(\bbR)
  \cong C^\infty(\bbR \times \bbR)
  \\
  f &\longmapsto 
  \bigl( (x,y) \mapsto f(xy) \bigr)
  \,,
\end{align*}
etc. The scalar multiplication is given by
\begin{align*}
  \kappa_A: \Sym^\infty_A\bigl(\Omega^{\infty,1}_A\bigr)
  &\longrightarrow
  C^\infty(\bbR) \otimes_\infty \Sym^\infty_A\bigl(\Omega^{\infty,1}_A\bigr)
  \\
  a
  &\longmapsto 1 \otimes_\infty a
  \\
  da
  &\longmapsto \id_\bbR \otimes_\infty da
  \,,
\end{align*}
in analogy to the case of ordinary affine schemes. As a $C^\infty$-ring, $C^\infty(\bbR)$ is freely generated by $\id_\bbR$, so that
\begin{equation*}
\begin{split}
  \calO(A)
  &\coloneqq
  \Hom\bigl(\Spec^\infty(A),R\bigr)
  =
  \CinftyRing\bigl( C^\infty(\bbR), A \bigr)
  \cong
  \Hom_\Set( \id_\bbR, A)
  \\
  &\cong A
  \,.    
\end{split}
\end{equation*}

A \textbf{$C^\infty$-derivation} is a map $v: A \to A$ that satisfies
\begin{equation*}
  v\bigl(f(a_1, \ldots, a_n) \bigr)
  = \sum_{i=1}^n 
  \frac{\partial f}{\partial x^i} (a_1, \ldots, a_n) \cdot v(a_i)
  \,.
\end{equation*}
We denote the $C^\infty$-derivations by $\Der^\infty(A)$. It can be shown that
\begin{equation*}
  \Der^\infty(A) \cong \Hom_A(\Omega^{\infty,1}_A, A)
  \,,
\end{equation*}
in complete analogy to ordinary schemes. It follows that the vector fields are the derivations $\calX(A) \cong \Der^\infty(A)$.

When $A = C^\infty(M)$ for a manifold $M$, the Lie-Rinehart forms are the $C^\infty$-de Rham forms on $\Spec(A)$ \cite[Ex.~5.4]{Joyce:2019}, and we recover the known Cartan calculus on smooth manifolds viewed as affine $C^\infty$-schemes. In general, however, the Lie-Rinehart forms of Theorem~\ref{thm:CartanOnX} are quite different from the $C^\infty$-cotangent complex: Joyce's cotangent module $\Omega_A$ is defined by the universal property for $C^\infty$-derivations \cite[Def.~5.3]{Joyce:2019}, whereas our Lie-Rinehart forms are Chevalley-Eilenberg forms built from the Lie algebra of $C^\infty$-derivations.

\subsection{Differentiable stacks}
\label{sec:DifferentiableStacks}

The category of differentiable stacks \cite{BehrendXu:2011} is a $2$-category, so, strictly speaking, it is outside the scope of this paper, but within the scope of the framework of higher tangent categories developed by Kristine Bauer, Matthew Burke, and Michael Ching \cite{BauerBurkeChing:2021}. We would still like to indicate how, conjecturally, differentiable stacks are equipped with the structure of a $(2,1)$-tangent category.

The category of differentiable stacks is equivalent to the $(2,1)$-category of Lie groupoids, right principal groupoid bibundles, and biequivariant diffeomorphisms \cite{Blohmann:2008}. In this category, the usual methods of differentiation on smooth manifolds are at our disposal. By applying the tangent functor to a Lie groupoid $G$, we obtain the tangent groupoid $TG$. By applying the tangent functor to a $G$-$H$ bibundle $P$, we obtain a $TG$-$TH$ bibundle $TP$. The bibundle $P$ is $H$-principal if and only if the endpoint map of the $H$-action, $P \times_{H_0} H_0 \to P \times_{G_0} P$ is an isomorphism. As is the case for any functor, $T$ preserves isomorphisms, so if $P$ is right principal, then $TP$ is right principal. Similarly, the natural transformations of the tangent structure on manifolds induce natural transformations on Lie groupoids and bibundles. We posit that this equips the category of differentiable stacks with a higher tangent structure. Verifying this rigorously is a task for future research.

\bibliographystyle{alpha}
\bibliography{Cartan}

\end{document}